\documentclass[11pt,reqno]{amsart}

\usepackage{lineno,hyperref}

\modulolinenumbers[5]

\usepackage{amsmath}
\usepackage{amssymb}
\usepackage{mathrsfs}
\usepackage{amsthm}
\usepackage{bm}
\usepackage{graphicx}
\usepackage{booktabs}
\usepackage{float}
\usepackage{subfig}
\usepackage{geometry}
\geometry{left=2.5cm,right=2.5cm,top=3cm,bottom=3cm}
\usepackage{color}

\newtheorem{Def}{Definition}[section]
\newtheorem{lem}[Def]{Lemma}
\newtheorem{theo}[Def]{Theorem}
\newtheorem{pro}[Def]{Proposition}
\newtheorem{rem}[Def]{Remark}
\newtheorem{ex}[Def]{Example}
\newtheorem{assum}{Assumption}
\newtheorem{cor}[Def]{Corollary}

\newcommand{\xde}{Schr\"odinger~}
\newcommand{\tr}{\text{tr}}
\newcommand{\mcal}{\mathcal}

\newcommand{\mbb}{\mathbb}
\newcommand{\mbf}{\mathbf}

\newcommand{\ud}{\mathrm d}

\numberwithin{equation}{section}
\allowdisplaybreaks[4]

\begin{document}

\title[LDP]{Large deviations principles for symplectic discretizations of stochastic linear Schr\"odinger Equation}

\author{Chuchu Chen}
\address{Academy of Mathematics and Systems Science, Chinese Academy of Sciences, Beijing
	100190, China; School of Mathematical Sciences, University of Chinese Academy of
	Sciences, Beijing 100049, China}
\email{chenchuchu@lsec.cc.ac.cn}
\author{Jialin Hong}
\address{Academy of Mathematics and Systems Science, Chinese Academy of Sciences, Beijing
	100190, China; School of Mathematical Sciences, University of Chinese Academy of
	Sciences, Beijing 100049, China}
\email{hjl@lsec.cc.ac.cn}

\author{Diancong Jin}
\address{Academy of Mathematics and Systems Science, Chinese Academy of Sciences, Beijing
	100190, China; School of Mathematical Sciences, University of Chinese Academy of
	Sciences, Beijing 100049, China}
\email{diancongjin@lsec.cc.ac.cn (Corresponding author)}

\author{Liying Sun}
\address{Academy of Mathematics and Systems Science, Chinese Academy of Sciences, Beijing
	100190, China; School of Mathematical Sciences, University of Chinese Academy of
	Sciences, Beijing 100049, China}
\email{liyingsun@lsec.cc.ac.cn}

\thanks{This work is supported by National Natural Science Foundation of China (Nos. 91630312, 11971470, 11871068,  11926417, 11711530017).}

\keywords{
large deviations principle; symplectic discretizations; stochastic \xde equation; rate function; exponential tightness.
}

\begin{abstract}
In this paper, we consider the large deviations principles (LDPs) for the stochastic linear Schr\"odinger equation and its symplectic discretizations.
These numerical discretizations are the spatial semi-discretization based on spectral Galerkin method, and the further   full discretizations with symplectic schemes in temporal direction.
First, by means of the abstract G\"artner--Ellis theorem, we prove that the observable $B_T=\frac{u(T)}{T}$, $T>0$ of the exact solution $u$ is exponentially tight and satisfies an LDP on $L^2(0, \pi; \mathbb C)$.
Then, we present the LDPs for both
$\{B^M_T\}_{T>0}$ of the spatial discretization $\{u^M\}_{M\in\mbb N}$  and $\{B^M_N\}_{N\in \mbb N}$ of the full discretization $\{u^M_N\}_{M,N\in\mbb N}$, where $B^M_T=\frac{u^M(T)}{T}$ and $B^M_N=\frac{u^M_N}{N\tau}$ are the discrete approximations of $B_T$. 
Further, we show that both the semi-discretization $\{u^M\}_{M\in \mathbb N}$ and the full discretization $\{u^M_N\}_{M,N\in \mathbb N}$  based on temporal symplectic schemes  can weakly asymptotically preserve the LDP of $\{B_T\}_{T>0}$. 
These results show the ability of symplectic discretizations to preserve the LDP of the stochastic linear \xde equation, and first provide an effective approach to approximating the LDP rate function in infinite dimensional space based on the numerical discretizations.

\end{abstract}

\maketitle

\section{Introduction} \label{Sec1}

The stochastic Schr\"odinger equation, as an important stochastic Hamiltonian partial differential equation, is widely used to model the propagation of dispersive waves in inhomogeneous or random media (see e.g., \cite{Xde94}), and possesses the infinite dimensional stochastic symplectic geometric structure. 
To numerically inherit the geometric structure of the stochastic Schr\"odinger equation, \cite{ChenH} proposes the infinite dimensional stochastic symplectic algorithms and considers the semi-discretizations, such as the stochastic symplectic Runge--Kutta methods.
Moreover, the full discretizations based on the stochastic symplectic methods in temporal direction are also proposed  (see e.g., \cite{ChenH,CuiHLZ17,CuiHLZ19,HongW,JiangWH} and  references therein).
The numerical experiments show that the stochastic symplectic discretizations are more stable in the long-time simulation than the non-symplectic ones. 
In this paper, we aim to deepen  the understanding of the long-time asymptotical behavior and  probabilistic characteristics of stochastic symplectic methods from the perspective of LDP. More precisely, we study the LDPs for both the stochastic linear Schr\"odinger equation and its numerical discretizations, and  investigate the ability of symplectic discretizations to asymptotically preserve the LDP of the original system.

	
The theory of large deviations  has been applied to many other branches of sciences, for example statistical physics, finance, engineering information theory  (\cite{LDPapp09,LDP08}). It is concerned with the exponential decay of probabilities of very rare events, where the decay rate  is characterized by the LDP rate function. In some cases, LDP rate functions  describe steady rate and fluctuations of physical quantities, such as the entropy or free energy of statistical systems (see e.g., \cite{LDPapp18}). 

In this paper, we consider the following stochastic linear \xde equation
\begin{align} \label{xde1}
\ud u&=\bm{i}\Delta u\ud t+\bm{i}\alpha \ud W(t),\qquad t>0,\\
u(0)&=u_0\in H^1_0(0,\pi),\nonumber
\end{align}
where $\alpha>0$, $\Delta$ is the Laplace operator with the Dirichlet boundary condition, and $W$ is an $L^2(0,\pi;\mathbb{R})$-valued $Q$-Wiener process defined on a complete filtered probability space $\left(\Omega,\mathscr{F},\{\mathscr{F}_t\}_{t\geq0},\mbf P\right)$ with $\{\mathscr{F}_t\}_{t\geq0}$ satisfying the usual conditions.
The mass $\|u\|_{H^0}^2=\int_{0}^{\pi}|u(x)|^2\,\ud x$ of \eqref{xde1} is an  important physical quantity with $H^0:=L^2(0, \pi; \mathbb C)$, which is conservative if $\alpha=0$. However, in the stochastic setting, it grows linearly in the mean sense, i.e., $\mbf E\|u(T)\|^2_{H^0}=\mbf E\|u_0\|^2_{H^0}+\alpha^2T\tr(Q)$. Markov's inequality yields that the quantity $\|B_T\|_{H^0}$ tends to zero in probability, where $B_T:=\frac{u(T)}{T}$. In order to characterize the speed of convergence or give an exponential tail estimate, we investigate the LDP of $\{B_T\}_{T>0}$ on $H^0$.
Our idea is to use the abstract G\"artner--Ellis theorem, which involves the existence of the logarithmic moment generating function and exponential tightness. 
The Gaussian property of the exact solution on $H^0$ with the real inner product is analyzed to give the logarithmic moment generating function of $\{B_T\}_{T>0}$. A prerequisite of the exponential tightness is to find the compact subset of $H^0$, under the non-compactness of the \xde group, such that the probabilities  of  $\{B_T\}_{T>0}$ escaping from the compact subset is exponentially small. This 
relies on two skills: One is that the regularity of $u$ on $H^1$ gives a series of compact sets in $H^0$, and the other is that the Fernique theorem yields the estimate of probability that $B_T$ hits these compact sets on an exponential  scale.
Utilizing the property of reproducing kernel Hilbert space, we obtain the explicit expression of the large deviations rate function $I$ of $\{B_T\}_{T>0}$.

The large deviations rate functions characterize the essential decay rate of the probability of rare events. It is important for a  numerical discretization to preserve the rate function in certain sense.
Thus, for a numerical discretization of \eqref{xde1}, it is natural to ask:
\begin{itemize}
\item[(P1)] Does the discrete approximation of $\{B_T\}_{T>0}$, associated with the numerical discretization of \eqref{xde1}, satisfy the LDP?
\item[(P2)] If so, which kind of numerical discretizations can  preserve the LDP of the original system, namely preserve the LDP rate function, exactly or asymptotically?
\end{itemize}  
This paper aims to deal with the above problems.
We are faced with two major difficulties in the numerical analysis. One is  how to define the preservation for the LDP of an infinite dimensional stochastic  differential equation  by its  numerical discretizations. Unlike the LDP of the original system in infinite dimensional spaces,  the space concerning the LDP of a numerical discretization is finite dimensional. Therefore one needs a  reasonable definition  to link these two spaces.
Another difficulty arises from the symplectic discretizations of the stochastic  \xde equation, including the general formulation in high dimensional case and the combination with the theory of large deviations. 

Concerning these issues,  we  first apply the  spectral Galerkin method to \eqref{xde1} and get the spatial semi-discretization (see \eqref{Galerkin}) \begin{align}\label{Galerkin1}
\ud u^M(t)&=\bm{i}\Delta_M u^M(t)\ud t+\bm{i}\alpha P_M\ud W(t),\qquad t>0, \\
u^M(0)&=P_Mu_0\in H_M.\nonumber
\end{align}
Here  $H_M=\text{span}\left\{e_1,e_2,\ldots,e_M\right\}$, where $e_k, k=1,2,\ldots$ are  the eigenfunctions of $Q$ and form an orthonormal basis of $H^0$. In fact,  \eqref{Galerkin1} is a symplectic discretization and can be rewritten into a stochastic Hamiltonian system (see \eqref{SHS}):
\begin{align}\label{SHS1}
\ud P^M(t)&=\mcal MQ^M(t)\ud t,\nonumber\\
\ud Q^M(t)&=-\mcal MP^M(t)\ud t+\alpha\mcal Q\ud \beta(t),
\end{align}
where $u^M=P^M+\bm{i}Q^M$.
We define by $B^M_T=\frac{u^M(T)}{T}$, $T>0$ a discrete approximation of the observable $B_T$ for \eqref{Galerkin1}.
Following the arguments of dealing with the LDP for $\{B_T\}_{T>0}$, we prove that for each $M\in\mbb N$, $\{B^M_T\}_{T>0}$ obeys an LDP on $H_M$ with the good rate function $\widetilde{I}^M$.
Note that $\widetilde{I}^M$ and $I$ have different domains, which brings the difficulty to define  and study the preservation of the LDP for  $\{B_T\}_{T>0}$ by $\{u^M\}_{M\in\mbb N}$. 
A possibility is to transfer the LDP of $\{B^M_T\}_{T>0}$  on $H_M$ to $H^0$. This can be solved by means of Lemma \ref{subLDP}  which reveals the relationship between LDPs of a stochastic process on some space  and that on subspaces. This is to say,  $\{B^M_T\}_{T>0}$ also satisfies the LDP on $H^0$ with a rate function $I^M$. However, we also note that the valid domain, on which $I^M$ takes finite values, is a proper subset of the valid domain of $I$. Hence, we introduce the definition of \emph{weakly asymptotical preservation for LDP} (see Definition \ref{Def4.2}) in the sense that $I$ is well approximated by $I^M$ for some sufficiently large $M$. Further, we prove that $\{u^M\}_{M\in\mbb N}$  weakly asymptotically preserves the LDP of $\{B_T\}_{T>0}$ based on the strong continuity of $\{P_M\}_{M\in\mbb N}$.

Next, we attempt to show that the full discretization based on a large class of temporal symplectic discretization can weakly asymptotically preserve the LDP of $\{B_T\}_{T>0}$. In order to give the general formula of symplectic discretizations for the high dimensional system \eqref{SHS1}, an argument of dimensionality reduction is applied. More precisely, we  divide \eqref{SHS1} into  $M$ subsystems (see \eqref{subSHS}). Then we obtain a class of full discretizations $\{u^M_n\}_{M,n\in\mbb N}$
based on the temporal symplectic discretizations of \eqref{SHS1} by combining the symplectic discretizations in \cite{LDPosc} for  every $2$-dimensional subsystem.  For this full discretization,  we define a discrete approximation $B^M_N=\frac{u^M_N}{N\tau}$ of $B_T$, with $\tau$ being the temporal stepsize, and give the LDP of $\{B^M_N\}_{N\in\mbb N}$ based on the G\"artner--Ellis theorem and the contraction theorem. 
Further, we study whether $\{u^M_n\}_{M,n\in\mbb N}$ can weakly asymptotically preserve the LDP (see Definition \ref{sec5def5.4}) of $\{B_T\}_{T>0}$, which depends on the asymptotical behavior of the modified rate function $I_{mod}^{M,\tau}$ of $\{B^M_N\}_{N\in\mbb N}$. Notice that $I^M$ is a good approximation of $I$, it suffices to prove that for each $M\in\mbb N$, $\{u^M_n\}_{n\in\mbb N}$ can asymptotically preserve the LDP of $\{B^M_T\}_{T>0}$, i.e., the modified rate function $I_{mod}^{M,\tau}$ converges to $I^M$ pointwise as $\tau$ tends to zero.
Similar to \cite{LDPosc}, 
under certain convergence condition of numerical approximations,  we obtain $\lim_{\tau\to 0}I_{mod}^{M,\tau}(\cdot)=I^M(\cdot)$.  Combining  the asymptotical convergence of $I^M$ to $I$, we deduce our main conclusion that the full discretization $\{u^M_n\}_{M,n\in\mbb N}$, based on the  the spatial spectral Galerkin approximation and temporal symplectic discretizations, can weakly asymptotically preserve the LDP of $\{B_T\}_{T>0}$. That is to say, we  obtain a good approximation of the LDP rate function  of $\{B_T\}_{T>0}$  based on the symplectic discretizations.  To the best of our knowledge, this is the first result of approximating the LDP rate function in infinite dimensional space based on the numerical discretizations. We partially answer the open problem proposed by \cite{LDPosc}.


The paper is organized as follows. In Section \ref{Sec2}, some useful notations and preliminaries are introduced. 
In Section \ref{Sec3}, we give an introduction on the LDP  in general topological vector spaces, and  prove that $\{B_T\}_{T>0}$ satisfies an LDP on $H^0$. The weakly asymptotical preservations of LDP for $\{B_T\}_{T>0}$ by the spectral Galerkin approximation and the further full discretizations based on the temporal symplectic discretizations are given in Sections \ref{Sec4} and \ref{Sec5}, respectively.  Section \ref{Sec6} generalizes the LDP of $\{B_T\}_{T>0}$ to the case of complex-valued noises.  Future work is discussed  in Section \ref{Sec7}.

\section{Preliminaries}  \label{Sec2}
We begin with  some notations. Throughout this paper, denote by $H^s=H^s(0,\pi)$ and $H^s(0,\pi;\mathbb{R})$, the classical Sobolev space of complex-valued functions and  the classical Sobolev space of real-valued functions, respectively. In particular, denote $H^0=L^2(0,\pi;\mathbb{C})$, $H^1_0(0,\pi)=\{f\in H^1(0,\pi)\left|f(0)=f(\pi)=0\right.\}$, $U^0=L^2(0,\pi;\mathbb{R})$ and $U^1=H^1(0,\pi;\mathbb{R})$. For a linear operator $A$ from some Hilbert space onto itself, let $\lambda_k(A)$ be the $k$th eigenvalue of  $A$. For a complex number $z$, let $\Re z$ and  $\Im z$ be its real part and imaginary part, respectively. And denote by $\bm{i}$ the imaginary unit. Let $\left(U,\|\cdot\|_U,\langle\cdot,\cdot\rangle_U\right)$ and $\left(H,\|\cdot\|_H,\langle\cdot,\cdot\rangle_H\right)$ be two separable Hilbert spaces. Then $\mathcal L_2(U,H)$ denotes the Banach spaces consisting of all the Hilbert--Schmidt operators from  $U$ to $H$, with the norm $\left\|A\right\|_{\mcal L_2(U,H)}=\left(\sum_{k=1}^{\infty}\left\|Af_k\right\|_H^2\right)^{\frac{1}{2}}$, where $\{f_k\}_{k\in\mbb N}$ is any  orthonormal basis of $U$. 
Denote the real inner product by $\left\langle f,g\right\rangle_\mathbb{R}=\Re\int_{0}^{\pi}f(x)\bar{g}(x)dx$, and the complex inner product by $\left\langle f,g\right\rangle_\mathbb{C}=\int_{0}^{\pi}f(x)\bar{g}(x)dx$ for $f$, $g\in H^0$.

For a given  $M\in\mbb N$, $\mbb C^M$ denotes the space of $M$-dimensional complex-valued vectors. Define the inner product on $\mbb C^M$ by $\langle u,v\rangle_{\mbb R}=\sum\limits_{k=1}^{M}\left(\Re u_k\Re v_k+\Im u_k\Im
v_k\right)$, and the norm by $\|u\|=\sqrt{\langle u,u\rangle_{\mbb R}}$ for any $u=\left(u_1,u_2,\ldots,u_M\right)$, $v=\left(v_1,v_2,\ldots,v_M\right)\in\mbb C^M$.  $R=\mcal{O}(h^p)$ stands for $\left|R\right|\leq Ch^p$, for all sufficiently small $h>0$. $f(h)\sim h^p$ means that $f(h)$ and $h^p$ are equivalent infinitesimal. For the random variables  $X,Y$, $\mbf{Var}(X)$ denotes the covariance operator of $X$ and $\mbf{Cor}(X,Y)$ denotes the correlation operator of $X$ and $Y$. 

In order to investigate the stochastic \xde equation \eqref{xde1}, we introduce the definition and properties of the noise.
Let $e_k(x)=\sqrt{\frac{2}{\pi}}\sin(kx)$, then $\{e_k\}_{k\in\mathbb{N}}$ forms an orthonormal basis of both $(H^0,\langle\cdot,\cdot\rangle_{\mbb C})$ and $(U^0,\langle\cdot,\cdot\rangle_{\mbb R})$. 
Assume that $Q$ is a nonnegative symmetric operator on $U^0$ with $Qe_k=\eta_k e_k$ for some  non-increasing sequence $\{\eta_k\}_{k\in\mbb N}$. Then $W$ has the  expansion $W(t)=\sum_{k\geq1}\sqrt{\eta_k}\beta_k(t)e_k$. $Q$ can  be extended to $H^0$ by defining $Qf=Q(\Re f)+\bm{i}Q(\Im f)$ for every $f\in H^0$ and the extended operator is still denoted by $Q$, if no confusion occurs. Noting that $\Delta e_k=-k^2e_k$, $k=1,2\ldots$,  we have that $\Delta Q=Q\Delta$.

Let $S(t)=e^{\bm{i}t\Delta}$ be the  unitary $C_0$-group generated by $A$. Throughout the paper, we assume that $Q^{\frac{1}{2}}\in \mathcal{L}_2(U^0,U^1)$ and $u_0\in H^1_0(0,\pi)$, then \eqref{xde1} admits a unique mild solution in $H^1_0(0,\pi)$ (see e.g., \cite{Cohen}):
\begin{align}\label{mild}
u(t)=S(t)u_0+\bm{i}\alpha\int_{0}^{t}S(t-s)\ud W(s).
\end{align} 
Next, we give some results about the property of the distribution of exact solution  \eqref{mild}. These results are based on the following proposition.
\begin{pro}\cite[Proposition 4.28]{Prato1}\label{Cor}
	Let $W$ be a $U$-valued $Q$-Wiener process and $\mcal N^2_{W}(0,T;L^2_0)$ denote the set
	\begin{align*}
	\left\{\left.\Phi:\left[0,T\right]\times \Omega\to\mcal L_2(Q^\frac{1}{2}(U),H)\right|\Phi ~\text{is predicable and} \mbf~ \mbf E\int_{0}^{T}\left\|\Phi(s)\circ Q^{\frac{1}{2}}\right\|^2_{\mcal L_2(U,H)}d s<\infty\right\},
	\end{align*}
	where $H$ is a separable Hilbert space.	
	Assume that $\Phi_1,\Phi_2\in\mcal N^2_W(0,T;L^2_0)$, then the correlation operators
	\begin{align*}
	V(t,s)=\mbf{Cor}(\Phi_1\cdot W(t),\Phi_2\cdot W(s)),\qquad t,s\in[0,T]
	\end{align*}
	are given by the formula
	\begin{align*}
	V(t,s)=\mbf E\int_{0}^{t\wedge s}\Phi_2(r) Q(\Phi_1(r))^*dr.
	\end{align*}
	Here, the operator $V(t,s)$ is defined by
	$$\left\langle V(t,s)a,b\right\rangle_H=\mbf E\left\langle \Phi_1\cdot W(t),a\right\rangle_H\left\langle \Phi_2\cdot W(s),b\right\rangle_H,\qquad a,b\in H.$$
\end{pro}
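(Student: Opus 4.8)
\noindent The plan is to reduce the statement, by testing against arbitrary $a,b\in H$, to the It\^o isometry for real-valued stochastic integrals against a $Q$-Wiener process, which I treat as known (it is the defining property of the integral $\Phi\cdot W$). First I would dispose of unequal times by the martingale property: assuming without loss of generality $t\le s$, write $\Phi_2\cdot W(s)=\Phi_2\cdot W(t)+\big(\Phi_2\cdot W(s)-\Phi_2\cdot W(t)\big)$. Since $\langle\Phi_1\cdot W(t),a\rangle_H$ is $\mscr{F}_t$-measurable while the increment $\Phi_2\cdot W(s)-\Phi_2\cdot W(t)=\int_t^s\Phi_2\,\ud W$ has conditional mean zero given $\mscr{F}_t$, the cross term drops out and one is left with $\mbf E\langle\Phi_1\cdot W(t),a\rangle_H\langle\Phi_2\cdot W(t),b\rangle_H$, which is the claimed right-hand side because $t\wedge s=t$. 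Hence it suffices to treat $t=s$.

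For $t=s$, fix $a,b\in H$ and set $g_1(r):=\Phi_1(r)^*a$ and $g_2(r):=\Phi_2(r)^*b$. These are predictable $U$-valued processes with $\mbf E\int_0^T\|Q^{\frac{1}{2}}g_i(r)\|_U^2\,\ud r\le\|a\|_H^2\,\mbf E\int_0^T\|\Phi_i(r)\circ Q^{\frac{1}{2}}\|_{\mcal L_2(U,H)}^2\,\ud r<\infty$, so the scalar stochastic integrals $\int_0^t\langle g_i(r),\ud W(r)\rangle_U$ are well defined and equal $\langle\Phi_1\cdot W(t),a\rangle_H$ and $\langle\Phi_2\cdot W(t),b\rangle_H$, respectively. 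Polarizing the It\^o isometry $\mbf E\big|\int_0^t\langle g,\ud W\rangle_U\big|^2=\mbf E\int_0^t\langle Qg(r),g(r)\rangle_U\,\ud r$ on $g_1$, $g_2$ and $g_1+g_2$ gives
\begin{align*}
\mbf E\,\langle\Phi_1\cdot W(t),a\rangle_H\langle\Phi_2\cdot W(t),b\rangle_H
&=\mbf E\int_0^t\langle Q\Phi_1(r)^*a,\Phi_2(r)^*b\rangle_U\,\ud r\\
&=\mbf E\int_0^t\langle\Phi_2(r)Q\Phi_1(r)^*a,b\rangle_H\,\ud r,
\end{align*}
using the symmetry of $Q$ and reading $\Phi_2Q\Phi_1^*$ as the trace-class operator $(\Phi_2Q^{\frac{1}{2}})(\Phi_1Q^{\frac{1}{2}})^*$. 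As $a,b$ are arbitrary this identifies $V(t,t)=\mbf E\int_0^t\Phi_2(r)Q\Phi_1(r)^*\,\ud r$, and together with the first step the formula follows for all $t,s\in[0,T]$.

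If one prefers not to take the scalar isometry as a black box, the alternative is to verify the identity directly for elementary integrands $\Phi_i(r)=\sum_j\Phi_i^j\mbf{1}_{(t_j,t_{j+1}]}(r)$ (on a common partition, with $\Phi_i^j$ bounded and $\mscr{F}_{t_j}$-measurable), where $\Phi_i\cdot W(t)=\sum_j\Phi_i^j\big(W(t_{j+1}\wedge t)-W(t_j\wedge t)\big)$: conditioning on $\mscr{F}_{t_{j\vee k}}$ kills the off-diagonal ($j\ne k$) contributions by independence of $W$ over disjoint intervals, and the diagonal terms, after conditioning on $\mscr{F}_{t_j}$ and using $\mbf E[\langle x,W(\sigma)-W(\rho)\rangle_U\langle y,W(\sigma)-W(\rho)\rangle_U]=(\sigma-\rho)\langle Qx,y\rangle_U$ (only the overlap of the two truncated increments surviving), collapse to $\sum_j\big((t_{j+1}\wedge t\wedge s)-(t_j\wedge t\wedge s)\big)\,\mbf E\langle\Phi_2^jQ(\Phi_1^j)^*a,b\rangle_H$. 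One then passes to general $\Phi_i\in\mcal N^2_W(0,T;L^2_0)$ by density of elementary processes: the It\^o isometry gives $\Phi_i^{(n)}\cdot W(t)\to\Phi_i\cdot W(t)$ in $L^2(\Omega;H)$, so the left-hand side converges, while writing $\Phi_2^{(n)}Q(\Phi_1^{(n)})^*=(\Phi_2^{(n)}Q^{\frac{1}{2}})(\Phi_1^{(n)}Q^{\frac{1}{2}})^*$ and applying Cauchy--Schwarz to the Hilbert--Schmidt norms gives convergence of the right-hand side in $L^1(\Omega\times[0,T])$ with values in the trace-class operators on $H$. The one point that deserves genuine care in either route is this final limit passage---verifying continuity of both sides in the topology of $\mcal N^2_W(0,T;L^2_0)$ and tracking the truncation to $[0,t\wedge s]$---but there is no serious obstacle; the assertion is simply the bilinear (polarization) refinement of the It\^o isometry.
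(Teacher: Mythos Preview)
The paper does not prove this proposition at all: it is quoted verbatim from \cite[Proposition~4.28]{Prato1} and used as a black box. So there is no ``paper's own proof'' to compare against, and your argument stands on its own merits; both the martingale reduction to $t=s$ and the polarized It\^o isometry are correct, as is the alternative elementary-process verification you sketch.

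It is worth noting, though, that the paper \emph{does} prove a close cousin of this result, Proposition~\ref{sec6pro1} in Section~\ref{Sec6}, which computes $\mbf{Cor}(\Phi_1\cdot W_1(t),\Phi_2\cdot W_2(s))$ for two correlated $Q_i$-Wiener processes. There the authors follow precisely your second route: they establish the identity for elementary integrands by conditioning on $\mcal F_{t_k}$ to kill off-diagonal terms and using the increment covariance on the diagonal, then invoke density to pass to general $\Phi_i$. Your primary approach via the scalar It\^o isometry and polarization is slightly slicker, since it packages the elementary-process computation into a single known identity rather than redoing it; the paper's explicit elementary computation is perhaps more transparent when the two driving processes differ (so that a direct appeal to a single isometry is less immediate). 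Either way, your proof is sound.
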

It follows from  \eqref{mild} that
\begin{align*}
u(t)&=S(t)u_0+\bm{i}\alpha\int_{0}^{t}\left(\cos((t-s)\Delta))+\bm{i}\sin((t-s)\Delta)\right)d W(s)\nonumber\\
&=S(t)u_0-\alpha\int_{0}^{t}\sin((t-s)\Delta)d W(s)+\bm{i}\alpha\int_{0}^{t}\cos((t-s)\Delta)d W(s)\nonumber\\
&=:S(t)u_0-\alpha W_{\sin}(t)+\bm{i}\alpha W_{\cos}(t).
\end{align*}
Noting that $\left\langle f,g\right\rangle_\mathbb{R}=\left\langle \Re f,\Re g\right\rangle_\mathbb{R}+\left\langle \Im f,\Im g\right\rangle_\mathbb{R}$, we have that for each $h=\Re h+\bm{i}\Im h\in H^0$,
\begin{align}\label{ut,h}
\left\langle u(t),h\right\rangle_\mathbb{R}=\left\langle S(t)u_0,h\right\rangle_\mathbb{R}-\alpha\left\langle W_{\sin}(t),\Re h\right\rangle_\mathbb{R}+\alpha\left\langle W_{\cos}(t),\Im h\right\rangle_\mathbb{R}.
\end{align}
Hence, 
\begin{align}\label{Eut,h}
\mbf E\left\langle u(t),h\right\rangle_\mathbb{R}=\left\langle S(t)u_0,h\right\rangle_\mathbb{R}.
\end{align}
It follows from Proposition \ref{Cor} that
\begin{gather}
W_{\sin}(t)\sim\mcal N\left(0,\int_{0}^{t}\sin^2((t-s)\Delta)Qds\right),\qquad W_{\cos}(t)\sim\mcal N\left(0,\int_{0}^{t}\cos^2((t-s)\Delta)Qds\right), \label{Wsinlaw}\\
\mbf{Cor}\left(W_{\sin}(t),W_{\cos}(t)\right)=\int_0^t\sin((t-s)\Delta)\cos((t-s)\Delta)Qds. \nonumber
\end{gather}
Using the above formulas and $\Delta Q=Q\Delta$, one has                 
\begin{align}\label{varut,h}
\mbf{Var}\left\langle u(t),h\right\rangle_\mathbb{R}=&\alpha^2\left\langle \int_{0}^{t}\sin^2((t-s)\Delta)Qds\Re h,\Re h\right\rangle_\mathbb{R}+\alpha^2\left\langle \int_{0}^{t}\cos^2((t-s)\Delta)Qds\Im h,\Im h\right\rangle_\mathbb{R}\nonumber\\
&-2\alpha^2\left\langle \int_{0}^{t}\sin((t-s)\Delta)\cos((t-s)\Delta)Qds\Re h,\Im h\right\rangle_\mathbb{R}.
\end{align}
Since $\Delta$ is invertible, we have
\begin{gather}
\int_{0}^{t}\sin^2((t-s)\Delta)ds=\frac{1}{2}\int_{0}^{t}\left(I-\cos(2(t-s)\Delta)\right)ds=\frac{tI}{2}-\frac{\Delta^{-1}}{4}\sin(2t\Delta),\label{sinint}\\
\int_{0}^{t}\cos^2((t-s)\Delta)ds=\frac{1}{2}\int_{0}^{t}\left(I+\cos(2(t-s)\Delta)\right)ds=\frac{tI}{2}+\frac{\Delta^{-1}}{4}\sin(2t\Delta),\label{csoint}\\
\int_{0}^{t}\sin(2(t-s)\Delta)ds=\frac{\Delta^{-1}}{2}\left[I-\cos(2t\Delta)\right] \label{sincosint}.
\end{gather}
Combining \eqref{varut,h}, \eqref{sinint}, \eqref{csoint} and \eqref{sincosint} leads to
\begin{align}\label{varut,h1}
\mbf{Var}\left\langle u(t),h\right\rangle_\mathbb{R}=&\frac{t\alpha^2}{2}\left(\left\langle Q\Re h,\Re h\right\rangle_\mathbb{R}+\left\langle Q\Im h,\Im h\right\rangle_\mathbb{R}\right)-\frac{\alpha^2}{4}\left[\left\langle \Delta^{-1}\sin(2t\Delta)Q\Re h,\Re h\right\rangle_\mathbb{R}\right.\nonumber\\
&\left.-\left\langle \Delta^{-1}\sin(2t\Delta)Q\Im h,\Im h\right\rangle_\mathbb{R}\right]-\frac{\alpha^2}{2}\left\langle\Delta^{-1} \left(I-\cos(2t\Delta)\right)Q\Re h,\Im h\right\rangle_\mathbb{R}.
\end{align}

\section{LDP for $B_T$ of stochastic linear \xde equation }\label{Sec3}
In this section, we study the LDP for $\{B_T\}_{T>0}$ by means of the abstract G\"artner--Ellis theorem. As a corollary, we give the exponential tail estimate of the mass of \eqref{xde1}. Throughout this section, let $\mathcal{X}$ be a locally convex Hausdorff topological vector space and $\mathcal{X}^*$ be its dual space. 
\subsection{Introduction on LDP}
In this part, we recall some  concepts upon LDP and useful theorems and lemmas in studying the LDP of a family of probability measures.
First we introduce the definitions of rate function and LDP (see e.g., \cite{LDPosc}).
\begin{Def}\label{ratefun}
	A real-valued function $I:\mcal X\rightarrow[0,\infty]$ is called a rate function, if it is lower semicontinuous, i.e., for each $a\in[0,\infty)$, the level set $I^{-1}([-\infty,a])$ is a closed subset of $\mcal X$. If all level sets $I^{-1}([-\infty,a])$, $a\in[0,\infty)$, are compact, then $I$ is called a good rate function.
\end{Def}

\begin{Def}\label{LDPdef}
	Let $I$ be a rate function and $\{\mu_\epsilon\}_{\epsilon>0}$ be a family of probability measures on $\mcal X$. We say that $\{\mu_\epsilon\}_{\epsilon>0}$ satisfies an LDP on $\mcal X$ with the rate function $I$ if
	\begin{flalign}
	(\rm{LDP 1})\qquad \qquad&\liminf_{\epsilon\to 0}\epsilon\ln(\mu_\epsilon(U))\geq-\inf I(U)\qquad\text{for every open}~ U\subset \mcal X,\nonumber&\\
	(\rm{LDP 2})\qquad\qquad &\limsup_{\epsilon\to 0}\epsilon\ln(\mu_\epsilon(C))\leq-\inf I(C)\qquad\text{for every closed}~ C\subset \mcal X.&\nonumber
	\end{flalign}
\end{Def} 
Analogously, we say that a family of random variables  $\{Z_{\epsilon}\}_{\epsilon>0}$ valued on $\mcal X$ satisfies an LDP with the rate function $I$ if its distribution satisfies the lower bound LDP (LDP1) and upper bound LDP (LDP2) in Definition \ref{LDPdef} for the rate function $I$.

Generally speaking, we need to investigate the logarithmic moment generating function and the exponential tightness of $\{\mu_{\epsilon}\}_{\epsilon>0}$, when we derive the LDP of $\{\mu_{\epsilon}\}_{\epsilon>0}$. Especially, if the state space $\mcal X$ is finite dimensional,  the existence of logarithmic moment generating function implies the exponential tightness.  However, when $\mcal X$ is infinite dimensional, the exponential tightness of $\{\mu_{\epsilon}\}_{\epsilon>0}$ can not be ignored. 

\begin{Def}\cite[Page 8]{Dembo} \label{exptightdef}
	 A family of probability measures $\{\mu_{\epsilon}\}$ on $\mcal X$ is exponentially tight if for every $\alpha<\infty$, there exists a compact set $K_\alpha\subset\mcal X$ such that 
	\begin{align}\label{exptight}
		\limsup_{\epsilon\to 0}\epsilon\ln\mu_{\epsilon}(K_\alpha^c)<-\alpha.
	\end{align}
\end{Def}
\begin{theo}\cite[Corollary 4.6.14]{Dembo}\label{GE}
Let $\{\mu_\epsilon\}_{\epsilon>0}$ be an exponentially tight family of Borel probability measures on  $\mcal X$. Suppose the logarithmic moment generating function $\Lambda(\cdot)=\lim_{\epsilon\to 0}\epsilon\Lambda_{\mu_{\epsilon}}(\cdot/\epsilon)$ is finite valued and Gateaux differentiable, where $\Lambda_{\mu_{\epsilon}}(\lambda):=\ln\int_{\mcal X}e^{\lambda(x)}\mu_{\epsilon}(dx)$, $\lambda\in \mcal X^*$. Then  $\{\mu_\epsilon\}_{\epsilon>0}$ satisfies the LDP in $\mcal X$ with the convex, good rate function $\Lambda^*(x)=\underset{\lambda\in\mcal X^*}{\sup}\{\lambda(x)-\Lambda(\lambda)\}$.
\end{theo}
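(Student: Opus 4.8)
The plan is to follow the classical route to the abstract G\"artner--Ellis (Baldi-type) theorem; the statement is exactly \cite[Corollary 4.6.14]{Dembo}, so strictly one need only cite it, but let me record the structure of the argument since it underlies all of Section \ref{Sec3}. The proof has three ingredients: an exponential upper bound over compact sets, its promotion to arbitrary closed sets via exponential tightness, and the matching lower bound over open sets obtained by an exponential change of measure.

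For the compact upper bound the elementary input is Markov's inequality: for every $\lambda\in\mcal X^*$ and $c\in\mbb R$,
\[
\mu_\epsilon\bigl(\{x\in\mcal X:\lambda(x)\ge c\}\bigr)\le e^{-c/\epsilon}\int_{\mcal X}e^{\lambda(x)/\epsilon}\,\mu_\epsilon(dx)=\exp\Bigl(-\tfrac{1}{\epsilon}\bigl(c-\epsilon\Lambda_{\mu_\epsilon}(\lambda/\epsilon)\bigr)\Bigr),
\]
so that $\limsup_{\epsilon\to 0}\epsilon\ln\mu_\epsilon(\{\lambda\ge c\})\le-(c-\Lambda(\lambda))$. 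Given a compact $K$ and $\delta>0$, for each $x\in K$ I would pick $\lambda_x\in\mcal X^*$ with $\lambda_x(x)-\Lambda(\lambda_x)\ge\min\{\Lambda^*(x),1/\delta\}-\delta$, which is possible since $\Lambda^*=\sup_{\lambda\in\mcal X^*}(\lambda(\cdot)-\Lambda(\lambda))$; the open half-spaces $\{\lambda_x(\cdot)>\lambda_x(x)-\delta\}$ cover $K$, and a finite subcover together with $\limsup_{\epsilon\to 0}\epsilon\ln(\sum_i a_i^\epsilon)=\max_i\limsup_{\epsilon\to 0}\epsilon\ln a_i^\epsilon$ gives (LDP2) restricted to $K$. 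To reach a general closed $C$, fix $\alpha<\infty$ and take the compact $K_\alpha$ of Definition \ref{exptightdef}; then $\mu_\epsilon(C)\le\mu_\epsilon(C\cap K_\alpha)+\mu_\epsilon(K_\alpha^c)$, and combining the compact bound with \eqref{exptight} yields $\limsup_{\epsilon\to 0}\epsilon\ln\mu_\epsilon(C)\le\max\{-\inf_C\Lambda^*,-\alpha\}$; letting $\alpha\uparrow\infty$ proves (LDP2). Exponential tightness simultaneously forces the level sets of $\Lambda^*$ to be compact, so $\Lambda^*$ is a \emph{good} rate function, and convexity is automatic from its definition.

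For the lower bound, fix an open $U$ and $y\in U$ with $\Lambda^*(y)<\infty$; it suffices to show $\liminf_{\epsilon\to 0}\epsilon\ln\mu_\epsilon(U)\ge-\Lambda^*(y)$. If $y$ is an \emph{exposed point} of $\Lambda^*$ with exposing hyperplane $\lambda\in\mcal X^*$ (so $z\mapsto\lambda(z)-\Lambda^*(z)$ is maximized only at $z=y$, which together with convex duality forces $\Lambda^*(y)=\lambda(y)-\Lambda(\lambda)$), I would introduce the tilted measures $\tilde\mu_\epsilon(dx):=e^{\lambda(x)/\epsilon}\mu_\epsilon(dx)\big/\int_{\mcal X}e^{\lambda(z)/\epsilon}\mu_\epsilon(dz)$. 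The upper bound just proved, applied after a Varadhan-type truncation (legitimate because $\Lambda$ is finite on all of $\mcal X^*$), shows $\limsup_{\epsilon\to 0}\epsilon\ln\tilde\mu_\epsilon(V^c)<0$ for every neighborhood $V$ of $y$, i.e. $\{\tilde\mu_\epsilon\}$ concentrates exponentially at $y$. Then for any neighborhood $V\subset U$ of $y$,
\[
\mu_\epsilon(U)\ge\mu_\epsilon(V)=\Bigl(\int_{\mcal X}e^{\lambda(z)/\epsilon}\mu_\epsilon(dz)\Bigr)\int_Ve^{-\lambda(x)/\epsilon}\,\tilde\mu_\epsilon(dx)\ge e^{-\sup_{x\in V}\lambda(x)/\epsilon}\,e^{\Lambda_{\mu_\epsilon}(\lambda/\epsilon)}\,\tilde\mu_\epsilon(V),
\]
and taking $\epsilon\ln$, then shrinking $V\downarrow\{y\}$ and using continuity of $\lambda$, gives $\liminf_{\epsilon\to 0}\epsilon\ln\mu_\epsilon(U)\ge\Lambda(\lambda)-\lambda(y)=-\Lambda^*(y)$.

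The genuine obstacle is that a general $y$ in the effective domain of $\Lambda^*$ need not be exposed, so the tilting argument only delivers $\liminf_{\epsilon\to 0}\epsilon\ln\mu_\epsilon(U)\ge-\inf_{U\cap\mcal F}\Lambda^*$, where $\mcal F$ is the set of exposed points whose hyperplane lies in $\mcal X^*$. Upgrading this to $-\inf_U\Lambda^*$ is precisely where finiteness and Gateaux differentiability of $\Lambda$ on all of $\mcal X^*$ enter: by convex analysis one shows that for every $\lambda\in\mcal X^*$ the Gateaux gradient $\nabla\Lambda(\lambda)$ is an exposed point with exposing hyperplane $\lambda$ and $\Lambda^*(\nabla\Lambda(\lambda))=\lambda(\nabla\Lambda(\lambda))-\Lambda(\lambda)$ (exponential tightness ensuring $\nabla\Lambda(\lambda)\in\mcal X$), and that these points are dense enough in the domain that $\inf_{U\cap\mcal F}\Lambda^*=\inf_U\Lambda^*$ for every open $U$. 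I expect this density/approximation statement to be the delicate point of the whole argument; in the present paper it is simply bypassed by invoking \cite[Corollary 4.6.14]{Dembo}.
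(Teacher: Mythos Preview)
Your sketch is a correct outline of the standard proof of the abstract G\"artner--Ellis theorem, and you correctly identify the delicate point (density of exposed points via Gateaux differentiability). However, there is nothing to compare against: the paper does not prove Theorem \ref{GE} at all --- it is stated purely as a citation of \cite[Corollary 4.6.14]{Dembo} and used as a black box in the subsequent sections. So your proposal goes well beyond what the paper does; the paper's ``proof'' is simply the reference.
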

Theorem \ref{GE} can be viewed as the abstract G\"artner--Ellis theorem. The following two lemmas are  useful to derive new LDPs based on a given LDP. The first lemma is also called the contraction principle, which produces a new LDP on another space based on the known LDP via a continuous mapping. The second one  gives the relationship between the LDP of $\{\mu_{\epsilon}\}_{\epsilon>0}$ on $\mcal X$ and that on the subspaces of $\mcal X$.

\begin{lem}\cite[Theorem 4.2.1]{Dembo}\label{contraction}
	Let $\mcal Y$	be another Hausdorff topological space,  $f:\mcal X\to\mcal Y$ be a continuous function, and $I: \mcal X\to [0,\infty]$ be a good rate function.
	\begin{itemize}
		\item[(a)] For each $y\in\mcal Y$, define
		\begin{align*}
		\tilde{I}(y)\triangleq\inf\left\{I(x):~x\in\mcal X,\quad y=f(x)\right\}.
		\end{align*}
		Then $\tilde{I}(y)$ is a good rate function on $\mcal Y$, where as usual the infimum over the empty set is taken as $\infty$.
		\item[(b)] If $I$ controls the LDP associated with a family of probability measures $\{\mu_{\epsilon}\}$ on $\mcal X$, then $\tilde{I}(y)$ controls the LDP associated with  the family of probability measures $\left\{\mu_{\epsilon}\circ f^{-1}\right\}$ on $\mcal Y$.
	\end{itemize}
\end{lem}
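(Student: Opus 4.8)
The plan is to reduce both parts to the defining properties of a good rate function together with the elementary fact that, for the continuous map $f$, preimages of open (resp.\ closed) sets are open (resp.\ closed); no probabilistic input beyond the LDP hypothesis for $\{\mu_{\epsilon}\}$ on $\mcal X$ is needed.

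\emph{Part (a).} The key point is the set identity $\tilde{I}^{-1}([0,a]) = f\!\left(I^{-1}([0,a])\right)$ for every $a\in[0,\infty)$. The inclusion ``$\supseteq$'' is immediate from the definition of $\tilde{I}$: if $I(x)\le a$ then $\tilde{I}(f(x))\le I(x)\le a$. For ``$\subseteq$'', suppose $\tilde{I}(y)\le a$; by definition of the infimum there is a net $(x_\gamma)$ in $\mcal X$ with $f(x_\gamma)=y$ and $I(x_\gamma)\to\tilde{I}(y)$, so eventually $x_\gamma\in I^{-1}([0,a+1])$, which is compact because $I$ is a \emph{good} rate function. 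Passing to a convergent subnet $x_\gamma\to x$, continuity of $f$ and uniqueness of limits in the Hausdorff space $\mcal Y$ give $f(x)=y$, while lower semicontinuity of $I$ gives $I(x)\le\liminf I(x_\gamma)=\tilde{I}(y)\le a$; hence $y=f(x)\in f(I^{-1}([0,a]))$. Since the continuous image of a compact set is compact, $\tilde{I}^{-1}([0,a])$ is compact, hence closed in the Hausdorff space $\mcal Y$; therefore $\tilde{I}$ is lower semicontinuous and, being nonnegative with compact level sets, is a good rate function. (The convention $\inf\emptyset=\infty$ is precisely what makes $\tilde{I}$ well defined on points whose $f$-fibre is empty.)

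\emph{Part (b).} Write $\nu_\epsilon:=\mu_\epsilon\circ f^{-1}$. Unwinding the two nested infima gives $\inf_{y\in A}\tilde{I}(y)=\inf\{I(x):f(x)\in A\}$ for any $A\subseteq\mcal Y$, which transports the bounds for $I$ into bounds for $\tilde{I}$. Indeed, if $C\subseteq\mcal Y$ is closed then $f^{-1}(C)$ is closed in $\mcal X$, so (LDP2) for $\{\mu_\epsilon\}$ yields
\[
\limsup_{\epsilon\to 0}\epsilon\ln\nu_\epsilon(C)=\limsup_{\epsilon\to 0}\epsilon\ln\mu_\epsilon(f^{-1}(C))\le-\inf_{x\in f^{-1}(C)}I(x)=-\inf_{y\in C}\tilde{I}(y),
\]
and if $G\subseteq\mcal Y$ is open then $f^{-1}(G)$ is open, so (LDP1) yields $\liminf_{\epsilon\to 0}\epsilon\ln\nu_\epsilon(G)\ge-\inf_{y\in G}\tilde{I}(y)$. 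These are exactly (LDP1) and (LDP2) for $\{\nu_\epsilon\}$ with the rate function $\tilde{I}$.

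The only genuine subtlety is in part (a): one must not assume the infimum defining $\tilde{I}(y)$ is attained, and since $\mcal X$ is merely a Hausdorff topological space (not metric) the argument has to be run with nets and subnets, leaning essentially on the compactness of the level sets of $I$. This is exactly where the hypothesis that $I$ is \emph{good} (not just a rate function) is used, and it is also the reason $\tilde{I}$ comes out good rather than only lower semicontinuous. Part (b) is then routine bookkeeping of infima together with the topological behaviour of $f^{-1}$.
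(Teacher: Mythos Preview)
Your proof is correct and is essentially the standard argument for the contraction principle. Note, however, that the paper does not supply its own proof of this lemma: it is quoted verbatim as \cite[Theorem 4.2.1]{Dembo} and used as a black box, so there is no in-paper argument to compare against. Your write-up matches the usual textbook proof (compactness of level sets plus lower semicontinuity to show the infimum is attained and $\tilde I$ is good; continuity of $f$ on preimages to transport the LDP bounds), and your care with nets rather than sequences is appropriate given that $\mcal X$ is only assumed Hausdorff.
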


\begin{lem}\cite[Lemma 4.1.5]{Dembo}\label{subLDP}
	Let $E$ be a measurable subset of  $\mcal X$ such that $\mu_{\epsilon}(E)=1$ for all $\epsilon>0$. Suppose that $E$ is equipped with the topology induced by $\mcal X$. If $E$ is a closed subset of $\mcal X$ and $\{\mu_\epsilon\}_{\epsilon>0}$ satisfies the LDP on $E$ with the rate function $I$, then $\{\mu_\epsilon\}_{\epsilon>0}$ satisfies the LDP on $\mcal X$ with the rate function $\tilde{I}(y)$ such that $\tilde{I}(y)=I$ on $E$ and $\tilde{I}(y)=\infty$ on $E^c$.
\end{lem}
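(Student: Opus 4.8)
The plan is to transfer the LDP directly from $E$ to $\mcal X$, exploiting that each $\mu_\epsilon$ is concentrated on $E$ and that $E$ is closed in $\mcal X$. First I would check that $\tilde I$ is a rate function on $\mcal X$: for $a\in[0,\infty)$ the level set $\tilde I^{-1}([0,a])$ equals $I^{-1}([0,a])$ (since $\tilde I\equiv\infty$ off $E$), which is closed in $E$ by lower semicontinuity of $I$; because $E$ carries the subspace topology and is itself closed in $\mcal X$, this set is then closed in $\mcal X$ as well. Hence $\tilde I$ is lower semicontinuous, and if $I$ is good the same reasoning with ``compact'' in place of ``closed'' shows $\tilde I$ is good.

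Next I would verify the upper bound. For closed $C\subset\mcal X$, the set $C\cap E$ is closed in $E$ and $\mu_\epsilon(C)=\mu_\epsilon(C\cap E)$ since $\mu_\epsilon(E)=1$; applying (LDP2) for $I$ on $E$ and using $\tilde I=I$ on $E$ together with the identity $\inf_{C}\tilde I=\inf_{C\cap E}\tilde I$ (which holds because $\tilde I\equiv\infty$ on $E^c$, with the usual convention $\inf\emptyset=\infty$) gives
\begin{align*}
\limsup_{\epsilon\to0}\epsilon\ln\mu_\epsilon(C)=\limsup_{\epsilon\to0}\epsilon\ln\mu_\epsilon(C\cap E)\le-\inf_{C\cap E}I=-\inf_{C\cap E}\tilde I=-\inf_{C}\tilde I.
\end{align*}
Symmetrically, for open $U\subset\mcal X$ the set $U\cap E$ is open in $E$, $\mu_\epsilon(U)=\mu_\epsilon(U\cap E)$, and $\inf_{U}\tilde I=\inf_{U\cap E}\tilde I$ again because $\tilde I\equiv\infty$ on $E^c$; hence (LDP1) for $I$ on $E$ yields
\begin{align*}
\liminf_{\epsilon\to0}\epsilon\ln\mu_\epsilon(U)=\liminf_{\epsilon\to0}\epsilon\ln\mu_\epsilon(U\cap E)\ge-\inf_{U\cap E}I=-\inf_{U\cap E}\tilde I=-\inf_{U}\tilde I.
\end{align*}
This is exactly (LDP1)--(LDP2) for $\tilde I$ on $\mcal X$.

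I expect the only genuinely delicate point to be the lower semicontinuity (resp. goodness) of $\tilde I$ on $\mcal X$, that is, the fact that its level sets are closed (resp. compact) in $\mcal X$ and not merely in $E$: this is precisely where the hypothesis that $E$ is closed in $\mcal X$ is used, and the conclusion would fail for a merely measurable $E$. Everything else is bookkeeping with the identities $\mu_\epsilon(A)=\mu_\epsilon(A\cap E)$ and $\inf_A\tilde I=\inf_{A\cap E}\tilde I$ together with the relation between the subspace topology on $E$ and the topology of $\mcal X$.
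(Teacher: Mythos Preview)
Your argument is correct and is essentially the standard proof of this fact. Note, however, that the paper does not supply its own proof of this lemma: it is quoted as \cite[Lemma 4.1.5]{Dembo} and used as a black box, so there is no in-paper proof to compare against.
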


\begin{pro} \cite[Lemma 1.2.15]{Dembo} \label{limsup}
	Let $N$ be a fixed integer. Then,  for every $a^i_{\epsilon}\geq0$,
	\begin{align*}
	\limsup_{\epsilon\to 0}\epsilon\ln\left(\sum_{i=1}^{N}a^i_{\epsilon}\right)=\underset{i=1,\ldots,N}{\max}\limsup_{\epsilon\to 0}\epsilon\ln a^i_{\epsilon}.
	\end{align*}
\end{pro}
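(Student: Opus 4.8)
The plan is to prove the identity by squeezing $\limsup_{\epsilon\to0}\epsilon\ln\big(\sum_{i=1}^{N}a^i_{\epsilon}\big)$ between a lower and an upper bound that turn out to coincide. Throughout I write $F_i(\epsilon):=\epsilon\ln a^i_{\epsilon}$ and $F(\epsilon):=\epsilon\ln\big(\sum_{i=1}^{N}a^i_{\epsilon}\big)$, valued in $[-\infty,\infty]$ with the convention $\ln 0=-\infty$, and I keep in mind that $\epsilon>0$, so that $t\mapsto\epsilon\ln t$ is nondecreasing on $[0,\infty)$ and $\epsilon\ln\big(\max_{i}a^i_{\epsilon}\big)=\max_{i}F_i(\epsilon)$.

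For the lower bound I would use that $\sum_{i=1}^{N}a^i_{\epsilon}\ge a^j_{\epsilon}$ for each fixed $j$, which by monotonicity gives $F(\epsilon)\ge F_j(\epsilon)$; passing to $\limsup_{\epsilon\to0}$ and then maximizing over $j\in\{1,\dots,N\}$ yields $\limsup_{\epsilon\to0}F(\epsilon)\ge\max_{1\le j\le N}\limsup_{\epsilon\to0}F_j(\epsilon)$. For the upper bound I would use $\sum_{i=1}^{N}a^i_{\epsilon}\le N\max_{1\le i\le N}a^i_{\epsilon}$, which gives $F(\epsilon)\le\epsilon\ln N+\max_{1\le i\le N}F_i(\epsilon)$; since $\epsilon\ln N\to0$ as $\epsilon\to0$, this yields $\limsup_{\epsilon\to0}F(\epsilon)\le\limsup_{\epsilon\to0}\max_{1\le i\le N}F_i(\epsilon)$.

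The remaining ingredient, and the only step that needs genuine care, is the interchange $\limsup_{\epsilon\to0}\max_{1\le i\le N}F_i(\epsilon)=\max_{1\le i\le N}\limsup_{\epsilon\to0}F_i(\epsilon)$, which is precisely where the finiteness of $N$ enters. The inequality ``$\ge$'' is immediate from $\max_{i}F_i\ge F_j$. For ``$\le$'', I would take a sequence $\epsilon_n\downarrow0$ along which $\max_{1\le i\le N}F_i(\epsilon_n)$ converges to $\limsup_{\epsilon\to0}\max_{1\le i\le N}F_i(\epsilon)$, choose for each $n$ an index $i_n$ attaining the maximum, and use the pigeonhole principle to extract a subsequence along which the chosen index is a fixed $i_0$; along that subsequence $F_{i_0}=\max_{1\le i\le N}F_i$, hence $\limsup_{\epsilon\to0}F_{i_0}(\epsilon)\ge\limsup_{\epsilon\to0}\max_{1\le i\le N}F_i(\epsilon)$, so $\max_{1\le i\le N}\limsup_{\epsilon\to0}F_i(\epsilon)\ge\limsup_{\epsilon\to0}\max_{1\le i\le N}F_i(\epsilon)$. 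Combining the two bounds with this interchange gives the claimed equality.

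I do not anticipate a real obstacle: the argument is elementary, and the only things to watch are the bookkeeping of the $[-\infty,\infty]$-valued conventions, so that every inequality above remains meaningful and correct when one or several of the $a^i_{\epsilon}$ vanish (in which case both sides may equal $-\infty$), and the fact that the pigeonhole step genuinely fails for an infinite family, which is why the hypothesis that $N$ is a fixed integer cannot be dropped.
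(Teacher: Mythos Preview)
Your argument is correct and is the standard proof of this elementary lemma. Note that the paper does not actually prove this proposition: it is quoted verbatim from \cite[Lemma 1.2.15]{Dembo} and used as a tool, so there is no ``paper's own proof'' to compare against; your squeeze via $a^j_{\epsilon}\le\sum_i a^i_{\epsilon}\le N\max_i a^i_{\epsilon}$ together with the pigeonhole interchange of $\limsup$ and $\max$ over a finite index set is exactly the argument found in the cited reference.
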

Proposition \ref{limsup} is an important tool in deriving (LDP1) and (LDP2). Furthermore,  we need to make use of the following proposition in stochastic calculus. 

\begin{pro}\cite[Propostition 1.13]{Prato2}\label{Fernique}
	Assume that $\widetilde{Q}$ is a nonnegative symmetric operator on a real separable Hilbert space $H$ with finite trace. Let $\lambda_1\geq\lambda_2\geq\cdots\geq\lambda_n\geq\cdots$ be the eigenvalues of $\widetilde{Q}$. Define the determinant of $(I-2\epsilon \widetilde{Q})$ by setting $\det(I-2\epsilon \widetilde{Q}):=\lim_{n\to \infty}\prod_{k=1}^{n}(1-2\epsilon\lambda_k):=\prod_{k=1}^{\infty}(1-2\epsilon\lambda_k)$. Let $\mu=\mcal N(0,\widetilde{Q})$ be the symmetric Gaussian measure on $H$. Then for every $\epsilon\in\mbb R$, 
	\begin{align}
		\int_{H}e^{\epsilon\|x\|_H^2}\mu(dx)=
		\begin{cases}
		\left[\det(I-2\epsilon \widetilde{Q})\right]^{-1/2}, \qquad &\text{if}~\epsilon<\frac{1}{2\lambda_1},\\
		+\infty,&\text{otherwise}.
		\end{cases}
	\end{align}
\end{pro}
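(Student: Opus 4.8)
The plan is to reduce this classical Fernique‑type identity to a product of one‑dimensional Gaussian integrals by diagonalizing $\widetilde Q$. Since $\widetilde Q$ is nonnegative, symmetric and of finite trace, it is compact and admits an orthonormal eigenbasis $\{e_k\}_{k\in\mbb N}$ of $H$ with $\widetilde Q e_k=\lambda_k e_k$, $\lambda_1\ge\lambda_2\ge\cdots\ge 0$ and $\sum_k\lambda_k=\tr\widetilde Q<\infty$. Setting $x_k:=\langle x,e_k\rangle_H$, the first step is to record the coordinate structure of $\mu=\mcal N(0,\widetilde Q)$: each functional $\langle\cdot,h\rangle_H$ is a centered Gaussian with variance $\langle\widetilde Q h,h\rangle_H$, and, the $x_k$ being jointly Gaussian with $\int_H x_jx_k\,\mu(\ud x)=\langle\widetilde Q e_j,e_k\rangle_H=\lambda_j\delta_{jk}$, they are independent with $x_k\sim\mcal N(0,\lambda_k)$. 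The finite‑trace hypothesis then gives $\mbf E\sum_k x_k^2=\sum_k\lambda_k<\infty$, hence $\|x\|_H^2=\sum_k x_k^2$ holds $\mu$-a.s.

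The second step is the scalar computation and the assembly. Completing the square shows that for $\xi\sim\mcal N(0,\lambda)$ with $\lambda>0$ one has $\mbf E[e^{\epsilon\xi^2}]=(1-2\epsilon\lambda)^{-1/2}$ if $\epsilon<\frac{1}{2\lambda}$ and $\mbf E[e^{\epsilon\xi^2}]=+\infty$ otherwise (the value being $1$ when $\lambda=0$). Fixing $\epsilon\in\mbb R$ and writing $S_n:=\sum_{k=1}^n x_k^2$, independence gives $\mbf E[e^{\epsilon S_n}]=\prod_{k=1}^n(1-2\epsilon\lambda_k)^{-1/2}$ whenever $\epsilon<\frac{1}{2\lambda_1}$, since then $\epsilon<\frac{1}{2\lambda_k}$ for every $k$. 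For $\epsilon\ge 0$ the monotone convergence theorem (using $e^{\epsilon S_n}\uparrow e^{\epsilon\|x\|_H^2}$) and for $\epsilon<0$ the dominated convergence theorem (using $0\le e^{\epsilon S_n}\le 1$) both yield $\int_H e^{\epsilon\|x\|_H^2}\mu(\ud x)=\lim_{n\to\infty}\prod_{k=1}^n(1-2\epsilon\lambda_k)^{-1/2}$. Since $\sum_k|\log(1-2\epsilon\lambda_k)|\le C(\epsilon)\sum_k\lambda_k<\infty$ for such $\epsilon$, this product converges to a strictly positive limit $[\det(I-2\epsilon\widetilde Q)]^{-1/2}$, which is the first case; and if $\epsilon\ge\frac{1}{2\lambda_1}$ (in particular $\epsilon>0$ and $\lambda_1>0$) then $\|x\|_H^2\ge x_1^2$ forces $\int_H e^{\epsilon\|x\|_H^2}\mu(\ud x)\ge\mbf E[e^{\epsilon x_1^2}]=+\infty$, the second case.

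The only points requiring care are the a.s.\ identity $\|x\|_H^2=\sum_k x_k^2$, the interchange of limit and integral, and the convergence of the infinite product, all of which follow from $\tr\widetilde Q<\infty$ via the monotone/dominated convergence theorems and an elementary estimate on $\log(1-u)$. No serious obstacle is expected here: the statement is simply the infinite‑dimensional counterpart of the $\chi^2$ moment generating function, and in the body of the paper one may just invoke \cite{Prato2} directly.
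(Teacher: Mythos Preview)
The paper does not give its own proof of this proposition: it is stated as a citation of \cite[Proposition 1.13]{Prato2} and then invoked as a black box in the exponential tightness arguments. Your argument is correct and is exactly the standard diagonalization proof one finds in Da Prato's book, so there is nothing to compare---you have supplied the proof the paper omits, and your closing remark that one may simply cite \cite{Prato2} is precisely what the paper does.
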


\subsection{LDP for $\{B_T\}_{T>0}$}
In this subsection, we show the LDP for $\{B_T\}_{T>0}$ of \eqref{xde1} by using Theorem \ref{GE}, where $B_T:=\frac{u(T)}{T}$ with $u(T)$ being the solution of \eqref{xde1} at time $T$. The regime of G\"artner--Ellis theorem is applicable to the real Banach space. Given that the exact solution $\{u(t)\}_{t\geq 0}$ takes values in $H^0$, the space of complex-valued functions, we use the real inner product  to establish the LDP of $\{B_T\}_{T>0}$ on $H^0$.
%
\begin{theo}\label{LDP for BT}
 	$\{B_T\}_{T>0}$ satisfies an LDP on $H^0$ with the good rate function
 	\begin{align*}
 	I(x)=\begin{cases}
 	\frac{1}{\alpha^2}\left\|Q^{-\frac{1}{2}}x\right\|_{H^0}^2, \qquad &\text{if}~x\in Q^{\frac{1}{2}}(H^0), \\
 	+\infty, &\text{otherwise},
 	\end{cases}
 	\end{align*}
where $Q^{-\frac{1}{2}}$ is the pseudo inverse of $Q^{\frac{1}{2}}$.
\end{theo}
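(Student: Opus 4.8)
**The plan is to apply the abstract Gärtner–Ellis theorem (Theorem \ref{GE}) to the family $\{\mu_T\}_{T>0}$ of laws of $B_T=u(T)/T$ on $H^0$ with $\epsilon=1/T$.** This requires three ingredients: (i) compute the logarithmic moment generating function $\Lambda(\lambda)=\lim_{T\to\infty}\tfrac1T\Lambda_{\mu_T}(T\lambda)$ for $\lambda$ in the dual of $H^0$ (which, via the real inner product, we identify with $H^0$ itself so that $\lambda(x)=\langle x,h\rangle_\mathbb{R}$ for some $h\in H^0$); (ii) verify that $\Lambda$ is finite-valued and Gâteaux differentiable; (iii) establish exponential tightness of $\{\mu_T\}_{T>0}$ on $H^0$. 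Then Theorem \ref{GE} gives the LDP with rate function $\Lambda^*$, and a final Legendre-transform computation identifies $\Lambda^*$ with the stated $I$.

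For step (i), since $\langle u(T),h\rangle_\mathbb{R}$ is a Gaussian random variable (a linear functional of the Gaussian process \eqref{mild}), its moment generating function is explicit: $\mathbf E\exp(s\langle u(T),h\rangle_\mathbb{R})=\exp\!\big(s\,\mathbf E\langle u(T),h\rangle_\mathbb{R}+\tfrac{s^2}{2}\mathbf{Var}\langle u(T),h\rangle_\mathbb{R}\big)$. Using \eqref{Eut,h} and \eqref{varut,h1}, one has $\mathbf E\langle u(T),h\rangle_\mathbb{R}=\langle S(T)u_0,h\rangle_\mathbb{R}$, which is $O(1)$ (the group is unitary), and $\mathbf{Var}\langle u(T),h\rangle_\mathbb{R}=\tfrac{T\alpha^2}{2}\big(\langle Q\Re h,\Re h\rangle_\mathbb{R}+\langle Q\Im h,\Im h\rangle_\mathbb{R}\big)+O(1)$, where the $O(1)$ terms come from the bounded operators $\Delta^{-1}\sin(2T\Delta)$ and $\Delta^{-1}(I-\cos(2T\Delta))$. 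Therefore
\begin{align*}
\Lambda(h)=\lim_{T\to\infty}\frac1T\ln\mathbf E\,e^{T\langle B_T,h\rangle_\mathbb{R}}
=\lim_{T\to\infty}\frac1T\ln\mathbf E\,e^{\langle u(T),h\rangle_\mathbb{R}}
=\frac{\alpha^2}{4}\big(\langle Q\Re h,\Re h\rangle_\mathbb{R}+\langle Q\Im h,\Im h\rangle_\mathbb{R}\big),
\end{align*}
which is a quadratic form, hence finite-valued on all of $H^0$ and Gâteaux (indeed Fréchet) differentiable. Writing $\Lambda(h)=\tfrac{\alpha^2}{4}\langle Q h,h\rangle_\mathbb{R}$ with $Q$ extended to $H^0$ componentwise, the Legendre transform $\Lambda^*(x)=\sup_{h}\{\langle x,h\rangle_\mathbb{R}-\tfrac{\alpha^2}{4}\langle Qh,h\rangle_\mathbb{R}\}$ is the standard dual of a Gaussian-type quadratic: it equals $\tfrac{1}{\alpha^2}\|Q^{-1/2}x\|_{H^0}^2$ when $x\in Q^{1/2}(H^0)$ and $+\infty$ otherwise (this is exactly the Cameron–Martin / reproducing-kernel-Hilbert-space norm associated with the covariance $\tfrac{\alpha^2}{2}Q$, up to the factor bookkeeping; one checks the constant by the substitution $x=Q^{1/2}y$ and optimizing over $h$), matching the claimed $I$.

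**The main obstacle is step (iii), exponential tightness**, because the Schrödinger group is not compact and $B_T$ lives in the infinite-dimensional space $H^0$. The strategy, as outlined in the introduction, is two-fold. First, exploit the extra spatial regularity: since $Q^{1/2}\in\mathcal L_2(U^0,U^1)$ and $u_0\in H^1_0$, the solution $u(t)$ takes values in $H^1$ with a variance bound on $\langle u(T),h\rangle$ that grows at most linearly in $T$ also in the $H^1$-pairing; consequently, for each $R>0$ the set $K_R=\{x\in H^0:\|x\|_{H^1}\le R\}$ is compact in $H^0$ (Rellich), and it suffices to bound $\mu_T(K_R^c)=\mathbf P(\|B_T\|_{H^1}>R)$ on the exponential scale $1/T$. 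Second, to get such a bound, note $\|B_T\|_{H^1}^2=\tfrac1{T^2}\|u(T)\|_{H^1}^2$ and $u(T)$ is Gaussian in $H^1$ with mean $S(T)u_0$ of bounded $H^1$-norm and covariance operator $\widetilde Q_T$ on $H^1$ of trace $O(T)$ and largest eigenvalue $O(T)$; applying the Fernique-type estimate Proposition \ref{Fernique} to the centered part gives, for suitable fixed small $c>0$,
\begin{align*}
\mathbf E\exp\!\big(c\,T\,\|B_T\|_{H^1}^2\big)=\mathbf E\exp\!\big(\tfrac{c}{T}\|u(T)\|_{H^1}^2\big)\le C
\end{align*}
uniformly in large $T$ (the condition $\tfrac{c}{T}<\tfrac{1}{2\lambda_1(\widetilde Q_T)}$ holds since $\lambda_1(\widetilde Q_T)=O(T)$). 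Then Markov's inequality yields $\mathbf P(\|B_T\|_{H^1}>R)\le C\exp(-cTR^2)$, so $\limsup_{T\to\infty}\tfrac1T\ln\mu_T(K_R^c)\le -cR^2\to-\infty$ as $R\to\infty$, which is exponential tightness. With all three ingredients in hand, Theorem \ref{GE} delivers the LDP and the identification of the rate function completes the proof.
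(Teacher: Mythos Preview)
Your proposal is correct and follows essentially the same route as the paper: compute $\Lambda$ via the Gaussian law of $\langle u(T),h\rangle_{\mathbb R}$ using \eqref{Eut,h} and \eqref{varut,h1}, obtain exponential tightness from compactness of $H^1$-balls in $H^0$ together with the Fernique estimate (Proposition~\ref{Fernique}) and Markov's inequality, and then identify $\Lambda^*$ with the Cameron--Martin norm of $\mathcal N(0,\tfrac{\alpha^2}{2}Q)$. The only execution differences are that the paper splits $u(T)-S(T)u_0$ into the real-valued pieces $W_{\sin}(T)$ and $W_{\cos}(T)$ and applies Fernique to each on $U^1$ separately (which makes the uniform-in-$T$ bound on the determinant explicit via $\lambda_k(\tfrac1T\widetilde Q_T)<\eta_k$), and it spells out the RKHS argument for $\Lambda^*$ in detail rather than quoting it as standard.
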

\begin{proof}
	We divide the proof into three steps.\\
\emph{Step $1$: The logarithmic moment generating function of $\{B_T\}_{T>0}$}\\
For each $\lambda\in H^0$, define the mapping $\lambda^\prime: H^0\rightarrow \mbb R$ by $\lambda^\prime(x)=\left\langle x,\lambda\right\rangle_\mathbb{R}$, $x\in H^0$. Then by Riesz representation theorem, $\{\lambda'\}_{\lambda\in H^0}$ forms the set of  all real bounded linear functionals of $H^0$, i.e., $\{\lambda'\}_{\lambda\in H^0}=(H^0)^*$. 
Since $\langle u(t),\lambda\rangle_{\mbb R}$ is Gaussian, it follows from \eqref{Eut,h} and \eqref{varut,h1} that
\begin{align}
	\Lambda(\lambda')=&\lim_{T\to \infty}\frac{1}{T}\ln\mbf Ee^{T\langle B_T,\lambda\rangle_{\mbb R}}=\lim_{T\to \infty}\frac{1}{T}\ln\mbf Ee^{\langle u(T),\lambda\rangle_{\mbb R}} \nonumber\\
	=&\lim_{T\to \infty}\frac{1}{T}\left[\mbf E\langle u(T),\lambda\rangle_{\mbb R}+\frac{1}{2}\mbf{Var}\langle u(T),\lambda\rangle_{\mbb R}\right]\nonumber\\
	=&\frac{\alpha^2}{4}\left(\left\langle Q\Re \lambda,\Re \lambda\right\rangle_\mathbb{R}+\left\langle Q\Im \lambda,\Im \lambda\right\rangle_\mathbb{R}\right)\nonumber\\
	=&\frac{\alpha^2}{4}\left\|Q^{\frac{1}{2}}\lambda\right\|_{H^0}^2, \label{Lambda}
\end{align}
where we use the facts $\left\|\sin(t\Delta)\right\|_{\mcal L(H^0)}\leq1$, $\left\|\cos(t\Delta)\right\|_{\mcal L(H^0)}\leq1$ and $\left\|\Delta^{-1}\right\|_{\mcal L(H^0)}=1$.

\emph{Step $2$: Exponential tightness of $\{B_T\}_{T>0}$}\\
In order to obtain the exponential tightness of $\{B_T\}_{T>0}$ (see Definition \ref{exptightdef}), it suffices to show that there exists a family of compact sets $\{K_L\}_{L>0}$ such that 
\begin{align}\label{provetingt}
	\lim_{L\to \infty}\limsup_{T\to\infty}\frac{1}{T}\mbf \ln \mbf P\left(B_T\in K^c_L\right)=-\infty.
\end{align}
Define $K_L=\left\{f\in H^1\left|\right.\|f\|_{H^1}\leq L\right\}$. Then $K_L$ is the compact set of $H^0$. Recall that $u(T)=S(T)u_0-\alpha W_{\sin}(T)+\bm{i}\alpha W_{\cos}(T)$. Thus
\begin{align}
&\mbf P\left(B_T\in K_L^c\right)=\mbf P\left(\|u(T)\|_{H^1}>LT\right)\nonumber\\
\leq&\mbf P\left(\|S(T)u_0\|_{H^1}>\frac{TL}{3}\right)+\mbf P\left(\alpha\|W_{\sin}(T)\|_{U^1}>\frac{TL}{3}\right)+\mbf P\left(\alpha\|W_{\cos}(T)\|_{U^1}>\frac{TL}{3}\right). \label{716k1}
\end{align}
Since the first term in \eqref{716k1} is $0$ for sufficiently large $T$, we only need to estimate the second and third terms in \eqref{716k1}.

Since $Q^{\frac{1}{2}}\in\mcal L_2(U^0,U^1)$, $Q$ is also the finite trace operator on $U^1$. Then we obtain from \eqref{sinint} that
\begin{gather*}
	W_{\sin}(T)\sim\mcal N\left(0,\int_{0}^{t}\sin^2((t-s)\Delta)Qds\right)=\mcal N\left(0,\left(\frac{TI}{2}-\frac{\Delta^{-1}\sin(2T\Delta)}{4}\right)Q\right) \text{on $U^1$}.
\end{gather*}
Further, it holds that
\begin{align}
	\frac{W_{\sin(T)}}{\sqrt{T}}\sim\mcal N\left(0,\left(\frac{I}{2}-\frac{\Delta^{-1}\sin(2T\Delta)}{4T}\right)Q\right) \text{on $U^1$}.
\end{align}
By Markov's inequality, for each $\varepsilon>0$,
\begin{align}
	\mbf P\left(\alpha\|W_{\sin}(T)\|_{U^1}>\frac{TL}{3}\right)&=\mbf P\left(\left\|\frac{W_{\sin}(T)}{\sqrt{T}}\right\|_{U^1}>\frac{
		\sqrt{T}L}{3\alpha}\right)  \nonumber\\
	&=\mbf P\left(\exp\left\{\varepsilon\left\|\frac{W_{\sin}(T)}{\sqrt{T}}\right\|^2_{U^1}\right\}>\exp\left\{\frac{
	\varepsilon TL^2}{9\alpha^2}\right\}\right)\nonumber\\ 
	&\leq e^{-\frac{\varepsilon TL^2}{9\alpha^2}}\mbf E\exp\left\{{\varepsilon\left\|\frac{W_{\sin}(T)}{\sqrt{T}}\right\|_{U^1}^2}\right\}. \label{716k2}
\end{align}
Notice that $\lambda_k\left(\left(\frac{I}{2}-\frac{\Delta^{-1}\sin(2T\Delta)}{4T}\right)Q\right)=\left(\frac{1}{2}-\frac{\sin(2Tk^2)}{4Tk^2}\right)\eta_k=\frac{1}{2}\left(1-\frac{\sin(2Tk^2)}{2Tk^2}\right)\eta_k<\eta_k\leq\eta_1$. It follows from Proposition \ref{Fernique} that for each $0<\varepsilon<\frac{1}{2\eta_1}$, 
\begin{align}
	\mbf E\exp\left\{{\varepsilon\left\|\frac{W_{\sin}(T)}{\sqrt{T}}\right\|_{U^1}^2}\right\}&=\left[\det\left(I-2\varepsilon\left(\frac{I}{2}-\frac{\Delta^{-1}\sin(2T\Delta)}{4T}\right)Q\right)\right]^{-\frac{1}{2}}\nonumber\\
	&<\left[\det(I-2\varepsilon Q)\right]^{-\frac{1}{2}}=C(\varepsilon,Q),\label{716k3}
\end{align}
where we have used the fact that $\left[\det(I-2\varepsilon Q)\right]^{-\frac{1}{2}}=\left(\prod_{k=1}^{\infty}(1-2\varepsilon\eta_k)\right)^{-\frac{1}{2}}$ is monotonically increasing with respect to $\eta_k$ for every $k=1,2,\ldots$
Combining \eqref{716k3} with  \eqref{716k2} yields
\begin{align}
	\limsup_{T\to\infty}\frac{1}{T}\ln\mbf P\left(\alpha\|W_{\sin}(T)\|_{U^1}>\frac{TL}{3}\right)\leq\limsup_{T\to\infty}\frac{1}{T}\ln\left(e^{-\frac{\varepsilon TL^2}{9\alpha^2}}C(\varepsilon,Q)\right)={-\frac{\varepsilon L^2}{9\alpha^2}}. \label{917k1}
\end{align}

In addition, it holds that
\begin{align*}
\frac{W_{\cos(T)}}{\sqrt{T}}\sim\mcal N\left(0,\left(\frac{I}{2}+\frac{\Delta^{-1}\sin(2T\Delta)}{4T}\right)Q\right) \text{on $U^1$}.
\end{align*}
Then $\lambda_k\left(\left(\frac{I}{2}+\frac{\Delta^{-1}\sin(2T\Delta)}{4T}\right)Q\right)=\frac{1}{2}\left(1+\frac{\sin(2Tk^2)}{2Tk^2}\right)\eta_k<\eta_k\leq\eta_1$.
Analogous to the proof of \eqref{917k1}, one  has that for $0<\varepsilon<\frac{1}{2\eta_1}$, 
\begin{align}
\limsup_{T\to\infty}\frac{1}{T}\ln\mbf P\left(\alpha\|W_{\cos}(T)\|_{U^1}>\frac{TL}{3}\right)\leq{-\frac{\varepsilon L^2}{9\alpha^2}}. \label{917k2}
\end{align}
Combining \eqref{917k1}, \eqref{917k2}, \eqref{716k1} and Proposition \ref{limsup}, we obtain 
\begin{align*}
\limsup_{T\to\infty}\frac{1}{T}\ln\mbf P\left(B_T\in K_L^c\right)\leq \max\left\{-\frac{\varepsilon L^2}{9\alpha^2},-\frac{\varepsilon L^2}{9\alpha^2}\right\}=-\frac{\varepsilon L^2}{9\alpha^2}.
\end{align*}
Accordingly, we have
\begin{align}
	\lim_{L\to \infty}\limsup_{T\to\infty}\frac{1}{T}\ln\mbf P\left(B_T\in K_L^c\right)=-\infty,
\end{align}
which proves the exponential tightness of $\{B_T\}_{T>0}$.

It is verified that $\Lambda(\lambda')$ is finite valued and Gateaux differentiable. In fact, $\Lambda(\lambda')$ is Fr\'echet differentiable, and its Fr\'echet derivative is $\mcal D\Lambda(\lambda')(\cdot)=\frac{\alpha^2}{2}\langle Q\lambda,\cdot\rangle_{\mbb R}$.
Due to Theorem \ref{GE}, $\{B_T\}_{T>0}$ satisfies an LDP on $H^0$ with the good rate function $\Lambda^*$. It remains to give the explicit expression of the Fenchel--Legendre transform $\Lambda^*$ of $\Lambda$.

\emph{Step $3$: The explicit expression of  $\Lambda^*$}\\
Before giving the expression of  $\Lambda^*$, we recall the concept of reproducing kernel Hilbert space (RKHS).
Let $\mu$ be a centered Gaussian measure on a separable Banach space $E$. An arbitrary $\varphi\in E^*$  can be identified with an element of the Hilbert space $L^2(\mu):=L^2(E,\mcal B(E),\mu;\mbb R)$. Denote by $\overline{E^*}=\overline{E^*}^{L^2(\mu)}$ the closure of $E^*$ in $L^2(\mu)$. Define a mapping $J:\overline{E^*}\to E$ by setting
  \begin{align*}
  	J(\varphi)=\int_{E}x\varphi(x)\mu(dx),\qquad \forall\quad \varphi\in\overline{E^*}.
  \end{align*} 
  Then the image $\mcal H_{\mu}$ of $J$ in $E$, $\mcal H_{\mu}=J(\overline{E^*})$ is the RKHS of $\mu$ with the scalar product
  \begin{align*}
  	\left\langle J(\varphi),J(\psi)\right\rangle_{\mcal H_{\mu}}=\int_{E}\varphi(x)\psi(x)\mu(dx).
  \end{align*}
  Further, if $\mu=\mcal N\mcal(0,\widetilde Q)$ is a Gaussian measure on some Hilbert space $H$ with $\widetilde Q$ being a nonnegative symmetric  operator with finite trace, then the RKHS $\mcal H_{\mu}$ of $\mu$ 
  is $\mcal H_{\mu}=\widetilde Q^{\frac{1}{2}}(H)$ with the norm $\|x\|_{\mcal H_{\mu}}=\|\widetilde Q^{-\frac{1}{2}}x\|_H$.  We refer to \cite[Section 2.2.2]{Prato1} for more details of the RKHS. 

In our case,  $\mu=\mcal N(0,Q)$. The mapping $J: \overline{(H^0)^*}^{L^2(\mu)}\to H^0$ is
\begin{align*}
	J(h)=\int_{H^0}zh(z)\mu(dz).
\end{align*}
Then $\mcal H_{\mu}=J\left(\overline{(H^0)^*}^{L^2(\mu)}\right)=Q^{\frac{1}{2}}(H^0)$.
It follows from the properties of Gaussian measure that
\begin{align*}
	\int_{H^0}\left\langle\lambda,x\right\rangle_{\mbb R}^2\mu(dx)=\left\langle Q\lambda,\lambda\right\rangle_{\mbb R}=\left\|Q^{\frac{1}{2}}\lambda\right\|_{H^0}^2.
\end{align*}
Thus, $\Lambda(\lambda')=\frac{\alpha^2}{4}\left\|Q^{\frac{1}{2}}\lambda\right\|_{H^0}^2=\frac{\alpha^2}{4}\left\|\lambda'\right\|^2_{L^2(\mu)}$.
Recall that 
\begin{align*}
	\Lambda^*(x)=\underset{\lambda'\in (H^0)^*}{\sup}\left\{\lambda'(x)-\Lambda(\lambda')\right\}.
\end{align*}

For a given $x\in H^0$, if $\Lambda^*(x)<+\infty$, then there exists a constant $C(x)<+\infty$ such that $\lambda'(x)\leq\frac{\alpha^2}{4}\left\|\lambda'\right\|_{L^2(\mu)}^2+C(x)$. Define the linear functional $x^{**}$ on $\left((H^0)^*,\left\|\cdot\right\|_{L^2(\mu)}\right)\subseteq\overline{(H^0)^*}^{L^2(\mu)}$ by $x^{**}(\lambda')=\lambda'(x)$, for every $\lambda'\in(H^0)^*$. Then we have $\underset{\lambda'\in(H^0)^*,~\left\|\lambda'\right\|_{L^2(\mu)}\leq1}{\sup}x^{**}(\lambda')\leq\frac{\alpha^2}{4}+C(x)$. It means that $x^{**}$ is a bounded linear functional on $\left((H^0)^*,\left\|\cdot\right\|_{L^2(\mu)}\right)$. By Hahn--Banach theorem and the fact that  $\left((H^0)^*,\left\|\cdot\right\|_{L^2(\mu)}\right)$ is dense in $\overline{(H^0)^*}^{L^2(\mu)}$, $x^{**}$ can be uniquely extended to $\overline{(H^0)^*}^{L^2(\mu)}$. (In fact, for each $\lambda'\in\overline{(H^0)^*}^{L^2(\mu)}$, take $\lambda'_n\in (H^0)^*$ such that $\lambda'_n\to\lambda'$ in the norm $\left\|\cdot\right\|_{L^2(\mu)}$. Then the extended functional is $x^{**}(\lambda')=\lim_{n\to \infty}x^{**}(\lambda'_n)$.) The extended functional is still denoted by $x^{**}$. In this way, for every $x\in H^0$ satisfying $\Lambda^*(x)<+\infty$, we obtain a bounded linear functional on $\overline{(H^0)^*}^{L^2(\mu)}$ such that $x^{**}(\lambda')=\lambda'(x)$ for each $\lambda'\in (H^0)^*$.
By Riesz representation theorem, there exists some $h\in\overline{(H^0)^*}^{L^2(\mu)}$ such that 
$x^{**}(\lambda')=\left\langle \lambda',h\right\rangle_{L^2(\mu)}$ for each $\lambda'\in\overline{(H^0)^*}^{L^2(\mu)}$. Hence, $\lambda'(x)=\left\langle \lambda',h\right\rangle_{L^2(\mu)}$ for each $\lambda'\in(H^0)^*$. Further, we have that
\begin{align*}
	\lambda'(x)=\int_{H^0}h(z)\lambda'(z)\mu(dz)=\lambda'\left(\int_{H^0}zh(z)\mu(dz)\right)=\lambda'(J(h)),\qquad\forall\quad\lambda'\in(H^0)^*.
\end{align*}
By the arbitrariness of $\lambda'$, $x=J(h)$. Hence, $\Lambda^*(x)<+\infty$ implies that $x\in \mcal  H_{\mu}=J\left(\overline{(H^0)^*}^{L^2(\mu)}\right)=Im(Q^{\frac{1}{2}})$, where $Im(Q^{\frac{1}{2}})$ is the image of  $Q^{\frac{1}{2}}$.

On the other hand, if $H^0\ni x=J(h)$ for some $h\in\overline{(H^0)^*}^{L^2(\mu)}$, then
\begin{align*}
	\Lambda^*(x)&=\Lambda^*(J(h))=\underset{\lambda'\in (H^0)^*}{\sup}\left\{\lambda'(J(h))-\frac{\alpha^2}{4}\left\|\lambda'\right\|^2_{L^2(\mu)}\right\}\nonumber\\
	&=\underset{\lambda'\in (H^0)^*}{\sup}\left\{\left\langle \lambda',h\right\rangle_{L^2(\mu)}-\frac{\alpha^2}{4}\left\|\lambda'\right\|^2_{L^2(\mu)}\right\}.\nonumber
\end{align*}
Noting the continuity of $\left\langle \lambda',h\right\rangle_{L^2(\mu)}-\frac{\alpha^2}{4}\left\|\lambda'\right\|^2_{L^2(\mu)}$ with respect to $\lambda'$ in the norm $\left\|\cdot\right\|_{L^2(\mu)}$, and that $\left((H^0)^*,\left\|\cdot\right\|_{L^2(\mu)}\right)$ is dense in $\overline{(H^0)^*}^{L^2(\mu)}$, we have
\begin{align*}
\Lambda^*(x)
&=\underset{g\in\overline{(H^0)^*}^{L^2(\mu)}}{\sup}\left\{\left\langle g,h\right\rangle_{L^2(\mu)}-\frac{\alpha^2}{4}\left\|g\right\|^2_{L^2(\mu)}\right\}\nonumber\nonumber\\
&\leq\underset{g\in\overline{(H^0)^*}^{L^2(\mu)}}{\sup}\left\{\frac{1}{2}\left[\frac{\alpha^2}{2}\left\|g\right\|^2_{L^2(\mu)}+\frac{2}{\alpha^2}\left\|h\right\|^2_{L^2(\mu)}\right]-\frac{\alpha^2}{4}\left\|g\right\|^2_{L^2(\mu)}\right\}\nonumber\\
&=\frac{1}{\alpha^2}\left\|h\right\|^2_{L^2(\mu)}. \nonumber
\end{align*}
Taking $g=\frac{2}{\alpha^2}h$ leads to $\Lambda^*(x)\geq\frac{1}{\alpha^2}\left\|h\right\|^2_{L^2(\mu)}$. Thus, we obtain
\begin{align}\label{sec3k1}
	\Lambda^*(x)=\frac{1}{\alpha^2}\left\|h\right\|^2_{L^2(\mu)}=\frac{1}{\alpha^2}\left\|x\right\|_{\mcal H_{\mu}}^2=\frac{1}{\alpha^2}\left\|Q^{-\frac{1}{2}}x\right\|_{H^0}^2.
\end{align}
Finally we have
\begin{align*}
	\Lambda^*(x)=\begin{cases}
	\frac{1}{\alpha^2}\left\|Q^{-\frac{1}{2}}x\right\|_{H^0}^2, \qquad &\text{if}~x\in Q^{\frac{1}{2}}(H^0), \\
	+\infty, &\text{otherwise},
	\end{cases}
\end{align*}
which completes this proof.
\end{proof}
Similar to the proof of  \cite[Proposition 3.1]{Cohen}, we obtain $\mbf E\|u(T)\|^2_{H^0}=\mbf E\|u_0\|^2_{H^0}+\alpha^2T\tr(Q)$, where $\tr(Q)=\sum_{k=1}^{\infty}\eta_{k}$. Then, by  Markov's inequality, one has
	that for each $R>0$ and sufficiently large $T$
	\begin{align}\label{sec3tail}
		\mbf P\left(\left\|u(T)\right\|^2_{H^0}\geq T^2R^2\right)\leq\frac{\mbf E\|u(T)\|^2_{H^0}}{T^2R^2}\leq\frac{C}{T},
	\end{align}
	for some constant $C$ independent of $T$.
	In what follows, we show that the probability of the tail event of the mass  $\|u(T)\|^2_{H^0}$ in \eqref{sec3tail} can be exponentially small.
	More precisely, by Lemma \ref{contraction} and Theorem \ref{LDP for BT}, we immediately obtain the LDP of $\left\{\left\|B_T\right\|_{H^0}\right\}_{T>0}$, which yields the following corollary.
\begin{cor}\label{sec3cor}
	If $Q$ is an injection, then 
	it holds that
	\begin{itemize}
		\item[(1)] $\left\{\left\|B_T\right\|_{H^0}\right\}_{T>0}$ satisfies an LDP on $\mbb R^+:=[0,+\infty)$ with the good rate function
		\begin{align*}
			J(y)=\frac{1}{\alpha^2}\inf_{z\in H^0,~\left\|Q^{\frac{1}{2}}z\right\|_{H^0}=y}\|z\|^2_{H^0},\qquad y\geq0.
		\end{align*}
		\item[(2)] 	For every $R>0$ and $\varepsilon>0$, there is some $T_0$ such that
		\begin{align}\label{sec3mass}
		\mbf P\left(\left\|u(T)\right\|^2_{H^0}\geq T^2R^2\right)\leq \exp\left\{-T\left(\inf_{y\geq R}J(y)-\varepsilon\right)\right\},\qquad\forall\quad T\geq T_0,
		\end{align}
		and $\inf\limits_{y\geq R}J(y)\in(0,+\infty)$. 
	\end{itemize}
\end{cor}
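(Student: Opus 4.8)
The plan is to deduce Corollary \ref{sec3cor} directly from Theorem \ref{LDP for BT} via the contraction principle (Lemma \ref{contraction}), and then read off the tail bound for the mass as an elementary consequence of the upper-bound half of the resulting LDP. For part (1), I would apply Lemma \ref{contraction} with $\mcal X=H^0$, $\mcal Y=\mbb R^+$, the good rate function $I$ from Theorem \ref{LDP for BT}, and the continuous map $f:H^0\to\mbb R^+$, $f(x)=\|x\|_{H^0}$. Continuity of $f$ is immediate from the reverse triangle inequality, and Lemma \ref{contraction}(a)--(b) then tell us that $\{\|B_T\|_{H^0}\}_{T>0}=\{f(B_T)\}_{T>0}$ satisfies an LDP on $\mbb R^+$ with good rate function
\begin{align*}
J(y)=\inf\{I(x):x\in H^0,\ \|x\|_{H^0}=y\}=\frac{1}{\alpha^2}\inf_{z\in H^0,\ \|Q^{1/2}z\|_{H^0}=y}\|z\|_{H^0}^2,
\end{align*}
where the second equality uses the explicit form of $I$: the constraint $x\in Q^{1/2}(H^0)$ is enforced automatically (otherwise $I(x)=+\infty$ and the point does not contribute to the infimum), and writing $x=Q^{1/2}z$ with $z=Q^{-1/2}x$ gives $I(x)=\frac{1}{\alpha^2}\|z\|_{H^0}^2$ and $\|x\|_{H^0}=\|Q^{1/2}z\|_{H^0}=y$. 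The injectivity hypothesis on $Q$ guarantees $Q^{-1/2}$ is well defined so that this reparametrisation is unambiguous.

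For part (2), I would first rewrite the event in terms of $B_T$: since $u(T)=TB_T$, we have $\{\|u(T)\|_{H^0}^2\ge T^2R^2\}=\{\|B_T\|_{H^0}\ge R\}=\{f(B_T)\in[R,\infty)\}$, and $[R,\infty)$ is a closed subset of $\mbb R^+$. Applying the upper bound (LDP2) from part (1) to this closed set yields
\begin{align*}
\limsup_{T\to\infty}\frac{1}{T}\ln\mbf P\left(\|u(T)\|_{H^0}^2\ge T^2R^2\right)\le-\inf_{y\ge R}J(y),
\end{align*}
which is precisely the statement that for every $\varepsilon>0$ there is $T_0$ with $\mbf P(\|u(T)\|_{H^0}^2\ge T^2R^2)\le\exp\{-T(\inf_{y\ge R}J(y)-\varepsilon)\}$ for all $T\ge T_0$. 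It then remains to check $\inf_{y\ge R}J(y)\in(0,+\infty)$. Finiteness is easy: pick any $z_0\in H^0$ with $\|Q^{1/2}z_0\|_{H^0}=y_0$ for some $y_0\ge R$ (e.g. a suitable multiple of an eigenvector $e_1$ of $Q$, using $\eta_1>0$), giving $J(y_0)<\infty$. For strict positivity, I would argue by the lower-bound LDP or directly: if $\inf_{y\ge R}J(y)=0$, then since $J$ is a good rate function the level set $J^{-1}(\{0\})$ is compact, and one shows $J(y)=0$ forces $y=0$ — indeed $J(y)=\frac{1}{\alpha^2}\inf\{\|z\|_{H^0}^2:\|Q^{1/2}z\|_{H^0}=y\}$ and by boundedness of $Q^{1/2}$, $\|Q^{1/2}z\|_{H^0}\le\|Q^{1/2}\|_{\mcal L(H^0)}\|z\|_{H^0}$, so $\|z\|_{H^0}\ge y/\|Q^{1/2}\|_{\mcal L(H^0)}$ on the constraint set, whence $J(y)\ge y^2/(\alpha^2\|Q^{1/2}\|_{\mcal L(H^0)}^2)>0$ for $y\ge R>0$; lower semicontinuity of $J$ plus this lower bound then give $\inf_{y\ge R}J(y)\ge R^2/(\alpha^2\|Q^{1/2}\|_{\mcal L(H^0)}^2)>0$.

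The only genuinely delicate point is verifying $\inf_{y\ge R}J(y)>0$, and more precisely that the infimum is attained or at least bounded below away from zero uniformly over $y\ge R$; the clean way around this is the explicit lower bound $J(y)\ge y^2/(\alpha^2\|Q^{1/2}\|_{\mcal L(H^0)}^2)$ just described, which sidesteps any compactness or attainment discussion and immediately gives $\inf_{y\ge R}J(y)\ge R^2/(\alpha^2\|Q^{1/2}\|_{\mcal L(H^0)}^2)$. Everything else — continuity of the norm, identification of the contracted rate function, and translating the LDP upper bound into the exponential tail estimate \eqref{sec3mass} — is routine and follows the template already used for $\{B_T\}_{T>0}$ in Theorem \ref{LDP for BT}.
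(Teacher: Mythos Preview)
Your proposal is correct and follows essentially the same route as the paper: apply the contraction principle with the norm map to get part (1), then read off \eqref{sec3mass} from the upper-bound LDP on the closed set $[R,\infty)$. The only (minor) difference is in the positivity argument for $\inf_{y\ge R}J(y)$: the paper argues by contradiction (if $J(y_0)=0$ then a minimizing sequence $z_n\to 0$ forces $y_0=0$ by continuity of $Q^{1/2}$) and then invokes the fact that a good rate function attains its infimum on closed sets, whereas your explicit quantitative bound $J(y)\ge y^2/(\alpha^2\|Q^{1/2}\|_{\mcal L(H^0)}^2)$ is more direct and avoids any attainment discussion.
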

\begin{proof}
	(1) Since the mapping $\|\cdot\|_{H^0}:H^0\to\mbb R^+$ is continuous, it follows from Lemma \ref{contraction} and Theorem \ref{LDP for BT} that $\left\{\left\|B_T\right\|_{H^0}\right\}_{T>0}$ satisfies an LDP on $\mbb R^+$ with the good rate function
	\begin{align*}
		J(y)=&\inf_{x\in H^0,~\|x\|_{H^0}=y}I(x)=\inf_{x\in Q^{\frac{1}{2}}(H^0),~\|x\|_{H^0}=y}I(x) \\
		=&\frac{1}{\alpha^2}\inf_{x\in Q^{\frac{1}{2}}(H^0),~\|x\|_{H^0}=y}\left\|Q^{-\frac{1}{2}}x\right\|^2_{H^0}\\
		=&\frac{1}{\alpha^2}\inf_{z\in H^0,~\left\|Q^{\frac{1}{2}}z\right\|_{H^0}=y}\|z\|^2_{H^0},
	\end{align*}
	where we have used the assumption that $Q$ is an injection. This proves the first conclusion.
	
	(2) Clearly, the set $\left\{z\in H^0,~\left\|Q^{\frac{1}{2}}z\right\|_{H^0}=y\right\}$ is nonempty for every $y\geq0$. Hence, $J(y)<+\infty$ for every $y\geq 0$. Accordingly, $\inf\limits_{y\geq R}J(y)<+\infty$ for each $R>0$. In addition, we claim $J(y)>0$ for each $y>0$. In fact, if for some $y_0>0$, $J(y_0)=0$, then there is a sequence $\{z_n\}_{n\in \mbb N}\subseteq H^0$ such that $\left\|Q^{\frac{1}{2}}z_n\right\|_{H^0}=y_0$ and $\lim\limits_{n\to\infty}\|z_n\|_{H^0}=0$. Noting that $Q^{\frac{1}{2}}$ is a continuous operator, then we have $y_0=\lim\limits_{n\to\infty}\left\|Q^{\frac{1}{2}}z_n\right\|_{H^0}=0$, which yields a contradiction. Hence, we prove the claim. Using the fact that  a good rate function can achieve its infimum on every nonempty closed set (see e.g.,\cite[Page 4]{Dembo}), we have that for each $R>0$, there is some $y_R\geq R$ such that $\inf\limits_{y\geq R}J(y)=J(y_R)>0$. It remains to prove \eqref{sec3mass}. Since $\left\{\left\|B_T\right\|_{H^0}\right\}_{T>0}$ satisfies the LDP with the rate function $J$, we obtain that for each fixed $R>0$,
	\begin{align*}
		\limsup_{T\to\infty}\frac{1}{T}\ln\mbf P\left(\frac{\|u(T)\|_{H^0}}{T}\geq R\right)\leq-\inf_{y\geq R}J(y).
	\end{align*}
The above formula implies that for every $\varepsilon>0$, there is a $T_0>0$ such that 
\begin{align*}
	\frac{1}{T}\ln\mbf P\left(\frac{\|u(T)\|_{H^0}}{T}\geq R\right)\leq-\inf_{y\geq R}J(y)+\varepsilon,\qquad \forall\quad T\geq T_0.
\end{align*}
Hence we have that
\begin{align*}
\mbf P\left(\|u(T)\|^2_{H^0}\geq T^2R^2\right)=\mbf P\left(\frac{\|u(T)\|_{H^0}}{T}\geq R\right)\leq \exp\left\{-T\left(\inf_{y\geq R}J(y)-\varepsilon\right)\right\},\qquad\forall\quad T\geq T_0.
\end{align*}
This completes the proof.
\end{proof}
\begin{rem}\label{sec3rem}
	For sufficiently large $L>0$, one can always find $R$ and $T$ such that $T^2R^2\leq L$. Then by \eqref{sec3rem} one has that  $\mbf P\left(\left\|u(T)\right\|^2_{H^0}\geq L\right)\leq\mbf P\left(\left\|u(T)\right\|^2_{H^0}\geq T^2R^2\right)\leq \exp\left\{-T\left(\inf_{y\geq R}J(y)-\varepsilon\right)\right\}$. This indicates that the probability of the tail event of the mass of \eqref{xde1} is exponentially small on a sufficiently large time.
\end{rem}

\section{LDP for the spatial spectral Galerkin approximation}\label{Sec4}
In the previous section, we derive the LDP of $\{B_T\}_{T>0}$ for the continuous system \eqref{xde1}. In order to obtain a valid approximation for the rate function $I$ of $\{B_T\}_{T>0}$, we apply the spatial spectral Galerkin method to \eqref{xde1}, and study the LDP of $\{B^M_T\}_{T>0}$ of spectral Galerkin approximation. Here, $B_T^M$ is a discrete approximation of $B_T$, which will be specified later. 

For $M\in \mbb N$, we define the finite dimensional subspace $H_M:=\text{span}\left\{e_1,e_2,\ldots,e_M\right\}$ of $(H^0,\langle\cdot,\cdot\rangle_{\mbb C})$ and the projection operator $P_M: H^0\to H_M$ by $P_M x=\sum_{k=1}^{M}\langle x,e_k\rangle_{\mbb C}e_k$ for each $x\in H^0$. 
Then $P_M$ is also a projection operator from $(U^0,\langle\cdot,\cdot\rangle_{\mbb R})$ onto $U_M$  such that $P_M x=\sum_{k=1}^{M}\langle x,e_k\rangle_{\mbb R}e_k$ for each $x\in U^0$. Denote $\Delta_M=\Delta P_M$. Using the above notations, we get the following spectral Galerkin approximation:
\begin{align}\label{Galerkin}
	d u^M(t)&=\bm{i}\Delta_M u^M(t)dt+\bm{i}\alpha P_Md W(t),\qquad t>0, \\
	u^M(0)&=P_Mu_0\in H_M.\nonumber
\end{align}
It is verified that \eqref{Galerkin} admits a unique mild solution on $H_M$ given by 
\begin{align}\label{sec4k1}
	u^M(t)=S_M(t)u^M(0)+\bm{i}\alpha\int_{0}^{t}S_M(t-s)P_Md W(s),
\end{align}
where $S_M(t)=e^{\bm{i}t\Delta_M}$ is the unitary $C_0$-group generated by $\bm{i}\Delta_M$. 

For the spatial discretization \eqref{Galerkin}, we define $B^M_T=\frac{u^M(T)}{T}$ which is viewed as a discrete approximation for $B_T$. In what follows, we study the LDP of $\{B^M_T\}_{T>0}$ and whether $\{u^M\}_{M\in\mbb N}$ can asymptotically preserve the LDP of $B_T$.
\subsection{LDP for $\{B^M_T\}_{T>0}$ }
Following the ideas of deriving the LDP of $\{B_T\}_{T>0}$, in this part, we give the LDP of $\{B^M_T\}_{T>0}$. For this end, we first consider the logarithmic moment generating function $\Lambda^M(\lambda)=\lim\limits_{T\to\infty}\frac{1}{T}\ln\mbf E\exp\left\{T\left\langle \lambda,B^M_T\right\rangle_\mbb R \right\}$, for each $\lambda\in H_M$. Then, we study the exponential tightness of $\{B^M_T\}_{T>0}$. Finally, by means of Theorem \ref{GE}, we obtain the LDP of $\{B^M_T\}_{T>0}$.   Hereafter we use the notation $K(a_1,\ldots,a_m)$ to denote some constant dependent on the parameters $a_1 ,\ldots,a_m$ but independent of  $T$ and $N$, which may vary from one line to another.
\begin{theo}\label{sec4tho4.1}
		For each fixed $M\in\mbb N$, $\left\{B^M_T\right\}_{T>0}$ satisfies an LDP on $H^0$ with  the good rate function $I^M(\cdot)$ given by 
	\begin{align} \label{sec4k2}
	I^M(x)=\begin{cases}
	\frac{1}{\alpha^2}\left\|Q_M^{-\frac{1}{2}}x\right\|_{H^0}^2, \qquad &\text{if}~x\in Q_M^{\frac{1}{2}}(H^0), \\
	+\infty, &\text{otherwise},
	\end{cases}
	\end{align}
	where $Q_M:=QP_M$ and $Q^{-\frac{1}{2}}_M$ is the pseudo inverse of  $Q_M^{\frac{1}{2}}$ on $H_M$, i.e., $Q_M^{-\frac{1}{2}}x=\underset{z}{argmin}\left\{\|z\|_{H^0}:\right.$ $\left.z\in H_M,Q^{\frac{1}{2}}_Mz=x\right\}$ for every $x\in H_M$.
\end{theo}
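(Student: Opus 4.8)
The plan is to run the three-step scheme of Theorem \ref{LDP for BT} on the finite dimensional space $H_M$ and then transport the resulting LDP to $H^0$ via Lemma \ref{subLDP}. Since $Q$ is diagonal in $\{e_k\}$, $Q_M=QP_M=P_MQ$ and $Q_M^{\frac12}=Q^{\frac12}P_M$, so $Q_M^{\frac12}(H^0)=Q^{\frac12}(H_M)\subseteq H_M$. Decomposing $S_M(t)=\cos(t\Delta_M)+\bm{i}\sin(t\Delta_M)$ as in Section \ref{Sec2}, one writes $u^M(t)=S_M(t)P_Mu_0-\alpha W^M_{\sin}(t)+\bm{i}\alpha W^M_{\cos}(t)$ with $W^M_{\sin}(t)=\int_0^t\sin((t-s)\Delta_M)P_M\ud W(s)$ and $W^M_{\cos}$ defined analogously; here $P_M\ud W$ has covariance $Q_M$. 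For each $\lambda\in H^0$, $\langle u^M(T),\lambda\rangle_{\mbb R}$ is Gaussian with mean $\langle S_M(T)P_Mu_0,\lambda\rangle_{\mbb R}$, which is bounded in $T$ because $S_M$ is unitary, and with variance equal to the $P_M$-truncation of \eqref{varut,h1}. Using \eqref{sinint}--\eqref{sincosint} with $\Delta_M$ in place of $\Delta$, and the bounds $\|\Delta_M^{-1}\|_{\mcal L(H_M)}=1$, $\|\sin(2T\Delta_M)\|_{\mcal L(H_M)}\le 1$, all oscillatory contributions are $O(1)$ in $T$, whence
\begin{align*}
\Lambda^M(\lambda')=\lim_{T\to\infty}\frac{1}{T}\left[\mbf E\langle u^M(T),\lambda\rangle_{\mbb R}+\tfrac12\mbf{Var}\langle u^M(T),\lambda\rangle_{\mbb R}\right]=\frac{\alpha^2}{4}\left\|Q_M^{\frac12}\lambda\right\|_{H^0}^2,
\end{align*}
which is finite valued on $H_M$ and Fr\'echet differentiable with derivative $\mcal D\Lambda^M(\lambda')(\cdot)=\frac{\alpha^2}{2}\langle Q_M\lambda,\cdot\rangle_{\mbb R}$.

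For the second step, since $H_M$ is finite dimensional the existence of $\Lambda^M$ already yields exponential tightness on $H_M$; alternatively one reproduces Step $2$ of Theorem \ref{LDP for BT} verbatim with the compact balls $K_L=\{f\in H_M:\|f\|_{H^0}\le L\}$, splitting $\mbf P(\|u^M(T)\|_{H^0}>LT)$ into the eventually null deterministic term and two Gaussian terms that are bounded by Markov's inequality together with Proposition \ref{Fernique}, using that the eigenvalues of $\mbf{Var}(W^M_{\sin}(T)/\sqrt T)$ on $U_M$ are strictly dominated by $\eta_1$. In either case Theorem \ref{GE} applies and gives that $\{B^M_T\}_{T>0}$ satisfies an LDP on $H_M$ with the convex good rate function $(\Lambda^M)^*$.

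The third step identifies $(\Lambda^M)^*$ and lifts the LDP to $H^0$. As in Step $3$ of Theorem \ref{LDP for BT}, the RKHS of $\mu_M=\mcal N(0,Q_M)$ on $H_M$ is $Q_M^{\frac12}(H^0)$ with norm $\|Q_M^{-\frac12}\cdot\|_{H^0}$, and the Hahn--Banach/Riesz argument shows $(\Lambda^M)^*(x)<\infty$ if and only if $x\in Q_M^{\frac12}(H^0)$, with value $\frac{1}{\alpha^2}\|Q_M^{-\frac12}x\|_{H^0}^2$ there; when some $\eta_k=0$ with $k\le M$ the kernel direction of $Q_M$ forces $(\Lambda^M)^*=+\infty$ on $H_M\setminus Q_M^{\frac12}(H^0)$, which is exactly why the pseudo inverse appears. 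Finally $H_M$ is a closed subspace of $H^0$ with the induced topology and $\mbf P(B^M_T\in H_M)=1$ for all $T>0$, so Lemma \ref{subLDP} upgrades this to an LDP on $H^0$ with rate function $(\Lambda^M)^*$ on $H_M$ and $+\infty$ on $H_M^c$; since $Q_M^{\frac12}(H^0)\subseteq H_M$, this coincides with the asserted $I^M$.

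All the computations above are routine $P_M$-truncations of those already carried out for Theorem \ref{LDP for BT}; the only genuinely new points are the bookkeeping of the pseudo inverse $Q_M^{-\frac12}$ when $Q_M$ fails to be injective on $H_M$, and the use of Lemma \ref{subLDP} to reconcile the finite dimensional LDP on $H_M$ with the ambient space $H^0$. I expect the main place requiring care to be getting the domain of $I^M$ and the identification $\mcal H_{\mu_M}=Q_M^{\frac12}(H^0)$ exactly right in the degenerate case, but I anticipate no real obstacle.
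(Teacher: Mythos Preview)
Your proposal is correct and follows essentially the same three-step route as the paper: compute $\Lambda^M(\lambda')=\frac{\alpha^2}{4}\|Q_M^{1/2}\lambda\|_{H^0}^2$ from the decomposition $u^M(T)=S_M(T)P_Mu_0-\alpha W^M_{\sin}(T)+\bm i\alpha W^M_{\cos}(T)$, establish exponential tightness on $H_M$, apply Theorem~\ref{GE}, identify $(\Lambda^M)^*$ via the RKHS argument, and lift to $H^0$ by Lemma~\ref{subLDP}. The only cosmetic difference is that the paper carries out the exponential tightness estimate explicitly with $K_L=\{f\in H_M:\|f\|_{H^0}\le L\}$ and Proposition~\ref{Fernique}, whereas you (correctly) note that finite dimensionality of $H_M$ makes this automatic---a shortcut the paper itself flags in Section~\ref{Sec3} but does not invoke here.
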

\begin{proof}
	Noting that $S_M(t)=\cos(t\Delta_M)+\bm{i}\sin(t\Delta_M)$, we have
	\begin{align}\label{sec4k3}
	u^M(T)&=S_M(T)u^M(0)-\alpha\int_{0}^{T}\sin((T-s)\Delta_M)P_Md W(s)+\bm{i}\alpha\int_{0}^{T}\cos((T-s)\Delta_M)P_Md W(s) \nonumber\\
	&=:S_M(T)u^M(0)-\alpha W^M_{\sin}(T)+\bm{i}\alpha W^M_{\cos}(T).
	\end{align}
	Notice that for each $T>0$, $W^M_{\sin}(T)$ is a  Gaussian random variable taking values on $(U_M,\langle\cdot,\cdot\rangle_\mbb R)$.
	By Proposition \ref{Cor}, the covariance operator $\mbf{Var}\left(W^M_{\sin}(T)\right)$ of $W^M_{\sin}(T)$ is
	\begin{align}\label{sec4k4}
		\mbf{Var}(W^M_{\sin}(T))&=\int_{0}^{T}\sin^2((T-s)\Delta_M)Q_Mds \nonumber\\
		&=\frac{Q_M}{2}\int_{0}^{T}\left[I-\cos(2(T-s)\Delta_M)\right]ds\nonumber\\
		&=\frac{TQ_M}{2}-\frac{Q_M\Delta_M^{-1}}{4}\sin(2T\Delta_M),
	\end{align}
	where $Q_M=QP_M$.
	Similarly, we have that
	\begin{align}\label{sec4k5}
		W^M_{\cos}(T)\sim\mcal N(0,\mbf{Var}(W^M_{\cos}(T)))\qquad \text{on}~U_M
	\end{align}
	with $\mbf{Var}(W^M_{\cos}(T))=\frac{TQ_M}{2}+\frac{1}{4}Q_M\Delta_M^{-1}\sin(2T\Delta_M)$. And the correlation operator $\mbf{Cor}\left(W^M_{\sin}(T),W^M_{\cos}(T)\right)$ is
	\begin{align} \label{sec4k6}
		\mbf{Cor}\left(W^M_{\sin}(T),W^M_{\cos}(T)\right)=\frac{Q_M\Delta_M^{-1}}{4}\left[I-\cos(2T\Delta_M)\right].
	\end{align}
For each $\lambda\in H_M$, we write it as $\lambda=\Re\lambda+\bm{i}\Im\lambda$ with $\Re\lambda$, $\Im\lambda\in U_M$. Then by \eqref{sec4k3}, 
\begin{align}\label{sec4k7}
	\langle u^M(T),\lambda\rangle_\mbb R=\langle S_M(T)u^M(0),\lambda\rangle_\mbb R-\alpha\left\langle W^M_{\sin}(T),\Re\lambda\right\rangle_\mbb R+\alpha\left\langle W^M_{\cos}(T),\Im\lambda\right\rangle_\mbb R.
\end{align}
Hence, we obtain
\begin{align}\label{sec4k8}
	\left|\mbf E\langle u^M(T),\lambda\rangle_\mbb R\right|=\left|\langle S_M(T)u^M(0),\lambda\rangle_\mbb R\right|\leq K(\lambda).
\end{align}
It follows from \eqref{sec4k4}, \eqref{sec4k5}, \eqref{sec4k6} and \eqref{sec4k7} that
\begin{align}\label{sec4k9}
	\mbf{Var}\langle u^M(T),\lambda\rangle_\mbb R=&\alpha^2\mbf{Var}\left\langle W^M_{\sin}(T),\Re\lambda\right\rangle_\mbb R+\alpha^2\mbf{Var}\left\langle W^M_{\cos}(T),\Im\lambda\right\rangle_\mbb R\nonumber\\
	&-2\alpha^2\mbf{Var}\left(\left\langle W^M_{\sin}(T),\Re\lambda\right\rangle_\mbb R,\left\langle W^M_{\cos}(T),\Im\lambda\right\rangle_\mbb R\right)\nonumber\\
	=&\alpha^2\left\langle \mbf{Var}(W^M_{\sin}(T))\Re\lambda,\Re\lambda\right\rangle_\mbb R+\alpha^2\left\langle \mbf{Var}(W^M_{\cos}(T))\Im\lambda,\Im\lambda\right\rangle_\mbb R\nonumber\\
	&-2\alpha^2\left\langle \mbf{Cor}(W^M_{\sin}(T),(W^M_{\cos}(T))\Re\lambda,\Im\lambda\right\rangle_\mbb R\nonumber\\
	=&\frac{\alpha^2T}{2}\left\langle Q_M\Re \lambda,\Re\lambda
	\right\rangle_\mbb R+\frac{\alpha^2T}{2}\left\langle Q_M\Im \lambda,\Im\lambda
	\right\rangle_\mbb R-\frac{\alpha^2}{4}\left\langle Q_M\Delta_M^{-1}\sin(2T\Delta_M)\Re \lambda,\Re\lambda
	\right\rangle_\mbb R\nonumber\\
	&+\frac{\alpha^2}{4}\left\langle Q_M\Delta_M^{-1}\sin(2T\Delta_M)\Im \lambda,\Im\lambda
	\right\rangle_\mbb R-\frac{\alpha^2}{2}\left\langle Q_M\Delta_M^{-1}\left(I-\cos(2T\Delta_M)\right)\Re \lambda,\Im\lambda
	\right\rangle_\mbb R\nonumber\\
	=:&\frac{\alpha^2T}{2}\left\langle Q_M\Re \lambda,\Re\lambda
	\right\rangle_\mbb R+\frac{\alpha^2T}{2}\left\langle Q_M\Im \lambda,\Im\lambda
	\right\rangle_\mbb R+R(T)
\end{align}
with
$|R(T)|\leq K(M,\lambda)$.
Using \eqref{sec4k8} and \eqref{sec4k9}, we have that, for every $\lambda\in H_M$,
\begin{align*}
	\Lambda^M(\lambda)=&\lim_{T\to \infty}\frac{1}{T}\ln\mbf Ee^{T\left\langle B^M_T,\lambda\right\rangle_\mbb R}=\lim_{T\to \infty}\frac{1}{T}\ln\mbf Ee^{\left\langle u^M(T),\lambda\right\rangle_\mbb R}\nonumber\\
	=&\lim_{T\to \infty}\frac{1}{T}\left(\mbf E\left\langle u^M(T),\lambda\right\rangle_\mbb R+\frac{1}{2}\mbf{Var}\left\langle u^M(T),\lambda\right\rangle_\mbb R\right)\nonumber\\
	=&\frac{1}{2}\left(\frac{\alpha^2}{2}\left\langle Q_M\Re \lambda,\Re\lambda
	\right\rangle_\mbb R+\frac{\alpha^2}{2}\left\langle Q_M\Im \lambda,\Im\lambda
	\right\rangle_\mbb R\right)\nonumber\\
	=&\frac{\alpha^2}{4}\left\|Q_M^{\frac{1}{2}}\lambda\right\|^2_{H^0}.\nonumber
\end{align*}

Analogous to the proof  of \eqref{sec3k1}, we have
\begin{align*}
	(\Lambda^M)^*(x)=\underset{\lambda\in H_M}{\sup}\left\{\langle\lambda,x\rangle-\Lambda^M(\lambda)\right\}=\begin{cases}
	\frac{1}{\alpha^2}\left\|Q_M^{-\frac{1}{2}}x\right\|_{H^0}^2, \qquad &\text{if}~x\in Q_M^{\frac{1}{2}}(H_M), \\
	+\infty &\text{otherwise}.
	\end{cases}
\end{align*}

Next, we show that $\left\{B^M_T\right\}_{T>0}$ is exponentially tight.  Define $K_L=\left\{f\in H_M\left|\right.\|f\|_{H^0}\leq L\right\}$, then $K_L$ is the compact subset of $H_M$. It follows from \eqref{sec4k3} that 
\begin{align}
&\mbf P\left(B^M_T\in K_L^c)\right)=\mbf P\left(\|u^M(T)\|_{H^0}>LT\right)\nonumber\\
\leq&\mbf P\left(\|S_M(T)u^M_0\|_{H^0}>\frac{TL}{3}\right)+\mbf P\left(\alpha\|W^M_{\sin}(T)\|_{U^0}>\frac{TL}{3}\right)+\mbf P\left(\alpha\|W^M_{\cos}(T)\|_{U^0}>\frac{TL}{3}\right). \label{sec4k10}
\end{align}
By \eqref{sec4k4}, we have
	\begin{align}\label{sec4k11}
\frac{W^M_{\sin}(T)}{\sqrt{T}}\sim\mcal N\left(0,\left(\frac{I}{2}-\frac{\Delta_M^{-1}\sin(2T\Delta_M)}{4T}\right)Q_M\right)\qquad \text{on}~U_M.
\end{align}
Hence, we obtain
\begin{align*}
	\lambda_k\left(\mbf{Var}\left(\frac{W^M_{\sin}(T)}{\sqrt{T}}\right)\right)=\left(\frac{1}{2}-\frac{\sin(2Tk^2)}{4Tk^2}\right)\eta_k=\frac{1}{2}\left(1-\frac{\sin(2Tk^2)}{2Tk^2}\right)\eta_k<\eta_k,\qquad k=1,2,\dots,M.
\end{align*}
For every $0<\varepsilon<\frac{1}{2\eta_1}$, it follows from Proposition \ref{Fernique} that
\begin{align*}
		\mbf E\exp\left\{{\varepsilon\left\|\frac{W^M_{\sin}(T)}{\sqrt{T}}\right\|_{U^0}^2}\right\}&=\left[\det\left(I-2\varepsilon\mbf{Var}\left(\frac{W^M_{\sin}(T)}{\sqrt{T}}\right)\right)\right]^{-\frac{1}{2}}<\left[\det(I-2\varepsilon Q_M)\right]^{-\frac{1}{2}}=C(\varepsilon,Q_M).
\end{align*}
The above formula yields 
\begin{align}\label{sec4k12}
	\mbf P\left(\alpha\|W^M_{\sin}(T)\|_{U^0}>\frac{TL}{3}\right)&=P\left(\exp\left\{\varepsilon\left\|\frac{W^M_{\sin}(T)}{\sqrt{T}}\right\|^2_{U^0}\right\}>\exp\left\{\frac{
		\varepsilon TL^2}{9\alpha^2}\right\}\right)\nonumber\\ 
	&\leq e^{-\frac{\varepsilon TL^2}{9\alpha^2}}\mbf E\exp\left\{{\varepsilon\left\|\frac{W^M_{\sin}(T)}{\sqrt{T}}\right\|_{U^0}^2}\right\}\leq e^{-\frac{\varepsilon TL^2}{9\alpha^2}}C(\varepsilon,Q_M).
\end{align}
Similarly, one has 
\begin{align}\label{sec4k13}
\mbf P\left(\alpha\|W^M_{\cos}(T)\|_{U^0}>\frac{TL}{3}\right)\leq e^{-\frac{\varepsilon TL^2}{9\alpha^2}}C(\varepsilon,Q_M).
\end{align}
According to Proposition \ref{limsup}, \eqref{sec4k12} and \eqref{sec4k13}, we have
\begin{align*}
	\limsup_{T\to\infty}\frac{1}{T}\ln\mbf P\left(B^M_T\in K_L^c\right)\leq-\frac{\varepsilon L^2}{9\alpha^2},\qquad 0<\varepsilon<\frac{1}{2\eta_1},
\end{align*}
where we have used the fact that $\mbf P\left(\|S_M(T)u^M_0\|_{H^0}>\frac{TL}{3}\right)=0$ for sufficiently large $T$. 
Then, we obtain 
\begin{align*}
	\lim_{L\to \infty}\limsup_{T\to\infty}\frac{1}{T}\ln\mbf P\left(B^M_T\in K_L^c\right)=-\infty,
\end{align*} 
which implies the exponential tightness of $\left\{B^M_T\right\}_{T>0}$.

Notice that $\Lambda^M(\cdot)$ is Fr\'echet differentiable and $\mcal D\Lambda^M(\lambda)(\cdot)=\frac{\alpha^2}{2}\left\langle Q_M\lambda,\cdot \right\rangle$ for each $\lambda\in H_M$. Then it follows from Theorem \ref{GE} that $\left\{B^M_T\right\}_{T>0}$ satisfies an LDP on $H_M$ with  the good rate function
\begin{align*}
	\widetilde I^M(x)=(\Lambda^M)^*(x)=\begin{cases}
	\frac{1}{\alpha^2}\left\|Q_M^{-\frac{1}{2}}x\right\|_{H^0}^2, \qquad &\text{if}~x\in Q_M^{\frac{1}{2}}(H_M), \\
	+\infty, &x\in H_M\setminus Q_M^{\frac{1}{2}}(H_M).
	\end{cases}
\end{align*}
Clearly, $H_M$ is the closed subspace of $H^0$ and for each $T>0$, $\mbf P(B^M_T\in H^M)=1$. Thus, using Lemma \ref{subLDP} and the fact $Q_M^{\frac{1}{2}}(H_M)=Q_M^{\frac{1}{2}}(H^0)$, we conclude that $\left\{B^M_T\right\}_{T>0}$ satisfies an LDP on $H^0$ with  the good rate function
\begin{align*}
I^M(x)=\begin{cases}
\frac{1}{\alpha^2}\left\|Q_M^{-\frac{1}{2}}x\right\|_{H^0}^2, \qquad &\text{if}~x\in Q_M^{\frac{1}{2}}(H^0), \\
+\infty, &\text{otherwise}.
\end{cases}
\end{align*}
\end{proof}

\subsection{Weakly asymptotical preservation for the LDP of $\{B_T\}_{T>0}$}
In the last subsection, we obtain the LDP for $\{B^M_T\}_{T>0}$ of the spectral Galerkin approximation $\{u^M(T)\}_{T>0}$. It is natural to consider whether $I^M$ converges to $I$ pointwise   as $M$ tends to infinity. In \cite{LDPosc}, authors give the definition of \emph{asymptotical preservation
	 for the LDP} of the original system, i.e., the discrete rate functions of numerical methods converge to that of the original system in the pointwise sense. In our case, since generally  $Q_M^{\frac{1}{2}}(H^0)\subsetneqq Q^{\frac{1}{2}}(H^0)$,  it can not be assured that $I^M$ converges to $I$ pointwise. However, the sequence  $\left\{Q_M^{\frac{1}{2}}(H^0)\right\}_{M\in\mbb N}$ of sets converges to $Q^{\frac{1}{2}}(H^0)$ by the fact $\lim\limits_{M\to\infty}Q_M^{\frac{1}{2}} x=Q^{\frac{1}{2}}x$ for each $x\in H^0$. It is hoped that $I^M$ is a good approximation of $I$ when $M$ is large enough. Thus, we give the following definition.

\begin{Def}\label{Def4.2}
 	For a spatial semi-discretization $\{u^M\}_{M\in\mbb N}$  of \eqref{xde1}, denote $B^M_T=\frac{u^M(T)}{T}$ . Assume that $\{B^M_T\}_{T>0}$  satisfies an LDP on $H^0$ with the rate function $I^M$ for all sufficiently large $M$.  Then we say that $\{u^M\}_{M\in\mbb N}$  weakly asymptotically preserves the LDP of $\{B_T\}_{T>0}$  if for each $x\in Q^{\frac{1}{2}}(H^0)$ and $\varepsilon>0$, there exist $x_0\in H^0$ and $M\in\mbb N$ such that
 	\begin{align}\label{wasym}
 	\left\|x-x_0\right\|_{H^0}<\varepsilon,\qquad \left|I(x)-I^M(x_0)\right|<\varepsilon,
 	\end{align}
 	where $I$ is the rate function of $\{B_T\}_{T>0}$.
 \end{Def}


\begin{theo}\label{sec4tho4.4}
	For the spectral Galerkin approximation \eqref{Galerkin}, $\{u^M\}_{M\in\mbb N}$ weakly asymptotically preserves the LDP of $\{B_T\}_{T>0}$, i.e., \eqref{wasym} holds.
\end{theo}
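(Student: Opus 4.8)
The plan is to exhibit, for every $x\in Q^{\frac{1}{2}}(H^0)$ and every $\varepsilon>0$, the approximant $x_0=P_Mx$ with $M$ chosen large enough, and to verify both inequalities in \eqref{wasym} by exploiting that $Q$, and hence $Q_M=QP_M$, is diagonal in the orthonormal basis $\{e_k\}_{k\in\mbb N}$. Since $Qe_k=\eta_ke_k$, one has $Q_Me_k=\eta_ke_k$ for $k\le M$ and $Q_Me_k=0$ for $k>M$, so that $Q_M^{\frac{1}{2}}(H^0)=\mathrm{span}\{e_k:\ k\le M,\ \eta_k>0\}\subseteq H_M$, and on this range the pseudo-inverse acts diagonally by $Q_M^{-\frac{1}{2}}z=\sum_{k\le M,\,\eta_k>0}\eta_k^{-\frac{1}{2}}\langle z,e_k\rangle_{\mbb C}e_k$. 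The analogous diagonal description holds for $Q^{-\frac{1}{2}}$ on $Q^{\frac{1}{2}}(H^0)$.

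The first point to record is that membership $x\in Q^{\frac{1}{2}}(H^0)$ forces $\langle x,e_k\rangle_{\mbb C}=0$ whenever $\eta_k=0$ and, crucially, that $I(x)=\tfrac{1}{\alpha^2}\|Q^{-\frac{1}{2}}x\|_{H^0}^2=\tfrac{1}{\alpha^2}\sum_{k:\,\eta_k>0}\eta_k^{-1}|\langle x,e_k\rangle_{\mbb C}|^2<\infty$. Consequently $P_Mx\in Q_M^{\frac{1}{2}}(H^0)$ for every $M$, so Theorem \ref{sec4tho4.1} assigns it a finite value, and a direct computation gives the partial sum
\[
I^M(P_Mx)=\frac{1}{\alpha^2}\big\|Q_M^{-\frac{1}{2}}P_Mx\big\|_{H^0}^2=\frac{1}{\alpha^2}\sum_{k\le M,\,\eta_k>0}\eta_k^{-1}|\langle x,e_k\rangle_{\mbb C}|^2 .
\]

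Because the summands are nonnegative and the full series converges to $\alpha^2 I(x)<\infty$, its partial sums increase to $I(x)$; hence there is $M_1$ with $|I(x)-I^M(P_Mx)|<\varepsilon$ for all $M\ge M_1$. Since $\{e_k\}_{k\in\mbb N}$ is an orthonormal basis of $H^0$, the projections $P_M$ converge strongly to the identity, so $\|x-P_Mx\|_{H^0}\to0$ and there is $M_2$ with $\|x-P_Mx\|_{H^0}<\varepsilon$ for all $M\ge M_2$. Taking $M=\max\{M_1,M_2\}$ and $x_0=P_Mx$ yields \eqref{wasym}, which is the assertion of the theorem.

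The argument is essentially bookkeeping; the only delicate point is the treatment of the (possibly nontrivial) kernel directions of $Q_M$ and $Q$ through the pseudo-inverses, together with the observation that the hypothesis $x\in Q^{\frac{1}{2}}(H^0)$ is exactly the square-summability of $\{\eta_k^{-\frac{1}{2}}\langle x,e_k\rangle_{\mbb C}\}$ that forces the tail $\sum_{k>M}\eta_k^{-1}|\langle x,e_k\rangle_{\mbb C}|^2$ to vanish — which is precisely what makes $I^M(P_Mx)$ converge to $I(x)$ rather than merely remain bounded.
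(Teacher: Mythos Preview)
Your proof is correct and coincides with the paper's argument in substance: the paper's approximant $x_M:=Q_M^{1/2}Q^{-1/2}x$ equals your $P_Mx$ (since $Q_M^{1/2}=P_MQ^{1/2}$ in the shared eigenbasis and $Q^{1/2}Q^{-1/2}x=x$ on the range), and both proofs then invoke the strong convergence $P_M\to I$. The only difference is presentational: the paper splits into the cases where $Q$ has finite rank (so $I^M=I$ for $M$ large) versus $Q$ injective, whereas you treat both simultaneously by restricting sums to the indices with $\eta_k>0$---a mildly cleaner packaging of the same diagonal computation.
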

\begin{proof}
	This problem is discussed in the following two cases.
	
	\emph{Case 1: There are infinitely many $0$ in $\{\eta_k\}_{k\in\mbb N}$, i.e., for some $l\in \mbb N$, $\eta_{l}>\eta_{l+1}=\eta_{l+2}=\cdots=0$.} \\
	For this case, $Q$ degenerates to a finite-rank operator. If $M\geq l$, then $Q_M=Q$. Hence, it holds that $I^M(x)=I(x)$ for every $x\in H^0$, which implies \eqref{wasym}. We say that $\{u^M\}_{M\in\mbb N}$  exactly preserves the LDP of $\{B_T\}_{T>0}$ for this case (see \cite[Definition 4.1]{LDPosc}).
	
	\emph{Case 2: There are finitely many $0$ in $\{\eta_k\}_{k\in\mbb N}$.} \\
	Notice that for each finite $M\in\mbb N$, $\eta_{1}\geq\eta_2\geq\cdots\geq\eta_M>0$. 
	We denote $y=Q^{-\frac{1}{2}}x$ and
	define $x_M:=Q^{\frac{1}{2}}_My$. Further, we have
	\begin{align*}
		Q_M^{-\frac{1}{2}}x_M=&\underset{z}{argmin}\left\{\|z\|_{H^0}:~z\in H_M,~ Q_M^{\frac{1}{2}}z=x_M\right\}\\
		=&\underset{z}{argmin}\left\{\|z\|_{H^0}:~z\in  H_M,~ Q^{\frac{1}{2}}z=Q^{\frac{1}{2}}P_My\right\}\\
		=&\underset{z}{argmin}\left\{\|z\|_{H^0}:~z\in  H_M,~ \sqrt{\eta_k}\langle z,e_k\rangle_\mbb C=\sqrt{\eta_k}\langle y,e_k\rangle_\mbb C,~k=1,2\dots,M\right\}\\
	=&P_My.
	\end{align*}
The above formula yields
\begin{align}\label{sec4k14}
	\lim_{M\to\infty}\left|I^M(x_M)-I(x)\right|=\frac{1}{\alpha^2}\lim_{M\to\infty}\left|\|P_My\|^2_{H^0}-\|y\|^2_{H^0}\right|=0.
\end{align}
In addition, it holds that
\begin{align}\label{sec4k15}
	\lim_{M\to\infty}x_M=\lim_{M\to\infty}Q_M^{\frac{1}{2}}y=\lim_{M\to\infty}P_MQ^{\frac{1}{2}}y=Q^{\frac{1}{2}}y=x.
\end{align}
Thus, it follows from \eqref{sec4k14} and \eqref{sec4k15} that for each $x\in Q^{\frac{1}{2}}(H^0)$ and $\varepsilon>0$, there exist sufficiently large $M$ and $x_0=Q_M^{\frac{1}{2}}\left(Q^{-\frac{1}{2}}x\right)$ such that \eqref{wasym} holds. 

Combining Case 1 and Case 2, we complete the proof.
\end{proof}

\begin{rem}
	As is seen in  the proof of Theorem \ref{sec4tho4.4}, for every $x\in Q^{\frac{1}{2}}(H^0)$ and sufficiently large $M$, $I^M(Q_M^{\frac{1}{2}}Q^{-\frac{1}{2}}x)$ is a
	 good approximation of $I(x)$.  
\end{rem}

\section{LDP by spatio-temporal full discretization}\label{Sec5}
In this section, we investigate the LDP for the full discretizations, spatially by the  spectral Galerkin method and temporally by the symplectic methods or non-symplectic ones. We show that the full discretization weakly asymptotically preserves the LDP of $\{B_T\}_{T>0}$ when using a symplectic method in temporal  direction, while it does not share this property for a temporal non-symplectic method. These results indicate that the  modified rate function of the full discretization, based on the spatial spectral Galerkin method  and a temporal symplectic method, is a good approximation of $I$.    
\subsection{Full discretization}
Since the spectral Galerkin approximation $\{u^M(t)\}_{t\geq 0}$ takes values in $H_M$, it holds that $u^M(t)=\sum_{k=1}^{M}\left\langle u^M(t),e_k\right\rangle_\mbb Ce_k$. Denote $U^M(t)=\left(\left\langle u^M(t),e_1\right\rangle_\mbb C,\left\langle u^M(t),e_2\right\rangle_\mbb C\right.$ $,\left.\cdots,\left\langle u^M(t),e_M\right\rangle_\mbb C\right)^\top$. Let $U^{M,k}(t)$ be the $k$th component of $U^M(t)$. It follows from  $\eqref{Galerkin}$ that
\begin{align*}
	\ud U^{M,k}(t)=-\bm{i}k^2U^{M,k}(t)\ud t+\bm{i}\alpha\sqrt{\eta_k}\ud  \beta_k(t),\qquad k=1,2,\ldots,M.
\end{align*}
Then, we obtain a $\mbb C^M$-valued SDE
\begin{align*}
	\ud U^M(t)=-\bm{i}\mcal MU^M(t)\ud t+\bm{i}\alpha\mcal Q\ud \beta(t),
\end{align*} 
where $\mcal M=\text{diag}\left(1,2^2,\ldots,M^2\right)\in \mbb R^{M\times M}$, $\mcal Q=\text{diag}\left(\sqrt{\eta_{l}},\sqrt{\eta_{2}},\ldots,\sqrt{\eta_{M}}\right)\in \mbb R^{M\times M}$, and $\beta(t)=\left(\beta_1(t),\beta_2(t),\ldots,\beta_M(t)\right)^\top\in\mbb R^M$. Further,  using the notation $U^M(t)=P^M(t)+\bm{i}Q^M(t)$ with $P^M(t)=\Re U^M(t)$ and $Q^M(t)=\Im U^M(t)$, we obtain a $2M$-dimensional stochastic Hamiltonian system 
\begin{align}\label{SHS}
	\ud P^M(t)&=\mcal MQ^M(t)\ud t,\nonumber\\
	\ud Q^M(t)&=-\mcal MP^M(t)\ud t+\alpha\mcal Q\ud \beta(t),
\end{align}
which is equivalent to the system \eqref{Galerkin} with $\left\langle u^M(t),e_k\right\rangle_\mbb C=P^{M,k}(t)+\bm{i} Q^{M,k}(t)$, where $P^{M,k}$ and $Q^{M,k}$ are the $k$th arguments of $P^M$ and $Q^M$, respectively. Hence, in order to obtain the numerical method for \eqref{Galerkin}, we only need to consider discretizing the equivalent system \eqref{SHS}.

Denote by $\left\{(p^M_n,q^M_n)\right\}_{n\in\mbb N}$  the numerical approximation of $\left\{(P^M(t),Q^M(t))\right\}_{t\geq0}$.  Let
$F$ be the linear function from $\mbb C^M$ to $H^0$ defined by 
\begin{align}\label{sec5F}
F(z)=\sum_{k=1}^{M}z_ke_k,\qquad \forall \quad z=(z_1,z_2,\ldots,z_M)\in \mbb C^M.
\end{align}
Then we obtain the numerical solution $\{u^M_n\}_{n\in\mbb N}$ with $u^M_n:=F(p^M_n+\bm{i}q^M_n)$.
Further, we define $B^M_N=\frac{u^M_N}{N\tau}$  (see \cite{LDPosc}), where $\tau$ is the temporal stepsize. Then $B^M_N$ is a discrete approximation of $B_T$.  To give the LDP for $\{B^M_N\}_{N\in\mbb N}$, our idea is to first investigate the LDP of $\{A^M_N\}_{N\in\mbb N}$, where $A^M_N=\frac{p^M_N+\bm{i}q^M_N}{N\tau}.
$ Then noting that $B^M_N=F(A^M_N)$,
 combining the LDP of $\{A^M_N\}_{N\in\mbb N}$ on $\mbb C^M$ and the contraction principle (Lemma \ref{contraction}), we derive the LDP of $\{B^M_N\}_{N\in\mbb N}$. More precisely, we divide \eqref{SHS}  into the following $M$ subsystems
\begin{align}\label{subSHS}
	\ud \begin{pmatrix}
	P^{M,k}(t)\\
	Q^{M,k}(t)
	\end{pmatrix}=k^2
	\begin{pmatrix}
	0&1\\
	-1&0
	\end{pmatrix}
	\begin{pmatrix}
	P^{M,k}(t)\\
	Q^{M,k}(t)
	\end{pmatrix}\ud t+\alpha_k
	\begin{pmatrix}
	0\\
	1
	\end{pmatrix}\ud\beta_k(t),\qquad k=1,2,\ldots,M,
\end{align}  
where $\alpha_k=\alpha\sqrt{\eta_k}$, $k=1,2,\ldots,M$.
For each $k\in\{1,2,\ldots,M\}$,  we consider the general numerical method in the following form
\begin{align}\label{timemethod}
	\left(\begin{array}{c}
p^{M,k}_{n+1}\\\\
q^{M,k}_{n+1}
\end{array}\right)=
\left(
	\begin{array}{cc}
	a_{11}(k^2\tau)&a_{12}(k^2\tau)\\
	\\
	a_{21}(k^2\tau)&a_{22}(k^2\tau)
	\end{array}\right)
	\left(\begin{array}{cc}
	p^{M,k}_n\\\\
	q^{M,k}_n
	\end{array}\right)
	+\alpha_k
	\left(\begin{array}{cc}
	b_1(k^2\tau)\\\\
	b_2(k^2\tau)
	\end{array}\right)\delta \beta_{k,n},
\end{align}
where $\delta \beta_{k,n}=\beta_{k}(t_{n+1})-\beta_{k}(t_n)$ with $t_n=n\tau$, $n=1,2,\ldots$, and functions $a_{ij},\,b_i: (0,\infty)\to\mbb R$, $i,j=1,2$ are  determined by a concrete method. In addition, we require $b_1^2(h)+b_2^2(h)\neq 0$ for all sufficiently small $h$. Hence, we finally obtain the numerical solution $\left\{(p^M_n,q^M_n)\right\}_{n\in\mbb N}$ generated by \eqref{timemethod}, with $(p^{M,k}_n,q^{M,k}_n)$ being the $k$th component of $(p^M_n,q^M_n)$, $n=1,2,\dots$. By defining functions
\begin{align}
A(h):=\left(
\begin{array}{cc}
a_{11}(h)&a_{12}(h)\\
\\
a_{21}(h)&a_{22}(h)
\end{array}\right),\qquad B(h):=	\left(\begin{array}{cc}
b_1(h)\\\\
b_2(h)
\end{array}\right), \qquad\forall\quad h>0,
\end{align}
we rewrite \eqref{timemethod} as
\begin{align}\label{sec5recur}
\left(\begin{array}{c}
p^{M,k}_{n+1}\\\\
q^{M,k}_{n+1}
\end{array}\right)=
A(k^2\tau)\left(\begin{array}{cc}
p^{M,k}_n\\\\
q^{M,k}_n
\end{array}\right)
+\alpha_k
B(k^2\tau)\delta \beta_{k,n}, \qquad n=0,1,2\ldots
\end{align}
with $p^{M,k}_0+\bm{i}q^{M,k}_0=\left\langle u^M(0),e_k\right\rangle_\mbb C$.

Next we introduce some concrete temporal discretizations   taking the form \eqref{sec5recur}. 
\begin{ex}[Midpoint Scheme]
	Applying midpoint scheme to \eqref{Galerkin} yields
	\begin{align*}
	u^M_{n+1}=u^M_n+\frac{1}{2}\bm{i}\tau\Delta_M \left(u^M_{n}+u^M_{n+1}\right)+\bm{i}\alpha P_M\delta W_n,\qquad n=0,1,2\ldots,
	\end{align*}
with
	\begin{align*}
	A^1(h):=\frac{1}{4+h^2}\left(
	\begin{array}{cc}
	4-h^2&4h\\
	-4h&4-h^2
	\end{array}\right),\qquad B^1(h):=\frac{2}{4+h^2}	\left(\begin{array}{cc}
	h\\
	2
	\end{array}\right), \qquad\forall\quad h>0.
	\end{align*}
	Here $\delta W_n:=W(t_{n+1})-W(t_n)$.
\end{ex}
\begin{ex}[Exponential Euler Method] The exponential Euler method for \eqref{Galerkin} is
	\begin{align*}
	u^M_{n+1}=S_M(\tau)u^M_n+\bm{i}\alpha S_M(\tau)P_M\delta W_{n},\qquad n=0,1,2\ldots,
	\end{align*}
	with
	\begin{align*}
	A^2(h):=\left(
	\begin{array}{cc}
	\cos(h)&\sin(h)\\
	-\sin(h)&\cos(h)
	\end{array}\right),\qquad B^2(h):=	\left(\begin{array}{cc}
	\sin(h)\\
	\cos(h)
	\end{array}\right), \qquad\forall\quad h>0.
	\end{align*}
\end{ex}
\begin{ex}[Backward Euler--Maruyama Method] The backward Euler--Maruyama method for \eqref{Galerkin} reads
	\begin{align*}
	u^M_{n+1}=u^M_n+\bm{i}\tau\Delta_M u^M_{n+1}+\bm{i}\alpha P_M\delta W_n,\qquad n=0,1,2\ldots,
	\end{align*}
	 with
	\begin{align*}
	A^3(h):=\frac{1}{1+h^2}\left(
	\begin{array}{cc}
	1&h\\
	-h&1
	\end{array}\right),\qquad B^3(h):=\frac{1}{1+h^2}	\left(\begin{array}{cc}
	h\\
	1
	\end{array}\right), \qquad\forall\quad h>0.
	\end{align*}
\end{ex}

Next, we give our main assumptions on functions $A$ and $B$, which will be used to derive the LDP of $\left\{B^M_N\right\}_{N\in\mbb N}$.
\begin{assum}\label{assum1}
There is some $h_1>0$ such that
\begin{align*}
4\det(A(h))-({\rm tr}(A(h)))^2>0,\qquad \forall \quad h<h_1,
\end{align*}
where ${\rm tr}(A)$ and $\det(A)$ denote the trace and the determinant of $A$, respectively.	
\end{assum} 
We will use \textbf{Assumption \ref{assum1}} to give the general expression of the method \eqref{timemethod}, following the idea of \cite{LDPosc}.
\begin{assum}\label{assum2}
There is some $h_2>0$ such that for all $h<h_2$, $\det(A(h))=1$.
\end{assum}
\noindent One can show that  the numerical  method  generated by \eqref{timemethod} is symplectic if and only if \textbf{Assumption \ref{assum2}} holds. In fact,  $\left\{(p^M_n,q^M_n)\right\}_{n\in\mbb N}$ generated by \eqref{timemethod} is symplectic for all sufficiently small $\tau>0$ if and only if for all sufficiently small $\tau>0$,
$dp^M_{n+1}\wedge dq^M_{n+1}=dp^M_{n}\wedge dq^M_{n}$, i.e.,
\begin{align*}
\sum_{k=1}^{M}dp^{M,k}_{n+1}\wedge dq^{M,k}_{n+1}=\sum_{k=1}^{M}dp^{M,k}_{n}\wedge dq^{M,k}_{n},\qquad n=1,2,\ldots
\end{align*}
According to \eqref{timemethod}, it holds that $dp^{M,k}_{n+1}\wedge dq^{M,k}_{n+1}=\left(a_{11}(k^2\tau)a_{22}(k^2\tau)-a_{12}(k^2\tau)a_{21}(k^2\tau)\right)dp^{M,k}_{n}\wedge dq^{M,k}_{n}$. Hence, the
method  generated by \eqref{timemethod} is symplectic for all sufficiently small $\tau>0$ if and only if for all sufficiently small $\tau>0$, $k=1,2,\ldots,M$,
\begin{align*}
a_{11}(k^2\tau)a_{22}(k^2\tau)-a_{12}(k^2\tau)a_{21}(k^2\tau)=1,	
\end{align*}
which is equivalent to that there is some $h_0>0$ such that
\begin{align*}
a_{11}(h)a_{22}(h)-a_{12}(h)a_{21}(h)=1,\qquad \forall\quad h<h_0,
\end{align*}
i.e., \textbf{Assumption \ref{assum2}} holds.

\begin{assum}\label{assum3}
There exist some  $\eta\in(0,1)$ and some $h_3>0$ such that
\begin{align*}
|c(h)|<(1-\eta)\sqrt{a(h)b(h)}, \qquad\forall\quad h<h_3.
\end{align*}
Here, functions $a$, $b$, $c:(0,\infty)\to\mbb R$ are defined by 
\begin{align*}
a=&(a_{11}b_1+a_{12}b_2-b_1)^2+b_1(a_{11}b_1+a_{12}b_2)(2-{\rm tr}(A)),\\
b=&(a_{21}b_1-a_{11}b_2+b_2)^2-b_2(a_{21}b_1-a_{11}b_2)(2-{\rm tr}(A)),\\
	c=&\frac{1}{2}\left(a_{21}b_1-a_{11}b_2\right)b_1{\rm tr}(A)+b_1b_2\left(\frac{1}{2}({\rm tr}(A))^2-1\right)\nonumber\\
&-\left(a_{11}b_1+a_{12}b_2\right)\left(a_{21}b_1-a_{11}b_2\right)
-\frac{1}{2}{\rm tr}(A)\left(a_{11}b_1+a_{12}b_2\right)b_2.
\end{align*} 
\end{assum}
	\textbf{Assumption \ref{assum3}} is used to give the explicit expression of the rate functions of  $\{A^M_N\}_{N\in\mbb N}$ and $\{B^M_N\}_{N\in\mbb N}$.  In fact,  $a(h),b(h)>0$ for sufficiently small $h$,  whose proof is similar to those of Lemmas 3.2 and 5.1 in \cite{LDPosc}. In addition, we have the following property.
\begin{rem}\label{sec5eq1}
	Under \textbf{Assumption \ref{assum2}}, $c=\frac{a_{11}-a_{22}}{2}\left[a_{12}b_2^2-a_{21}b_1^2+b_1b_2\left(a_{11}-a_{22}\right)\right].$
\end{rem}
This is because under \textbf{Assumption \ref{assum2}}, $\det(A)=a_{11}a_{22}-a_{12}a_{21}=1$. Then it follows that
\begin{align*}
c=&b_1^2\left(\frac{1}{2}a_{21}\tr(A)-a_{11}a_{21}\right)+b_2^2\left(a_{11}a_{12}-\frac{1}{2}a_{12}\tr(A)\right)\nonumber\\
&+b_1b_2\left[\frac{1}{2}(\tr(A))^2-1-(a_{12}a_{21}-a_{11}^2)-a_{11}\tr(A)\right]\nonumber\\
=&b_1^2\left(\frac{1}{2}a_{21}\tr(A)-a_{11}a_{21}\right)+b_2^2\left(a_{11}a_{12}-\frac{1}{2}a_{12}\tr(A)\right)\nonumber\\
&+b_1b_2\left[\frac{1}{2}(a_{11}-a_{22})^2+a_{11}a_{22}-a_{12}a_{21}-1\right]\nonumber\\
=&\frac{a_{11}-a_{22}}{2}\left[a_{12}b_2^2-a_{21}b_1^2+b_1b_2\left(a_{11}-a_{22}\right)\right].
\end{align*}
When we investigate the LDP of $\{B^M_N\}_{N\in\mbb N}$ via temporal non-symplectic methods, we give the following assumption (see \cite{LDPosc}).
\begin{assum}\label{assum4}
	There is some $h_4>0$ such that for all $h<h_4$, $\det(A(h))<1$.
\end{assum}
In addition, when investigating the asymptotical preservation of $\left\{u^M_n\right\}_{M,n\in\mbb N}$ for the LDP of $\left\{B_T\right\}_{T>0}$, we give the following assumption concerning the convergence of the numerical method. 
\begin{assum}\label{assum5}
$
\left|a_{11}-1\right|+\left|a_{22}-1\right|+\left|a_{12}-h\right|+\left|a_{21}+h\right|=\mcal{O}(h^2),~\text{and}~\left|b_1\right|+\left|b_2-1\right|=\mcal O(h).$
\end{assum}
\noindent One can prove that under  \textbf{Assumption \ref{assum5}}, $\left\{\left(p^M_n,q^M_n\right)\right\}_{n\in\mbb N}$ corresponding to \eqref{timemethod} has at least first order convergence in mean-square sense. For more details, one  refers to \cite{LDPosc}.

It is verified that the methods  in Examples $1$ and $2$ are symplectic satisfying  \textbf{Assumptions \ref{assum1}-\ref{assum3} and \ref{assum5}}.  And the method in Example $3$ is non-symplectic satisfying \textbf{Assumptions \ref{assum1} and \ref{assum4}}.

To characterize the asymptotical preservation of $\{u^M_n\}_{M,n\in\mbb N}$ for the LDP of $\{B_T\}_{T>0}$, we give the following definition (see \cite{LDPosc} for the similar definition).
\begin{Def}\label{sec5def5.4}
 	For a spatio-temporal full discretization $\{u^M_n\}_{M,n\in\mbb N}$ of \eqref{xde1} with temporal stepsize $\tau$, denote $B^M_N=\frac{u^M_N}{N\tau}$. Assume that for each fixed $M\in\mbb N$, $\{B^M_N\}_{N\in\mbb N}$  satisfies an LDP on $H^0$ with the rate function $I^{M,\tau}$. We call $I^{M,\tau}_{mod}:=\frac{I^{M,\tau}}{\tau}$  the modified rate function. Then $\{u^M_n\}_{M,n\in\mbb N}$  is said to weakly asymptotically preserves the LDP of $\{B_T\}_{T>0}$  if for each $x\in Q^{\frac{1}{2}}(H^0)$ and $\varepsilon>0$, there exist $x_0\in H^0$, $M>0$ and $\tau>0$ such that
 	\begin{align}\label{wasym1}
 	\left\|x-x_0\right\|_{H^0}<\varepsilon,\qquad \left|I(x)-I^{M,\tau}_{mod}(x_0)\right|<\varepsilon.
 	\end{align}
\end{Def}
With the above preparation, we give our main results of this paper. That is, for the full discretization $\{u^M_n\}_{M,n\in\mbb N}$ with $u^M_n=F(p^{M}_n+\bm{i}q^M_n)$, where $\{p^{M}_n,q^M_n\}_{M,n\in\mbb N}$ is the numerical solution corresponding to \eqref{timemethod}, when the temporal discretization is symplectic, it weakly asymptotically preserves the LDP of $\{B_T\}_{T>0}$, while it does not possess this property for a temporal non-symplectic discretization. 
\begin{theo}\label{sec5tho5.5}
 If \textbf{Assumptions \ref{assum1}, \ref{assum2} and \ref{assum5}}  hold, then  
	\begin{itemize}
	\item[(1)] For each fixed $M\in\mbb N$ with $\eta_{M}>0$,  we have that for all   sufficiently  small stepsize $\tau$,  $\{B^M_N\}_{N\in\mbb N}$ satisfies an LDP on $H^0$ with the good rate function given by \begin{align}\label{sec5IMtau}
	&I^{M,\tau}(x)\nonumber\\
	=&\begin{cases}
	\sum_{k=1}^{M}\frac{\tau\left(4-(\rm{tr}(A(k^2\tau)))^2\right)}{4\left[a(k^2\tau)b(k^2\tau)-c^2(k^2\tau)\right]\alpha_k^2}\left[b(k^2\tau)(\Re\left\langle x,e_k\right\rangle_{\mbb C})^2+a(k^2\tau)(\Im \left\langle x,e_k\right\rangle_{\mbb C})^2\right.\\
	\phantom{\sum_{k=1}^{M}\frac{\tau\left(4-(\rm{tr}(A))^2\right)}{4\left[a(k^2\tau)b(k^2\tau)-c^2(k^2\tau)\right]\alpha_k^2}}+2c(k^2\tau)\Re\left\langle x,e_k\right\rangle_{\mbb C}\Im\left\langle x,e_k\right\rangle_{\mbb C}\big], \qquad &\text{if}~x\in H_M, \\
	+\infty, &\text{otherwise}.
	\end{cases}
	\end{align} 
		
	\item[(2)] For each fixed $M\in\mbb N$ with $\eta_M>0$, $\{u^M_N\}_{N\in\mbb N}$ asymptotically preserves the LDP of $\{B^M_T\}_{T>0}$, i.e., the modified rate function satisfies
	\begin{align}\label{sec5k19}
	\lim_{\tau\to 0}I^{M,\tau}_{mod}(x)
	=I^M(x).
	\end{align} 
	\item[(3)] $\{u^M_n\}_{M,n\in\mbb N}$ weakly asymptotically preserves the LDP for $\{B_T\}_{T>0}$ of \eqref{xde1}, i.e.,
	\eqref{wasym1} holds.
	\end{itemize}
\end{theo}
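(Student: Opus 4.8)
The plan is to prove the three parts in order, essentially by adapting the argument for $\{B_T\}_{T>0}$ in Theorem \ref{LDP for BT} to the finite-dimensional setting and then chaining together the approximations established in Sections \ref{Sec3} and \ref{Sec4}. For part (1), I would first solve the linear recursion \eqref{sec5recur} explicitly: iterating gives $p^{M,k}_N+\bm i q^{M,k}_N$ as a deterministic term $A(k^2\tau)^N(p^{M,k}_0,q^{M,k}_0)^\top$ plus a sum $\alpha_k\sum_{j=0}^{N-1}A(k^2\tau)^{N-1-j}B(k^2\tau)\delta\beta_{k,j}$ of independent Gaussian increments. Under \textbf{Assumption \ref{assum1}} the matrix $A(h)$ has complex-conjugate eigenvalues, and under \textbf{Assumption \ref{assum2}} they lie on the unit circle, so $A(k^2\tau)^N$ stays bounded; this makes the deterministic part and the cross terms $O(1)$ in $N$ and guarantees the limit $\lim_{N\to\infty}\frac1N\mathrm{Var}\langle\cdot\rangle$ exists. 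Hence $\langle A^M_N,\lambda\rangle_{\mathbb R}$ is Gaussian with mean $O(1/N)$ and a variance whose leading term is linear in $N$, so the logarithmic moment generating function $\Lambda^{M,\tau}(\lambda)=\lim_{N\to\infty}\frac1N\ln\mathbf E e^{N\langle A^M_N,\lambda\rangle_{\mathbb R}}$ is a quadratic form in $(\Re\langle\lambda,e_k\rangle,\Im\langle\lambda,e_k\rangle)$; the coefficients $a,b,c$ and the prefactor $\tfrac{4-(\mathrm{tr}A)^2}{\cdots}$ come out of this variance computation exactly as in \cite{LDPosc}, using Remark \ref{sec5eq1} under \textbf{Assumption \ref{assum2}}. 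Since $\mathbb C^M$ is finite dimensional, exponential tightness is automatic, so Theorem \ref{GE} (the G\"artner–Ellis theorem) yields an LDP for $\{A^M_N\}_{N\in\mathbb N}$ on $\mathbb C^M$ with rate function the Fenchel–Legendre transform of $\Lambda^{M,\tau}$, which is again a quadratic form (invertibility of the $2\times2$ covariance blocks is where \textbf{Assumption \ref{assum3}}, i.e. $ab-c^2>0$, enters — note this follows from \textbf{Assumptions \ref{assum1},\ref{assum2}}, cf. the remark after \textbf{Assumption \ref{assum3}}). Then, since $B^M_N=F(A^M_N)$ with $F$ the linear (hence continuous) injection \eqref{sec5F}, the contraction principle (Lemma \ref{contraction}) transfers the LDP to $\mathrm{Range}(F)=H_M\subset H^0$, and Lemma \ref{subLDP} (as $H_M$ is closed in $H^0$ and carries full measure) lifts it to $H^0$ with the rate function \eqref{sec5IMtau}, infinite off $H_M$.

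For part (2), I would show $\lim_{\tau\to0}I^{M,\tau}_{mod}(x)=I^M(x)$ for each fixed $x\in H^0$. If $x\notin H_M$ both sides are $+\infty$, so fix $x\in H_M$ and examine the $k$-th summand of $I^{M,\tau}_{mod}=I^{M,\tau}/\tau$. Here I invoke \textbf{Assumption \ref{assum5}}: the expansions $a_{11},a_{22}=1+O(h^2)$, $a_{12}=h+O(h^2)$, $a_{21}=-h+O(h^2)$, $b_1=O(h)$, $b_2=1+O(h)$ give $\mathrm{tr}(A(h))=2+O(h^2)$, so $4-(\mathrm{tr}(A(h)))^2=O(h^2)$, and a Taylor expansion of $a,b,c$ at $h=0$ shows (exactly as in \cite{LDPosc}) that $a(h)\sim h^2$, $b(h)\sim$ const$\,\cdot h^2$ (I'd track the precise leading constants), $c(h)=O(h^{?})$ of higher order, so that the quotient $\tfrac{\tau(4-(\mathrm{tr}A(k^2\tau))^2)}{4(ab-c^2)(k^2\tau)\alpha_k^2}$ has a finite limit as $\tau\to0$ equal to $\tfrac{1}{\alpha^2\eta_k}$. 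Summing over $k=1,\dots,M$ recovers $I^M(x)=\frac{1}{\alpha^2}\|Q_M^{-1/2}x\|_{H^0}^2=\frac{1}{\alpha^2}\sum_{k=1}^M\eta_k^{-1}|\langle x,e_k\rangle_{\mathbb C}|^2$ (valid since $\eta_M>0$ forces all $\eta_1,\dots,\eta_M>0$, so $x\in Q_M^{1/2}(H^0)$ whenever $x\in H_M$). This is a finite sum of pointwise limits, so no uniformity issue arises.

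For part (3), I would combine part (2) with Theorem \ref{sec4tho4.4}. Given $x\in Q^{1/2}(H^0)$ and $\varepsilon>0$: by Theorem \ref{sec4tho4.4} there are $M\in\mathbb N$ and $x_0\in H^0$ (namely $x_0=Q_M^{1/2}Q^{-1/2}x\in H_M$) with $\|x-x_0\|_{H^0}<\varepsilon$ and $|I(x)-I^M(x_0)|<\varepsilon/2$; then by part (2), for that fixed $M$ (and $\eta_M>0$, which we may assume, increasing $M$ if necessary) we may pick $\tau>0$ small enough that $|I^{M,\tau}_{mod}(x_0)-I^M(x_0)|<\varepsilon/2$, whence $|I(x)-I^{M,\tau}_{mod}(x_0)|<\varepsilon$, which is \eqref{wasym1}. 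The main obstacle is part (2), specifically the bookkeeping of the leading-order asymptotics of $a(h),b(h),c(h)$ under \textbf{Assumption \ref{assum5}}: one must verify that the numerator $4-(\mathrm{tr}A)^2$ and the denominator $ab-c^2$ vanish at precisely matching orders in $h$ so that the quotient (after multiplying by the extra $\tau$ from $I^{M,\tau}_{mod}$) converges to exactly $\tfrac{1}{\alpha^2\eta_k}$; getting the constants right, rather than merely the orders, is the delicate point, and it is what pins down that the symplectic condition \textbf{Assumption \ref{assum2}} (via Remark \ref{sec5eq1}) is exactly what makes the limit coincide with $I^M$ rather than degenerate or blow up.
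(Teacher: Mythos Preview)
Your proposal is correct and follows the paper's route closely (Lemmas \ref{sec5lem1}--\ref{sec5tho5.3} for part (1), the small-$h$ asymptotics of $a,b,c$ for part (2), and chaining with Theorem \ref{sec4tho4.4} for part (3)). Two small corrections are in order.

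First, \textbf{Assumption \ref{assum3}} does \emph{not} follow from \textbf{Assumptions \ref{assum1}} and \textbf{\ref{assum2}} alone; the remark after \textbf{Assumption \ref{assum3}} only asserts $a,b>0$. It is \textbf{Assumption \ref{assum5}} that yields $a\sim h^2$, $b\sim h^2$, $c=\mathcal O(h^3)$ (via Remark \ref{sec5eq1}, since $a_{11}-a_{22}=\mathcal O(h^2)$), and hence $ab-c^2\sim h^4>0$; the paper establishes this inside part (2) and uses it retroactively to justify the explicit formula \eqref{sec5IMtau} in part (1).

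Second, in part (3) your parenthetical ``increasing $M$ if necessary'' to force $\eta_M>0$ fails when $Q$ has finite rank $l$: then $\eta_M=0$ for every $M>l$, so increasing $M$ is exactly the wrong move. The paper splits into two cases and, in the finite-rank case, takes $M=l$, which simultaneously gives $\eta_M>0$ (so part (2) applies) and $I^M=I$ exactly (so no $\varepsilon/2$ approximation from Theorem \ref{sec4tho4.4} is needed).

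A minor stylistic point: the contraction principle (Lemma \ref{contraction}) applied directly to $F:\mathbb C^M\to H^0$ already produces a rate function equal to $+\infty$ off $\mathrm{Range}(F)=H_M$ (infimum over the empty set), so your extra appeal to Lemma \ref{subLDP} is unnecessary; the paper does not use it here. Likewise, the paper proves exponential tightness of $\{A^M_N\}$ by hand (Lemma \ref{sec5lem5.2}) from finiteness of $\Lambda^{M,\tau}$ rather than citing finite-dimensionality, but your shortcut is valid.
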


\begin{theo}\label{sec5tho5.7}
	 If \textbf{Assumptions \ref{assum1} and \ref{assum4}}   hold, then for each $M\in\mbb N$, 
	$\{B^M_N\}_{N\in\mbb N}$ satisfies an LDP on $H^0$ with the good rate function
	\begin{align*}
	I_{ns}^{M,\tau}(x)=\begin{cases}
	0, \qquad &\text{if}~x=0, \\
	+\infty, &\text{otherwise}.
	\end{cases}
	\end{align*}
	Moreover,  $\{u^M_n\}_{M,n\in\mbb N}$ can not weakly asymptotically preserve the LDP for $\{B_T\}_{T>0}$ of \eqref{xde1}, i.e.,
	\eqref{wasym1} does not hold.
\end{theo}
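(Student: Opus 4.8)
\textbf{Proof proposal for Theorem \ref{sec5tho5.7}.}
The plan is to mimic the structure used in Theorems \ref{sec4tho4.1} and \ref{sec5tho5.5}: first obtain the logarithmic moment generating function of $\{A^M_N\}_{N\in\mbb N}$ on $\mbb C^M$, then apply Theorem \ref{GE} together with the contraction principle (Lemma \ref{contraction}) via the map $F$ of \eqref{sec5F} to get the LDP of $\{B^M_N\}_{N\in\mbb N}$ on $H^0$, and finally argue that the resulting rate function cannot approximate $I$ in the sense of \eqref{wasym1}. For the first part, I would solve the linear recursion \eqref{sec5recur} explicitly: since $p^{M,k}_n+\bm i q^{M,k}_n$ is an affine image of the Gaussian increments $\delta\beta_{k,m}$, the vector $A^M_N$ is Gaussian, so $\Lambda^{M}_{ns}(\lambda)=\lim_{N\to\infty}\frac1N\ln\mbf E e^{N\langle A^M_N,\lambda\rangle_{\mbb R}}$ reduces to $\lim_{N\to\infty}\frac1{2N}\mbf{Var}\langle p^M_N+\bm i q^M_N,\lambda\rangle_{\mbb R}$ (the mean term is $O(1/N)$ since $\|A(k^2\tau)^N\|$ stays bounded). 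The key computation is the growth rate of $\mbf{Var}(p^{M,k}_N,q^{M,k}_N)=\alpha_k^2\sum_{m=0}^{N-1}A(k^2\tau)^{m}B(k^2\tau)B(k^2\tau)^\top (A(k^2\tau)^\top)^{m}$; under \textbf{Assumption \ref{assum4}} one has $\det(A(k^2\tau))<1$, hence (combined with \textbf{Assumption \ref{assum1}}, which keeps the eigenvalues a genuine conjugate pair or otherwise controlled) the spectral radius of $A(k^2\tau)$ is strictly less than $1$, so this sum converges as $N\to\infty$ and is therefore $O(1)$, not $O(N)$. Consequently $\frac1N\mbf{Var}\langle\cdot,\lambda\rangle_{\mbb R}\to 0$ and $\Lambda^M_{ns}(\lambda)\equiv 0$ for all $\lambda\in\mbb C^M$.

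Next I would feed $\Lambda^M_{ns}\equiv 0$ into the Fenchel--Legendre transform: $(\Lambda^M_{ns})^*(x)=\sup_{\lambda\in\mbb C^M}\langle\lambda,x\rangle_{\mbb R}=0$ if $x=0$ and $+\infty$ otherwise. Exponential tightness of $\{A^M_N\}_{N\in\mbb N}$ on the finite-dimensional space $\mbb C^M$ is automatic (a finite-valued logarithmic moment generating function on a finite-dimensional space yields exponential tightness; alternatively one checks directly, as in the proof of Theorem \ref{sec4tho4.1}, that $\mbf P(\|A^M_N\|>L)$ decays super-exponentially in $N$ because $p^M_N+\bm i q^M_N$ has bounded variance while being divided by $N\tau$). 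Theorem \ref{GE} then gives that $\{A^M_N\}_{N\in\mbb N}$ satisfies an LDP on $\mbb C^M$ with good rate function concentrated at the origin, and since $F$ is continuous and linear with $F(0)=0$, the contraction principle (Lemma \ref{contraction}) transfers this to the stated LDP for $\{B^M_N\}_{N\in\mbb N}=\{F(A^M_N)\}_{N\in\mbb N}$ on $H^0$, with $I^{M,\tau}_{ns}(x)=0$ at $x=0$ and $+\infty$ elsewhere; the modified rate function $I^{M,\tau}_{ns}/\tau$ is the same function.

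For the final assertion, I would note that $I^{M,\tau}_{mod,ns}$ takes the value $0$ only at $x=0$ and $+\infty$ everywhere else, independently of $M$ and $\tau$. Pick any $x\in Q^{\frac12}(H^0)$ with $x\neq 0$ and $I(x)$ finite and positive — such $x$ exists since $Q^{\frac12}(H^0)$ is a dense infinite-dimensional subspace and $I$ vanishes only at $0$ — and choose $\varepsilon>0$ small enough that $\varepsilon<\|x\|_{H^0}$ and $\varepsilon<I(x)$. Then for the required $x_0$ we would need $\|x-x_0\|_{H^0}<\varepsilon$, which forces $x_0\neq 0$ and hence $I^{M,\tau}_{mod,ns}(x_0)=+\infty$, so $|I(x)-I^{M,\tau}_{mod,ns}(x_0)|=+\infty\geq\varepsilon$, contradicting \eqref{wasym1}; thus no such $(x_0,M,\tau)$ exists and weakly asymptotical preservation fails. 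The main obstacle is the first step: rigorously establishing that the spectral radius of $A(k^2\tau)$ is strictly below $1$ for small $\tau$ under \textbf{Assumptions \ref{assum1} and \ref{assum4}} (so that the covariance sum converges and the growth rate is $O(1)$ rather than $O(N)$), and handling the boundary behaviour uniformly over $k=1,\dots,M$; this is the analogue of the eigenvalue analysis in \cite{LDPosc} and should follow from $\det(A(k^2\tau))<1$ together with the discriminant condition $4\det(A)-(\mrm{tr}(A))^2>0$, but it deserves a careful argument.
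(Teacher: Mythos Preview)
Your proposal is correct and follows essentially the same route as the paper: show that the mean and covariance of $(p^{M,k}_N,q^{M,k}_N)$ stay bounded in $N$, deduce $\Lambda^{M,\tau}\equiv 0$, apply Theorem \ref{GE} on $\mbb C^M$ and then the contraction principle via $F$. The paper carries out the boundedness step through the explicit formula $\hat\alpha^k_n=[\det(A(k^2\tau))]^{(n-1)/2}\sin(n\theta_k)/\sin(\theta_k)$, which under \textbf{Assumption \ref{assum4}} gives $|\hat\alpha^k_n|\le R_{k,\tau}^{\,n-1}/\sin(\theta_k)$ with $R_{k,\tau}=\sqrt{\det(A(k^2\tau))}<1$; your spectral-radius argument is exactly the same observation phrased more abstractly (under \textbf{Assumption \ref{assum1}} the eigenvalues of $A(k^2\tau)$ are a conjugate pair of modulus $\sqrt{\det(A(k^2\tau))}$), so the ``main obstacle'' you flag is in fact already resolved by the assumptions. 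One minor remark: your justification that a nonzero $x\in Q^{1/2}(H^0)$ exists does not need $Q^{1/2}(H^0)$ to be dense or infinite-dimensional---it suffices that $\eta_1>0$, which is implicit in the setup---and for the ``moreover'' clause the paper simply asserts that \eqref{wasym1} fails, whereas your explicit choice of $\varepsilon<\min(\|x\|_{H^0},I(x))$ supplies the verification the paper omits.
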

For the method in Example $1$, we have that for each $M\in\mbb N$, there is sufficiently small $\tau$, 
$\{B^M_N\}_{N\in\mbb N}$ satisfies an LDP on $H^0$ with the good rate function
\begin{align*}
I_1^{M,\tau}(x)=\begin{cases}
\frac{\tau}{\alpha^2}\left\|\left(I+\frac{\tau^2\Delta_M^2}{4}\right)Q_M^{-\frac{1}{2}}x\right\|, \qquad &\text{if}~x\in Q_M^{\frac{1}{2}}(H_0), \\
+\infty, &\text{otherwise}.
\end{cases}
\end{align*}
Hence, $\lim\limits_{\tau\to 0}I_{1,mod}^{M,\tau}(x):=\lim\limits_{\tau\to 0} I_1^{M,\tau}(x)/\tau=I^M(x)$ for each $x\in H^0$.  In addition, for each  $h>0$, \textbf{Assumptions \ref{assum1} and \ref{assum2}}  hold. These
verify that midpoint scheme satisfies both the conditions and the first conclusion of Theorem \ref{sec5tho5.5}. Finally, combining Theorem \ref{sec4tho4.4}, we have that the full discretization spatially, by a spatial Galerkin method and  temporally by the midpoint scheme, weakly asymptotically the LDP of $\{B_T\}_{T>0}$.\\

Next we give the rate functions of $\{B^M_N\}_{N\in\mbb N}$ when using the methods in Examples $1$-$3$.
\begin{itemize}
	\item Midpoint Scheme\\
	The rate function of $\{B^M_N\}_{N\in\mbb N}$ is
	\begin{align*}
	I_1^{M,\tau}(x)=\begin{cases}
	\frac{\tau}{\alpha^2}\left\|\left(I+\frac{\tau^2\Delta_M^2}{4}\right)Q_M^{-\frac{1}{2}}x\right\|, \qquad &\text{if}~x\in Q_M^{\frac{1}{2}}(H_0), \\
	+\infty, &\text{otherwise}.
	\end{cases}
	\end{align*}
	\item Exponential Euler Method\\
	 The rate function of $\{B^M_N\}_{N\in\mbb N}$  is
	 \begin{align*}
	 I_2^{M,\tau}(x)=\begin{cases}
	 \frac{\tau}{\alpha^2}\left\|Q_M^{-\frac{1}{2}}x\right\|, \qquad &\text{if}~x\in Q_M^{\frac{1}{2}}(H_0), \\
	 +\infty, &\text{otherwise}.
	 \end{cases}
	 \end{align*}
	 In particular, we note that if $Q$ is a finite rank operator, i.e., there is $l\in\mbb N$ such $\eta_{l+1}=\eta_{l+2}=\cdots=0$, then $I^{l,\tau}_{2,mod}=I$. This indicates when noise takes values in finite dimensional space, this full discretization  preserves exactly the LDP of $\{B_T\}_{T>0}$. 
	 
	 \item Backward Euler--Maruyama Method\\
	 The rate function of $\{B^M_N\}_{N\in\mbb N}$ is
	 \begin{align*}
	 I_3^{M,\tau}(x)=\begin{cases}
	 0, \qquad &\text{if}~x=0, \\
	 +\infty, &\text{otherwise}.
	 \end{cases}
	 \end{align*}
\end{itemize}

\subsection{Proof of Theorem \ref{sec5tho5.5}}
In this part, we consider the LDP of $\{B^M_N\}_{N\in\mbb N}$ for the full discretizations of \eqref{xde1}, spatially by the spectral Galerkin method \eqref{Galerkin} and temporally by symplectic methods. To this end, we let \textbf{Assumption \ref{assum2}} hold throughout this part. Firstly, for every fixed $k\in \{1,2,\ldots,M\}$, we
derive the limit $\Lambda_k(z):=\lim\limits_{N\to \infty}\frac{1}{N}\ln\mbf E\exp\left\{\frac{1}{\tau}\left\langle z,p^{M,k}_N+\bm{i}q^{M,k}_N\right\rangle_{\mbb R}\right\}$ for $z\in\mbb C$, to give the expression of the logarithmic moment generating function  $\Lambda^{M,\tau}(\bm{\lambda})=\lim\limits_{N\to \infty}\frac{1}{N}\ln\mbf E\exp\left\{N\left\langle  \bm{\lambda},A^M_N \right\rangle_\mbb R\right\}$ of $\left\{A^M_N\right\}_{N\in\mbb N}$. Then using Theorem \ref{GE}, we obtain the LDP of  $\{A^M_N\}_{N\in\mbb N}$ for symplectic methods. Further,  the contraction principle (Lemma \ref{contraction}) leads to the LDP of $\{B^M_N\}_{N\in\mbb N}$ with $B^M_N:=F(A^M_N)$. Finally combining the convergence condition (\textbf{Assumption \ref{assum5}}), we prove that $\{u^M_n\}_{M,n\in\mbb N}$ weakly asymptotically preserves the LDP of $\{B_T\}_{T>0}$, which
completes the proof of Theorem \ref{sec5tho5.5}.
\begin{lem}\label{sec5lem1}
	 If \textbf{Assumptions \ref{assum1} and \ref{assum2}} hold, then for each fixed $M\in\mbb N$, we have that for all   sufficiently  small stepsize $\tau$, 
	 \begin{align} \label{sec5k11}
	 \Lambda^{M,\tau}(\bm{\lambda})=\sum_{k=1}^{M}\Lambda_k(\lambda_k)=\sum_{k=1}^{M}\frac{\alpha_k^2}{4\tau\sin^2(\theta_k)}\left\{a(k^2\tau)(\Re\lambda_k)^2+b(k^2\tau)(\Im\lambda_k)^2-2c(k^2\tau)\Re\lambda_k\Im\lambda_k\right\},
	 \end{align}
	 where $a,b,c$ are given in \textbf{Assumption \ref{assum3}}.
	 Moreover, $\Lambda^{M,\tau}$ is finite valued and Gateaux differentiable. 
\end{lem}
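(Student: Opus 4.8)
The plan is to solve the linear recursion \eqref{sec5recur} explicitly and exploit the mutual independence of the scalar Brownian motions $\beta_1,\dots,\beta_M$. Writing $A=A(k^2\tau)$, $B=B(k^2\tau)$ and $X^k_n=(p^{M,k}_n,q^{M,k}_n)^\top$, the recursion gives $X^k_N=A^NX^k_0+\alpha_k\sum_{j=0}^{N-1}A^jB\,\delta\beta_{k,N-1-j}$, so that $X^k_N$ is Gaussian on $\mbb R^2$ with mean $A^NX^k_0$ and covariance $\Sigma^k_N:=\alpha_k^2\tau\sum_{j=0}^{N-1}A^jBB^\top(A^\top)^j$. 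Since $X^k_N$ depends only on $\beta_k$, the vectors $X^1_N,\dots,X^M_N$ are independent, and $N\langle\bm\lambda,A^M_N\rangle_{\mbb R}=\frac1\tau\sum_{k=1}^M\langle\lambda_k,p^{M,k}_N+\bm i q^{M,k}_N\rangle_{\mbb R}$ is a sum of functions of the individual $\beta_k$'s; hence the moment generating function factorises and $\Lambda^{M,\tau}(\bm\lambda)=\sum_{k=1}^M\Lambda_k(\lambda_k)$. It then suffices to identify each $\Lambda_k$ for fixed $k$, once $\tau<\min(h_1,h_2)/M^2$ so that Assumptions \ref{assum1} and \ref{assum2} apply to every argument $k^2\tau$, $1\le k\le M$.

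Setting $\mu=(\Re\lambda_k,\Im\lambda_k)^\top$, the Gaussian MGF yields
$\frac1N\ln\mbf E\exp\{\frac1\tau\langle\lambda_k,p^{M,k}_N+\bm i q^{M,k}_N\rangle_{\mbb R}\}=\frac1{N\tau}\mu^\top A^NX^k_0+\frac{\alpha_k^2}{2\tau}\,\mu^\top\big(\frac1N\sum_{j=0}^{N-1}A^jBB^\top(A^\top)^j\big)\mu.$
Under Assumptions \ref{assum1}--\ref{assum2} one has $\det A=1$ and $(\tr A)^2<4$, so $A$ has simple eigenvalues $e^{\pm\bm i\theta_k}$ with $\cos\theta_k=\tfrac12\tr A$, $\theta_k\in(0,\pi)$ and $\sin^2\theta_k=\tfrac14\big(4-(\tr A)^2\big)$; in particular $\sup_N\|A^N\|<\infty$, so the mean term is $O(1/N)$ and vanishes in the limit. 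Thus $\Lambda_k(\lambda_k)=\frac{\alpha_k^2}{2\tau}\,\mu^\top\bar S_k\,\mu$, where $\bar S_k:=\lim_{N\to\infty}\frac1N\sum_{j=0}^{N-1}A^jBB^\top(A^\top)^j$, and the existence and evaluation of this Ces\`aro limit is the crux of the argument.

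To compute $\bar S_k$ I would use Cayley--Hamilton: since $\det A=1$ and $\tr A=2\cos\theta_k$, one gets $A^j=p_jA-p_{j-1}I$ with $p_j=\sin(j\theta_k)/\sin\theta_k$, hence $A^jB=p_j(AB)-p_{j-1}B$. Expanding $A^jBB^\top(A^\top)^j$ into the three symmetric rank-one pieces built from $B$ and $AB$, and using the elementary Ces\`aro limits $\frac1N\sum_{j=0}^{N-1}p_j^2\to\frac1{2\sin^2\theta_k}$ and $\frac1N\sum_{j=0}^{N-1}p_jp_{j-1}\to\frac{\cos\theta_k}{2\sin^2\theta_k}$ (both following from $\frac1N\sum_{j=0}^{N-1}e^{2\bm i j\theta_k}\to0$, valid because $e^{2\bm i\theta_k}\neq1$ for $\theta_k\in(0,\pi)$), one obtains
\[
\bar S_k=\frac{1}{2\sin^2\theta_k}\Big[(AB)(AB)^\top-\cos\theta_k\big((AB)B^\top+B(AB)^\top\big)+BB^\top\Big].
\]
Substituting $B=(b_1,b_2)^\top$ and $AB=(a_{11}b_1+a_{12}b_2,\,a_{21}b_1+a_{22}b_2)^\top$ and reading off the coefficients of $(\Re\lambda_k)^2$, $(\Im\lambda_k)^2$ and $\Re\lambda_k\Im\lambda_k$ in $\frac{\alpha_k^2}{2\tau}\mu^\top\bar S_k\mu$ reproduces exactly $a(k^2\tau)$, $b(k^2\tau)$ and $-2c(k^2\tau)$ from Assumption \ref{assum3}: the two diagonal identifications are immediate from the definitions of $a$ and $b$, and the off-diagonal one recovers the formula for $c$ (equivalently, Remark \ref{sec5eq1} combined with $\det A=1$), exactly as in the companion computation of \cite{LDPosc}. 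This proves \eqref{sec5k11}.

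Finally, $\Lambda^{M,\tau}$ is a finite sum of quadratic forms in $\bm\lambda\in\mbb C^M$, hence a degree-two polynomial: it is finite-valued everywhere and Fr\'echet, in particular Gateaux, differentiable, with derivative obtained by polarising the quadratic forms. The genuinely delicate step is the linear-algebra evaluation of $\bar S_k$ and, within it, the bookkeeping that matches the cross term with $c$; I expect this to be the main obstacle, though it is organisational rather than conceptual, since only the elementary estimate on $\frac1N\sum e^{2\bm i j\theta_k}$ is required (not Weyl equidistribution), so resonant stepsizes with $\theta_k\in\pi\mbb Q$ are covered as well. Everything else reduces to a routine Gaussian computation.
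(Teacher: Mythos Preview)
Your proposal is correct and follows essentially the same approach as the paper: both reduce to the Gaussian MGF of the explicitly solved linear recursion, use the Cayley--Hamilton representation $A^j=p_jA-p_{j-1}I$ with $p_j=\sin(j\theta_k)/\sin\theta_k$ (the paper's $\hat\alpha^k_j$), and evaluate the same trigonometric Ces\`aro averages to extract the leading term. Your matrix packaging via $\bar S_k$ is a tidier organisation of exactly the entry-by-entry computation of $\mbf{Var}(p^{M,k}_N)$, $\mbf{Var}(q^{M,k}_N)$, $\mbf{Cor}(p^{M,k}_N,q^{M,k}_N)$ that the paper carries out, but the ingredients and the logic are identical.
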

\begin{proof}
For each $\bm{\lambda}=\left(\lambda_1,\lambda_2,\ldots,\lambda_M\right)\in\mbb C^M$, we have
\begin{align*}
\left\langle \bm{\lambda},p^M_N+\bm{i}q^M_N\right\rangle_\mbb R=\sum_{k=1}^{M}\left\langle \lambda_k,p^{M,k}_N+\bm{i}q^{M,k}_N \right\rangle_\mbb R=\sum_{k=1}^{M}\left(\Re\lambda_kp^{M,k}_N+\Im\lambda_kq^{M,k}_N\right).
\end{align*}
Thus, the logarithmic moment generating function for $\{A^M_N\}_{N\in\mbb N}$ is 
\begin{align}\label{sec5k3}
\Lambda^{M,\tau}(\bm{\lambda})&=\lim_{N\to \infty}\frac{1}{N}\ln\mbf E\exp\left\{N\left\langle  \bm{\lambda},A^M_N \right\rangle_\mbb R\right\}\nonumber\\
&=\lim_{N\to \infty}\frac{1}{N}\ln\mbf E\exp\left\{\frac{1}{\tau}\left\langle  \bm{\lambda},p^M_N+\bm{i}q^M_N \right\rangle_\mbb R\right\}\nonumber\\
&=\lim_{N\to \infty}\frac{1}{N}\ln\mbf E\exp\left\{\sum_{k=1}^{M}\frac{1}{\tau}\left(\Re\lambda_kp^{M,k}_N+\Im\lambda_kq^{M,k}_N\right)\right\}\nonumber\\
&=\lim_{N\to \infty}\frac{1}{N}\ln\prod_{k=1}^{M}\mbf E\exp\left\{\frac{1}{\tau}\left(\Re\lambda_kp^{M,k}_N+\Im\lambda_kq^{M,k}_N\right)\right\}\nonumber\\
&=\sum_{k=1}^{M}\lim_{N\to \infty}\frac{1}{N}\ln\mbf E\exp\left\{\frac{1}{\tau}\left(\Re\lambda_kp^{M,k}_N+\Im\lambda_kq^{M,k}_N\right)\right\}\nonumber\\
&=\sum_{k=1}^{M}\Lambda_k(\lambda_k),
\end{align}
where we have used the fact that $\left\{\left(p^{M,k}_n,q^{M,k}_n\right)\right\}_{n\in\mbb N}$, $k=1,2,\ldots,M$, are mutually independent stochastic processes as a result of the independence of $\{\beta_{k}(t)\}_{t\geq0}$, $k=1,2\ldots,M$. 

To acquire the expression $\Lambda_k(\cdot
)$, we need to give the general formula of $\left\{\left(p^{M,k}_n,q^{M,k}_n\right)\right\}_{n\in\mbb N}$ (Hereafter, we always fix some $k\in\{1,2,\ldots,M\}$ without extra statement). 
It follows from the  recurrence formula \eqref{sec5recur} that
\begin{align*}
\left(\begin{array}{c}
p^{M,k}_{n}\\\\
q^{M,k}_{n}
\end{array}\right)=
\left(A(k^2\tau)\right)^{n}\left(\begin{array}{cc}
p^{M,k}_0\\\\
q^{M,k}_0
\end{array}\right)
+\alpha_k
\sum_{j=0}^{n-1}\left(A(k^2\tau)\right)^{n-j-1}B(k^2\tau)\delta \beta_{k,j}, \qquad n=0,1,2\ldots
\end{align*}
Let $\theta_k\in(0,\pi)$ be the parameter such that   
\begin{equation}\label{theta}
\cos(\theta_k)=\frac{\text{tr}(A(k^2\tau))}{2\sqrt{\det(A(k^2\tau))}},\qquad\sin(\theta_k)=\frac{\sqrt{4\det(A(k^2\tau))-(\text{tr}(A(k^2\tau)))^2}}{2\sqrt{\det(A(k^2\tau))}}.
\end{equation}
Then under \textbf{Assumption \ref{assum1}}, one has (also see \cite[Sect. 3]{LDPosc}) that for sufficiently small $\tau$,
\begin{align*}
	\left(A(k^2\tau)\right)^{n}=\left(
	\begin{array}{cc}
	-\det(A(k^2\tau))\hat{\alpha}^k_{n-1}+a_{11}(k^2\tau)\hat{\alpha}^k_{n}&a_{12}(k^2\tau)\hat{\alpha}^k_{n}\\
	\\
	a_{21}(k^2\tau)\hat{\alpha}^k_{n}&\hat{\alpha}^k_{n+1}-a_{11}(k^2\tau)\hat{\alpha}^k_{n}
	\end{array}\right),
\end{align*}
where $\hat{\alpha}^k_{n}=\left[\det(A(k^2\tau))\right]^{\frac{n-1}{2}}\sin(n\theta_k)/\sin(\theta_k)$.
In this way, we obtain the following expression of the general formula of $\left\{\left(p^{M,k}_n,q^{M,k}_n\right)\right\}_{n\in\mbb N}$
\begin{align}\label{sec5kp}
	p^{M,k}_n=&-\det(A)\hat{\alpha}^k_{n-1}p^{M,k}_0+\hat{\alpha}^k_{n}\left(a_{11}p^{M,k}_0+a_{12}q^{M,k}_0\right)\nonumber\\
	&+\alpha_k\sum_{j=0}^{n-1}\left[-\det(A)\hat{\alpha}^k_{n-2-j}b_1+(a_{11}b_1+a_{12}b_2)\hat{\alpha}^k_{n-1-j}\right]\delta\beta_{k,j}
\end{align}
and
\begin{align}\label{sec5kq}
q^{M,k}_n=&a_{21}\hat{\alpha}^k_{n}p^{M,k}_0+\hat{\alpha}^k_{n+1}q^{M,k}_0-a_{11}\hat{\alpha}^k_{n}q^{M,k}_0\nonumber\\
&+\alpha_k\sum_{j=0}^{n-1}\left[(a_{21}b_1-a_{11}b_2)\hat{\alpha}^k_{n-1-j}+b_2\hat{\alpha}^k_{n-j}\right]\delta\beta_{k,j},
\end{align}
where $\det(A)$, $a_{ij},b_i$, $i,j=1,2$, are computed at $k^2\tau$.
For convenience, when no confusion occurs, we always omit the argument  $k^2\tau$ of $\det(A)$, $a_{ij},b_i$, $i,j=1,2$.

Since \textbf{Assumption \ref{assum2}} holds,
\begin{equation}\label{symtheta}
\cos(\theta_k)=\frac{\text{tr}(A(k^2\tau))}{2},\qquad\sin(\theta_k)=\frac{\sqrt{4-(\text{tr}(A(k^2\tau)))^2}}{2},\qquad \hat{\alpha}^k_n=\frac{\sin(n\theta_k)}{\sin(\theta_k)}.
\end{equation}
It follows from \eqref{sec5kp}, \eqref{sec5kq} and \eqref{symtheta} that
\begin{align}\label{sec5Ep}
	\mbf Ep^{M,k}_N=&-\hat{\alpha}^k_{N-1}p^{M,k}_0+\hat{\alpha}^k_{N}\left(a_{11}p^{M,k}_0+a_{12}q^{M,k}_0\right)\nonumber\\
	=&\frac{1}{\sin(\theta_k)}\left[-\sin((N-1)\theta_k)p^{M,k}_0+\sin(N\theta_k)\left(a_{11}p^{M,k}_0+a_{12}q^{M,k}_0\right)\right]
\end{align}
and
\begin{align}\label{sec5Eq}
\mbf Eq^{M,k}_N=&\hat{\alpha}^k_{N+1}q^{M,k}_0+\hat{\alpha}^k_{N}\left(a_{21}p^{M,k}_0-a_{11}q^{M,k}_0\right)\nonumber\\
=&\frac{1}{\sin(\theta_k)}\left[\sin((N+1)\theta_k)q^{M,k}_0+\sin(N\theta_k)\left(a_{21}p^{M,k}_0-a_{11}q^{M,k}_0\right)\right].
\end{align}
In addition, we obtain
\begin{align*}
	\mbf {Var}(p^{M,k}_N)=&\tau\alpha_k^2\sum_{j=0}^{N-1}\left[-\hat\alpha^k_{N-2-j}b_1+(a_{11}b_1+a_{12}b_2)\hat\alpha^k_{N-1-j}\right]^2\nonumber\\
	=&\frac{\tau\alpha_k^2}{\sin^2(\theta_k)}\sum_{j=0}^{N-1}\left[b_1^2\sin^2((j-1)\theta_k)+(a_{11}b_1+a_{12}b_2)^2\sin^2(j\theta_k)\right.\nonumber\\
	&\left.-2(a_{11}b_1+a_{12}b_2)b_1\sin(j\theta_k)\sin((j-1)\theta_k)\right].
\end{align*}
Using the fact $2\sin(\alpha)\sin(\beta)=\cos(\alpha-\beta)-\cos(\alpha+\beta)$, we have
\begin{align}\label{sec5varp}
	\mbf {Var}(p^{M,k}_N)=&\frac{\tau\alpha_k^2N}{2\sin^2(\theta_k)}\left[b_1^2+(a_{11}b_1+a_{12}b_2)^2-2(a_{11}b_1+a_{12}b_2)b_1\cos(\theta_k)\right]+R_1(k),
\end{align}
where
\begin{align*}
	R_1(k)=\frac{\tau\alpha_k^2}{\sin^2(\theta_k)}\sum_{j=0}^{N-1}\left[-\frac{b_1^2}{2}\cos(2(j-1)\theta_k)-\frac{(a_{11}b_1+a_{12}b_2)^2}{2}\cos(2j\theta_k)+(a_{11}b_1+a_{12}b_2)b_1\cos((2j-1)\theta_k)\right].
\end{align*} 
By the facts $\sum_{n=1}^{N}\cos((2n+1)\theta)=\frac{\sin((2N+2)\theta)-\sin(2\theta)}{2\sin(\theta)}$ and $\sum_{n=1}^{N}\cos((2n)\theta)=\frac{\sin((2N+1)\theta)-\sin(\theta)}{2\sin(\theta)}$,\\ 
 $\left|\sum_{j=0}^{N-1}\cos(2(j-1)\theta_k)\right|+\left|\sum_{j=0}^{N-1}\cos(2j\theta_k)\right|\leq K(\tau,M)$ (Recall that we use the notation $K(\tau,M)$ to denote the constant dependent on $\tau,M$, but independent of $N$). Hence, we obtain $|R_1|\leq K(\tau,M)$.
Similarly, one has
\begin{align}\label{sec5varq}
	\mbf{Var}(q^{M,k}_N)=&\frac{\tau\alpha_k^2N}{2\sin^2(\theta_k)}\left[b_2^2+(a_{21}b_1-a_{11}b_2)^2+2(a_{21}b_1-a_{11}b_2)b_2\cos(\theta_k)\right]+R_2
\end{align}
with $|R_2|\leq K(\tau,M)$, and
\begin{align}\label{sec5Cor}
\mbf{Cor}(p^{M,k}_N,q^{M,k}_N)=-&\frac{\tau\alpha_k^2N}{2\sin^2(\theta_k)}\left[(a_{21}b_1-a_{11}b_2)b_1\cos(\theta_k)+b_1b_2\cos(2\theta_k)\right.\nonumber\\
&\left.-(a_{11}b_1+a_{12}b_2)(a_{21}b_1-a_{11}b_2)-(a_{11}b_1+a_{12}b_2)b_2\cos(\theta_k)\right]+R_3
\end{align}
with $|R_3|\leq K(\tau,M)$.
It follows from \eqref{sec5Ep} and \eqref{sec5Eq} that
\begin{align}\label{estimateE}
	\left|\mbf E\left\langle\lambda_k,p^{M,k}_N+\bm{i}q^{M,k}_N\right\rangle_{\mbb R}\right|=\left|\Re\lambda_k\mbf E p^{M,k}_N+\Im \lambda_k\mbf Eq^{M,k}_N\right|\leq K(\tau,M,\lambda_k).
\end{align}
Further, \eqref{sec5varp}, \eqref{sec5varq} and \eqref{sec5Cor} give
\begin{align}\label{estimate}
	&\mbf{Var}\left\langle\lambda_k,p^{M,k}_N+\bm{i}q^{M,k}_N\right\rangle_{\mbb R}\nonumber\\
	=&(\Re\lambda_k)^2\mbf{Var}(p^{M,k}_N)+(\Im\lambda_k)^2\mbf{Var}(q^{M,k}_N)+2\Re\lambda_k\Im\lambda_k\mbf{Cor}\left(p^{M,k}_N,q^{M,k}_N\right)\nonumber\\
	=&\frac{\tau\alpha_k^2N(\Re\lambda_k)^2}{2\sin^2(\theta_k)}\left[b_1^2+(a_{11}b_1+a_{12}b_2)^2-2(a_{11}b_1+a_{12}b_2)b_1\cos(\theta_k)\right]\nonumber\\
	&+\frac{\tau\alpha_k^2N(\Im\lambda_k)^2}{2\sin^2(\theta_k)}\left[b_2^2+(a_{21}b_1-a_{11}b_2)^2+2(a_{21}b_1-a_{11}b_2)b_2\cos(\theta_k)\right]\nonumber\\
	&-\frac{\Re\lambda_k\Im\lambda_k\tau\alpha_k^2N}{\sin^2(\theta_k)}\left[(a_{21}b_1-a_{11}b_2)b_1\cos(\theta_k)+b_1b_2\cos(2\theta_k)\right.\nonumber\\
	&\left.-(a_{11}b_1+a_{12}b_2)(a_{21}b_1-a_{11}b_2)-(a_{11}b_1+a_{12}b_2)b_2\cos(\theta_k)\right]+R
\end{align}
with $|R|\leq K(\tau,M,\lambda_k)$. Noting $\left\langle\lambda_k,p^{M,k}_N+\bm{i}q^{M,k}_N\right\rangle_{\mbb R}$ is Gaussian, we have that for each $\lambda_k\in\mbb C$,
\begin{align}\label{sec5lambdak}
	\Lambda_k(\lambda_k)&=\lim_{N\to \infty}\frac{1}{N}\ln\mbf E\exp\left\{\frac{1}{\tau}\left\langle \lambda_k,p^{M,k}_N+\bm{i}q^{M,k}_N\right\rangle_{\mbb R}\right\}\nonumber\\
	&=\lim_{N\to \infty}\frac{1}{N}\left(\frac{1}{\tau}\mbf E\left\langle \lambda_k,p^{M,k}_N+\bm{i}q^{M,k}_N\right\rangle_{\mbb R}+\frac{1}{2\tau^2}\mbf{Var}\left\langle \lambda_k,p^{M,k}_N+\bm{i}q^{M,k}_N\right\rangle_{\mbb R}\right)\nonumber\\
	&=\frac{\alpha_k^2(\Re\lambda_k)^2}{4\tau\sin^2(\theta_k)}\left[b_1^2+(a_{11}b_1+a_{12}b_2)^2-2(a_{11}b_1+a_{12}b_2)b_1\cos(\theta_k)\right]\nonumber\\
	&\phantom{=}+\frac{\alpha_k^2(\Im\lambda_k)^2}{4\tau\sin^2(\theta_k)}\left[b_2^2+(a_{21}b_1-a_{11}b_2)^2+2(a_{21}b_1-a_{11}b_2)b_2\cos(\theta_k)\right]\nonumber\\
	&\phantom{=}-\frac{\Re\lambda_k\Im\lambda_k\alpha_k^2}{2\tau\sin^2(\theta_k)}\left[(a_{21}b_1-a_{11}b_2)b_1\cos(\theta_k)+b_1b_2(2\cos(\theta_k)^2-1)\right.\nonumber\\
	&\phantom{=}\left.-(a_{11}b_1+a_{12}b_2)(a_{21}b_1-a_{11}b_2)-(a_{11}b_1+a_{12}b_2)b_2\cos(\theta_k)\right].
\end{align} 
Then, noting that $\tr(A(k^2\tau))=2\cos(\theta_k)$, we rewrite \eqref{sec5lambdak} as
\begin{align}\label{sec5lam}
	\Lambda_k(\lambda_k)=\frac{\alpha_k^2}{4\tau\sin^2(\theta_k)}\left\{a(k^2\tau)(\Re\lambda_k)^2+b(k^2\tau)(\Im\lambda_k)^2-2c(k^2\tau)\Re\lambda_k\Im\lambda_k\right\}.
\end{align}
By \eqref{sec5k3}, we get the expression \eqref{sec5k11}.

In addition, for each $\bm{\lambda}$, $z\in\mbb C^M$, the Gateaux derivative of $\Lambda^{M,\tau}$ is given by
\begin{align*}
	\mcal G\Lambda^{M,\tau}(\bm{\lambda})(z)=\sum_{k=1}^{M}\frac{\alpha_k^2}{4\tau\sin^2(\theta_k)}\left[2a(k^2\tau)\Re\lambda_k\Re z_k+2b(k^2\tau)\Im\lambda_k\Im z_k-2c(k^2\tau)\left(\Re\lambda_k\Im z_k+\Im\lambda_k\Re z_k\right)\right].
\end{align*}
\end{proof} 

According to Theorem \ref{GE}, in order to give the LDP of $\{A^M_N\}_{N\in\mbb N}$, it remains to show that $\{A^M_N\}_{N\in\mbb N}$ is exponentially tight. As is mentioned in Section \ref{Sec3}, we will use the finiteness of logarithmic moment generating function to derive the exponential tightness. In fact, we have the following lemma.
\begin{lem}\label{sec5lem5.2}
	If \textbf{Assumptions \ref{assum1} and \ref{assum2}} hold, then for each fixed $M\in\mbb N$, we have that for all   sufficiently  small stepsize $\tau$, $\{A^M_N\}_{N\in\mbb N}$ satisfies an LDP with the good rate function
	$(\Lambda^{M,\tau})^*(z)=\sup\limits_{\bm{\lambda}\in\mbb C^M}\left\{\left\langle\bm{\lambda},z\right\rangle_{\mbb R}-\Lambda^{M,\tau}(\bm{\lambda})\right\}$.
\end{lem}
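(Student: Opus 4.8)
The plan is to invoke the abstract G\"artner--Ellis theorem (Theorem \ref{GE}) on the finite dimensional space $\mbb C^M$ equipped with the real inner product $\langle\cdot,\cdot\rangle_{\mbb R}$; recall that $\mbb C^M\cong\mbb R^{2M}$ is a locally convex Hausdorff topological vector space whose dual is $\mbb C^M$ itself under this pairing. Most of the analytic work is already done in Lemma \ref{sec5lem1}: whenever $\tau$ is small enough that $k^2\tau<\min\{h_1,h_2\}$ for every $k\in\{1,\ldots,M\}$ (so that \textbf{Assumptions \ref{assum1} and \ref{assum2}} are simultaneously in force for all $M$ subsystems), the logarithmic moment generating function $\Lambda^{M,\tau}$ of $\{A^M_N\}_{N\in\mbb N}$ exists, is finite valued on all of $\mbb C^M$, and is Gateaux differentiable with the derivative $\mcal G\Lambda^{M,\tau}$ computed explicitly there. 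Hence the only hypothesis of Theorem \ref{GE} that remains to be checked is the exponential tightness of $\{A^M_N\}_{N\in\mbb N}$ in the sense of Definition \ref{exptightdef}.

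For the exponential tightness I would use the standard fact that, in finite dimensions, finiteness of the limiting logarithmic moment generating function already forces it. Fix a basis $\{g_j\}_{j=1}^{2M}$ of $\mbb C^M$ that is orthonormal for $\langle\cdot,\cdot\rangle_{\mbb R}$, and for $L>0$ set $K_L=\{z\in\mbb C^M:\|z\|\le L\}$, which is compact. Since $\|z\|>L$ forces $|\langle z,g_j\rangle_{\mbb R}|>L/(2M)$ for at least one $j$, the complement $K_L^c$ is contained in the union of the $4M$ half-spaces $\{\pm\langle\cdot,g_j\rangle_{\mbb R}>L/(2M)\}$; applying Markov's inequality with the test functional $\pm t\,g_j$ for a fixed $t>0$ gives
\begin{align*}
\mbf P\left(\pm\langle A^M_N,g_j\rangle_{\mbb R}>L/(2M)\right)\le e^{-NtL/(2M)}\,\mbf E\exp\left\{N\langle A^M_N,\pm t\,g_j\rangle_{\mbb R}\right\}.
\end{align*}
Taking $\frac1N\ln$, letting $N\to\infty$, and combining the $4M$ estimates via Proposition \ref{limsup} yields
\begin{align*}
\limsup_{N\to\infty}\frac1N\ln\mbf P\left(A^M_N\in K_L^c\right)\le-\frac{tL}{2M}+\max_{1\le j\le 2M}\max_{s=\pm1}\Lambda^{M,\tau}(s t\,g_j),
\end{align*}
and by Lemma \ref{sec5lem1} the double maximum on the right is a finite constant independent of $L$, so the right-hand side tends to $-\infty$ as $L\to\infty$, which is exactly \eqref{exptight} for every $\alpha<\infty$. (Alternatively, one may argue directly from the Gaussian structure: by \eqref{sec5Ep}--\eqref{sec5Cor} each coordinate of $p^M_N+\bm{i} q^M_N$ is Gaussian with mean $\mcal{O}(1)$ and variance $\mcal{O}(N)$, so each coordinate of $A^M_N$ is Gaussian with mean $\mcal{O}(1/N)$ and variance $\mcal{O}(1/(N\tau^2))$, and a Chernoff bound together with Proposition \ref{Fernique} gives the same conclusion.)

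With exponential tightness established, Theorem \ref{GE} applies verbatim and delivers that $\{A^M_N\}_{N\in\mbb N}$ satisfies an LDP on $\mbb C^M$ with the convex, good rate function $(\Lambda^{M,\tau})^*(z)=\sup_{\bm\lambda\in\mbb C^M}\{\langle\bm\lambda,z\rangle_{\mbb R}-\Lambda^{M,\tau}(\bm\lambda)\}$, which is the assertion. I do not anticipate a genuine obstacle here: the only point demanding a little care is bookkeeping the threshold on $\tau$ so that Lemma \ref{sec5lem1} applies to every subsystem $k=1,\ldots,M$ at once, while the exponential tightness step is routine precisely because the state space is finite dimensional and $\Lambda^{M,\tau}$ is finite everywhere.
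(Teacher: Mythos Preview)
Your proposal is correct and follows essentially the same route as the paper: both arguments invoke Lemma \ref{sec5lem1} for the finiteness and Gateaux differentiability of $\Lambda^{M,\tau}$, then establish exponential tightness on the compact balls $K_L=\{z\in\mbb C^M:\|z\|\le L\}$ by applying Markov's inequality coordinate-wise (the paper plugs in $\bm\lambda=(0,\ldots,\pm1,\ldots,0)$ and $(0,\ldots,\pm\bm i,\ldots,0)$, which is exactly your orthonormal basis $\{g_j\}_{j=1}^{2M}$), combine the resulting $4M$ bounds via Proposition \ref{limsup}, and conclude with Theorem \ref{GE}. The only cosmetic difference is that the paper first groups real and imaginary parts into $\|A^{M,k}_N\|$ before splitting, whereas you go directly to the $2M$ real coordinates; the substance is identical.
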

\begin{proof}
It follows from Lemma \ref{sec5lem1} that for each $\bm{\lambda}\in\mbb C^M$,
\begin{align}\label{sec5k6}
	\Lambda^{M,\tau}(\bm{\lambda})=\lim\limits_{N\to \infty}\frac{1}{N}\ln\mbf E\exp\left\{N\left\langle  \bm{\lambda},A^M_N \right\rangle_\mbb R\right\}<\infty.
\end{align} 
In particular, we take $\bm{\lambda}=\left(0,\ldots,0,1,0,\ldots,0\right)$ in \eqref{sec5k6} with $1$ being its $k$th component. Then we obtain
\begin{align}\label{zetak1}
	\zeta_{k,1}:=\lim\limits_{N\to \infty}\frac{1}{N}\ln\mbf E\exp\left\{N \Re A^{M,k}_N\right\}<\infty,
\end{align}
where $A^{M,k}_N$ is the $k$th argument of $A^M_N$. Taking $\bm{\lambda}=\left(0,\ldots,0,-1,0,\ldots,0\right)$ in \eqref{sec5k6} with $-1$ being its $k$th component  yields
\begin{align}\label{zetak2}
	\zeta_{k,2}:=\lim\limits_{N\to \infty}\frac{1}{N}\ln\mbf E\exp\left\{-N\Re A^{M,k}_N\right\}<\infty.
\end{align}
For each $L>0$, using Markov's inequality one has
\begin{align*}
	\mbf P\left(\Re A^{M,k}_N>\frac{L}{2M}\right)=\mbf P\left(\exp\left\{N\Re A^{M,k}_N\right\}>\exp\left\{\frac{NL}{2M}\right\}\right)\leq\exp\left\{-\frac{NL}{2M}\right\}\mbf E\exp\left\{N\Re A^{M,k}_N\right\}
\end{align*}
and
\begin{align*}
\mbf P\left(\Re A^{M,k}_N<-\frac{L}{2M}\right)=\mbf P\left(\exp\left\{-N\Re A^{M,k}_N\right\}>\exp\left\{\frac{NL}{2M}\right\}\right)\leq\exp\left\{-\frac{NL}{2M}\right\}\mbf E\exp\left\{-N\Re A^{M,k}_N\right\}.
\end{align*}

Hence, \eqref{zetak1} leads to
\begin{align*}
	\limsup_{N\to \infty}\frac{1}{N}\ln\mbf P\left(\Re A^{M,k}_N>\frac{L}{2M}\right)\leq-\frac{L}{2M}+\zeta_{k,1},
\end{align*}
and \eqref{zetak2} leads to
\begin{align*}
\limsup_{N\to \infty}\frac{1}{N}\ln\mbf P\left(\Re A^{M,k}_N<-\frac{L}{2M}\right)\leq-\frac{L}{2M}+\zeta_{k,2}.
\end{align*}
Combining the above formulas and Proposition \ref{limsup}, we have
\begin{align}\label{sec5k7}
\limsup_{N\to \infty}\frac{1}{N}\ln\mbf P\left(|\Re A^{M,k}_N|>\frac{L}{2M}\right)\leq \max\left\{-\frac{L}{2M}+\zeta_{k,1},-\frac{L}{2M}+\zeta_{k,2}\right\}=-\frac{L}{2M}+\zeta'_k,
\end{align}
with $\zeta'_k=\max\{\zeta_{k,1},\zeta_{k,2}\}$.
By taking $\bm{\lambda}=\left(0,\ldots,0,\bm{i},0,\ldots,0\right)$ (resp. $\bm{\lambda}=\left(0,\ldots,0,-\bm{i},0,\ldots,0\right)$) in \eqref{sec5k6} with $\bm{i}$ (resp. $-\bm{i}$) being its $k$th component, and repeating the above procedure, we have
\begin{align}\label{sec5k8}
	\limsup_{N\to \infty}\frac{1}{N}\ln\mbf P\left(|\Im A^{M,k}_N|>\frac{L}{2M}\right)\leq -\frac{L}{2M}+\zeta''_k,
\end{align}
for some $\zeta''_k<\infty$. 

Further, it holds that for every $k=1,2,\ldots,M$,
\begin{align*}
	\mbf P\left(\left\| A^{M,k}_N\right\|>\frac{L}{M}\right)\leq\mbf P\left(|\Re A^{M,k}_N|>\frac{L}{2M}\right)+\mbf P\left(|\Im A^{M,k}_N|>\frac{L}{2M}\right),
\end{align*}
which together with \eqref{sec5k7}, \eqref{sec5k8} and Proposition \ref{limsup} yields
\begin{align}\label{sec5k9}
	\limsup_{N\to \infty}\frac{1}{N}\ln\mbf P\left(\left\| A^{M,k}_N\right\|>\frac{L}{M}\right)\leq-\frac{L}{2M}+\zeta_k,
\end{align}
with $\zeta_k=\max\{\zeta'_k,\zeta''_k\}$.
For $L>0$, define $K_L=\left\{z\in \mbb C^M:~\left\|z\right\|\leq L\right\}$, which is a compact subset of $\mbb C^M$. Then it holds that 
\begin{align}\label{sec5k10}
	\mbf P\left(A^M_N\in K_L^c\right)&=\mbf P\left(\left\| A^{M}_N\right\|>L\right)\leq\mbf P\left(\sum_{k=1}^{M}\left\| A^{M,k}_N\right\|>L\right)\leq\mbf P\left(\bigcup_{k=1}^{M}\left\{\left\| A^{M,k}_N\right\|>\frac{L}{M}\right\}\right)\nonumber\\
  &\leq\sum_{k=1}^{M}\mbf P\left(\left\| A^{M,k}_N\right\|>\frac{L}{M}\right).
\end{align}
Substituting \eqref{sec5k9} into \eqref{sec5k10} and using Proposition \ref{limsup}, one has
\begin{align*}
	\limsup_{N\to \infty}\frac{1}{N}\ln\mbf P\left( A^{M}_N\in K_L^c\right)\leq-\frac{L}{2M}+\max_{k=1,2,\ldots,M}\zeta_k.
\end{align*}
Then, one immediately has
\begin{align*}
	\lim_{L\to \infty}\limsup_{N\to \infty}\frac{1}{N}\ln\mbf P\left( A^{M}_N\in K_L^c\right)=-\infty,
\end{align*}
which implies the exponential tightness of $\{A^M_N\}_{N\in\mbb N}$. By Lemma \ref{sec5lem1}, the exponential tightness of $\{A^M_N\}_{N\in\mbb N}$ and Theorem \ref{GE}, we complete the proof.
\end{proof}
\begin{lem}\label{sec5tho5.3}
Let \textbf{Assumptions \ref{assum1}, \ref{assum2} and \ref{assum3}} hold.  For each fixed $M\in\mbb N$ with $\eta_{M}>0$,  we have that for all   sufficiently  small stepsize $\tau$, 
	\begin{align}\label{sec5k13}
	(\Lambda^{M,\tau})^*(z)=\sum_{k=1}^{M}\frac{\tau\left(4-(\rm{tr}(A(k^2\tau)))^2\right)}{4\left[a(k^2\tau)b(k^2\tau)-c^2(k^2\tau)\right]\alpha_k^2}\left[b(k^2\tau)(\Re z_k)^2+a(k^2\tau)(\Im z_k)^2+2c(k^2\tau)\Re z_k\Im z_k\right].
	\end{align}
\end{lem}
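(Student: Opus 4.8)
The plan is to compute the Fenchel--Legendre transform $(\Lambda^{M,\tau})^*$ of the function $\Lambda^{M,\tau}$ obtained in Lemma \ref{sec5lem1} directly, exploiting the fact that $\Lambda^{M,\tau}(\bm\lambda)=\sum_{k=1}^M\Lambda_k(\lambda_k)$ decouples into a sum over $k$, each summand being a quadratic form in the two real variables $\Re\lambda_k,\Im\lambda_k$. Since $(\Lambda^{M,\tau})^*(z)=\sup_{\bm\lambda\in\mbb C^M}\{\langle\bm\lambda,z\rangle_{\mbb R}-\Lambda^{M,\tau}(\bm\lambda)\}=\sum_{k=1}^M\sup_{\lambda_k\in\mbb C}\{\langle\lambda_k,z_k\rangle_{\mbb R}-\Lambda_k(\lambda_k)\}$, it suffices to handle each $k$ separately: maximize $\Re z_k\,\Re\lambda_k+\Im z_k\,\Im\lambda_k-\Lambda_k(\lambda_k)$ over $(\Re\lambda_k,\Im\lambda_k)\in\mbb R^2$.

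First I would record that, writing $\gamma_k:=\frac{\alpha_k^2}{4\tau\sin^2(\theta_k)}$ and $x:=\Re\lambda_k$, $y:=\Im\lambda_k$, the summand $\Lambda_k(\lambda_k)=\gamma_k\big(a(k^2\tau)x^2+b(k^2\tau)y^2-2c(k^2\tau)xy\big)$ is the quadratic form associated with the symmetric matrix $\gamma_k\begin{pmatrix} a & -c\\ -c & b\end{pmatrix}$ (all of $a,b,c$ evaluated at $k^2\tau$). Under \textbf{Assumptions \ref{assum1}--\ref{assum3}}, for all sufficiently small $\tau$ one has $a(k^2\tau)>0$, $b(k^2\tau)>0$, and by \textbf{Assumption \ref{assum3}} $c^2(k^2\tau)<(1-\eta)^2 a(k^2\tau)b(k^2\tau)<a(k^2\tau)b(k^2\tau)$, so the matrix is positive definite; I would invoke here the remarks following \textbf{Assumption \ref{assum3}} (positivity of $a,b$, analogous to Lemmas 3.2 and 5.1 of \cite{LDPosc}). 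Consequently the supremum of an affine function minus a positive-definite quadratic form is attained at the unique critical point and equals one quarter of the linear part evaluated against the inverse matrix: for a form $q(v)=v^\top S v$ with $S\succ 0$ and linear part $\langle w,v\rangle$, $\sup_v\{\langle w,v\rangle - q(v)\} = \tfrac14 w^\top S^{-1} w$. Applying this with $v=(x,y)^\top$, $w=(\Re z_k,\Im z_k)^\top$, and $S=\gamma_k\begin{pmatrix} a & -c\\ -c & b\end{pmatrix}$, whose inverse is $\frac{1}{\gamma_k(ab-c^2)}\begin{pmatrix} b & c\\ c & a\end{pmatrix}$, yields
\begin{align*}
\sup_{\lambda_k\in\mbb C}\big\{\langle\lambda_k,z_k\rangle_{\mbb R}-\Lambda_k(\lambda_k)\big\}
=\frac{1}{4\gamma_k\,(a(k^2\tau)b(k^2\tau)-c^2(k^2\tau))}\Big[b(k^2\tau)(\Re z_k)^2+a(k^2\tau)(\Im z_k)^2+2c(k^2\tau)\Re z_k\Im z_k\Big].
\end{align*}
Substituting $\gamma_k=\frac{\alpha_k^2}{4\tau\sin^2(\theta_k)}$ and then using \eqref{symtheta}, namely $4\sin^2(\theta_k)=4-(\mathrm{tr}(A(k^2\tau)))^2$ (valid under \textbf{Assumption \ref{assum2}}), converts the prefactor into $\frac{\tau\,(4-(\mathrm{tr}(A(k^2\tau)))^2)}{4\,[a(k^2\tau)b(k^2\tau)-c^2(k^2\tau)]\,\alpha_k^2}$, which is exactly the coefficient in \eqref{sec5k13}. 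Summing over $k=1,\dots,M$ gives the claimed formula; note $\alpha_k=\alpha\sqrt{\eta_k}>0$ is used (hence the hypothesis $\eta_M>0$) so every summand is well defined.

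The only genuinely delicate point is the positive-definiteness of the quadratic form, i.e.\ that $a(k^2\tau)b(k^2\tau)-c^2(k^2\tau)>0$ and $a(k^2\tau),b(k^2\tau)>0$ for all $k\in\{1,\dots,M\}$ and all sufficiently small $\tau$; this is where \textbf{Assumptions \ref{assum1} and \ref{assum3}} enter and where one must be careful that "sufficiently small $\tau$" can be chosen uniformly over the finitely many indices $k\le M$ (it can, since $k^2\tau<h_i$ for $k\le M$ as soon as $\tau<h_i/M^2$). Once this is in hand, the supremum is classical and the rest is the bookkeeping indicated above, together with the identification $\sin^2(\theta_k)=\tfrac14(4-(\mathrm{tr}A)^2)$ from \eqref{symtheta}. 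I would also remark that $(\Lambda^{M,\tau})^*$ is finite everywhere on $\mbb C^M$ (the form being coercive), consistent with its being a good rate function by Lemma \ref{sec5lem5.2}.
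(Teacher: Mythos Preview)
Your proposal is correct and follows essentially the same approach as the paper: both decouple $(\Lambda^{M,\tau})^*$ into $\sum_k\Lambda_k^*$, use \textbf{Assumption \ref{assum3}} to ensure the quadratic form $a x^2+by^2-2cxy$ is positive definite (so the supremum is finite and attained), compute the maximizer, and then convert $\sin^2(\theta_k)$ into $(4-(\mathrm{tr}A)^2)/4$ via \eqref{symtheta}. The only cosmetic difference is that the paper carries out the maximization by explicitly solving $\partial_x f_k=\partial_y f_k=0$ and substituting back, whereas you invoke the standard closed-form $\sup_v\{\langle w,v\rangle-v^\top Sv\}=\tfrac14 w^\top S^{-1}w$ directly; the computations are equivalent.
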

\begin{proof}
It follows from  \eqref{sec5k11} that the Fenchel--Legendre transform of $\Lambda^{M,\tau}$ is
\begin{align}\label{sec5lambda*}
	(\Lambda^{M,\tau})^*(z)=&\sup_{\bm{\lambda}\in\mbb C^M}\left\{\left\langle\bm{\lambda},z\right\rangle_{\mbb R}-\Lambda^{M,\tau}(\bm{\lambda})\right\}\nonumber\\
	=&\sup_{\lambda_1\in\mbb C}\sup_{\lambda_2\in\mbb C}\cdots\sup_{\lambda_M\in\mbb C}\left\{\sum_{k=1}^{M}\left\langle\lambda_k,z_k\right\rangle_{\mbb R}-\Lambda_k(\lambda_k)\right\}\nonumber\\
	=&\sum_{k=1}^{M}\sup_{\lambda_k\in\mbb C}\left\{\left\langle\lambda_k,z_k\right\rangle_{\mbb R}-\Lambda_k(\lambda_k)\right\}
	=:\sum_{k=1}^{M}\Lambda_k^*(z_k).
\end{align}
According to \eqref{sec5lam},
\begin{align*}
&\Lambda_k^*(z_k)
	=\sup_{\lambda_k\in\mbb C}\left\{\left\langle\lambda_k,z_k\right\rangle_{\mbb R}-\Lambda_k(\lambda_k)\right\}\\
	=&\sup_{\left(\Re\lambda_k,\Im\lambda_k\right)\in\mbb R^2}\left\{\Re\lambda_k\Re z_k+\Im\lambda_k\Im z_k-\frac{\alpha_k^2}{4\tau\sin^2(\theta_k)}\left[a(k^2\tau)(\Re\lambda_k)^2+b(k^2\tau)(\Im\lambda_k)^2-2c(k^2\tau)\Re\lambda_k\Im\lambda_k\right]\right\}\\
	=&\sup_{\left(x,y\right)\in\mbb R^2}\left\{(\Re z_k)x+(\Im z_k)y-\frac{\alpha_k^2}{4\tau\sin^2(\theta_k)}\left[a(k^2\tau)x^2+b(k^2\tau)y^2-2c(k^2\tau)xy\right]\right\}\\
	=&:\sup_{\left(x,y\right)\in\mbb R^2}f_k(x,y).
\end{align*}

Under \textbf{Assumption \ref{assum3}}, if $\tau$ is sufficiently small, then for each $k=1,2,\ldots,M$, $x,y\in\mbb R$,
\begin{align*}
	\left|\frac{2c(k^2\tau)xy}{a(k^2\tau)x^2+b(k^2\tau)y^2}\right|<(1-\eta)\frac{2\sqrt{a(k^2\tau)b(k^2\tau)}|xy|}{a(k^2\tau)x^2+b(k^2\tau)y^2}\leq(1-\eta)\frac{a(k^2\tau)x^2+b(k^2\tau)y^2}{a(k^2\tau)x^2+b(k^2\tau)y^2}=1-\eta,
\end{align*}
which implies $1-\frac{2c(k^2\tau)xy}{a(k^2\tau)x^2+b(k^2\tau)y^2}>\eta$ for every $x$, $y\in\mbb R$.
Then, we have
\begin{align*}
	\lim_{(x,y)\to\infty}f_k(x,y)=&\lim_{(x,y)\to\infty}\left(a(k^2\tau)x^2+b(k^2\tau)y^2\right)\left\{\frac{(\Re z_k)x+(\Im z_k)y}{a(k^2\tau)x^2+b(k^2\tau)y^2}\right.\nonumber\\
	&\left.-\frac{\alpha_k^2}{4\tau\sin^2(\theta_k)}\left[1-\frac{2c(k^2\tau)xy}{a(k^2\tau)x^2+b(k^2\tau)y^2}\right]\right\}=-\infty,
\end{align*}
which along with the continuity of $f_k$, implies that there exist $x_k,y_k$ satisfying $-\infty<x_k,y_k<+\infty$ such that $\sup_{\left(x,y\right)\in\mbb R^2}f_k(x,y)=f_k(x_k,y_k)$. Then, it holds that
\begin{align*}
\frac{\partial f_k(x_k,y_k)}{\partial x}&=\Re z_k-\frac{\alpha^2_k}{4\tau\sin^2(\theta_k)}\left[2a(k^2\tau)x_k-2c(k^2\tau)y_k\right]=0,\\
\frac{\partial f_k(x_k,y_k)}{\partial y}&=\Im z_k-\frac{\alpha^2_k}{4\tau\sin^2(\theta_k)}\left[2b(k^2\tau)y_k-2c(k^2\tau)x_k\right]=0.
\end{align*}

For a given $M\in\mbb N$ with $\eta_M>0$,  $\alpha_k=\alpha\sqrt{\eta_k}>0$, $k=1,2,\ldots,M$. 
Then, we obtain
\begin{align*}
	x_k&=\frac{2\tau\sin^2(\theta_k)\left(\Re z_k b(k^2\tau)+\Im z_k c(k^2\tau)\right)}{\left[a(k^2\tau)b(k^2\tau)-c^2(k^2\tau)\right]\alpha_k^2},\\
	y_k&=\frac{2\tau\sin^2(\theta_k)\left(\Im z_k a(k^2\tau)+\Re z_k c(k^2\tau)\right)}{\left[a(k^2\tau)b(k^2\tau)-c^2(k^2\tau)\right]\alpha_k^2},
\end{align*}
which leads to
\begin{align*}
	\Lambda^*_k(z_k)=&\Re z_k\frac{2\tau\sin^2(\theta_k)\left(\Re z_k b+\Im z_k c\right)}{\left(ab-c^2\right)\alpha_k^2}+\Im z_k \frac{2\tau\sin^2(\theta_k)\left(\Im z_k a+\Re z_k c\right)}{\left(ab-c^2\right)\alpha_k^2} \nonumber\\
	&-\frac{\alpha_k^2}{4\tau\sin^2(\theta_k)}\left[a\frac{4\tau^2\sin^4(\theta_k)\left(\Re z_k b+\Im z_k c\right)^2}{\left(ab-c^2\right)^2\alpha_k^4}+b\frac{4\tau^2\sin^4(\theta_k)\left(\Im z_k a+\Re z_k c\right)^2}{\left(ab-c^2\right)^2\alpha_k^4}\right.\\
	&\left.-2c\frac{4\tau^2\sin^4(\theta_k)\left(\Re z_k b+\Im z_k c\right)\left(\Im z_k a+\Re z_k c\right)}{\left(ab-c^2\right)^2\alpha_k^4}\right]\nonumber\\
	=&\frac{2\tau\sin^2(\theta_k)}{\left(ab-c^2\right)\alpha_k^2}\left[b(\Re z_k)^2+2c\Re z_k\Im z_k+a(\Im z_k)^2\right]\nonumber-\frac{\tau\sin^2(\theta_k)}{\left(ab-c^2\right)^2\alpha_k^2}\left[\right. a(\Re z_k b+\Im z_k c)^2\nonumber\\
	&+b(\Im z_k a+\Re z_k c)^2-2c(\Re z_k b+\Im z_k c)(\Im z_k a+\Re z_k c)\left.\right].
\end{align*}
Direct computations give
\begin{align*}
	 &a(\Re z_k b+\Im z_k c)^2+b(\Im z_k a+\Re z_k c)^2-2c(\Re z_k b+\Im z_k c)(\Im z_k a+\Re z_k c)\nonumber\\
	 =&\left(ab-c^2\right)\left[b(\Re z_k)^2+2c\Re z_k\Im z_k+a(\Im z_k)^2\right].
\end{align*}
In this way,   we have
\begin{align}\label{sec5k12}
	\Lambda_k^*(z_k)=\frac{\tau\sin^2(\theta_k)}{\left[a(k^2\tau)b(k^2\tau)-c^2(k^2\tau)\right]\alpha_k^2}\left[b(k^2\tau)(\Re z_k)^2+a(k^2\tau)(\Im z_k)^2+2c(k^2\tau)\Re z_k\Im z_k\right].
\end{align}
By  \eqref{symtheta}, \eqref{sec5lambda*} and \eqref{sec5k12}, 
we complete the proof.
\end{proof}

Now we  give the proof of Theorem \ref{sec5tho5.5}.

\vspace{2mm}
\textit{Proof of Theorem \ref{sec5tho5.5}}: 

(1) Clearly, $F$ is a continuous mapping from $\mbb C^M$ to $H^0$ (see \eqref{sec5F}).
By  Lemmas \ref{contraction} and  \ref{sec5lem5.2}, we deduce that $\{B^M_N\}_{N\in\mbb N}$, with $B^M_N=F(A^M_N)$, satisfies an LDP on $H^0$ with the good rate function 
\begin{align*}
	&I^{M,\tau}(x)=(\Lambda^{M,\tau})^*(F^{-1}(x))\nonumber\\
	=&\begin{cases}
	\sum_{k=1}^{M}\frac{\tau\left(4-(\tr(A(k^2\tau)))^2\right)}{4\left[a(k^2\tau)b(k^2\tau)-c^2(k^2\tau)\right]\alpha_k^2}\left[b(k^2\tau)(\Re\left\langle x,e_k\right\rangle_{\mbb C})^2+a(k^2\tau)(\Im \left\langle x,e_k\right\rangle_{\mbb C})^2\right.&\\
	\phantom{\sum_{k=1}^{M}\frac{\tau\left(4-(\tr(A))^2\right)}{4\left[a(k^2\tau)b(k^2\tau)-c^2(k^2\tau)\right]\alpha_k^2}}+2c(k^2\tau)\Re\left\langle x,e_k\right\rangle_{\mbb C}\Im\left\langle x,e_k\right\rangle_{\mbb C}\big], \qquad &\text{if}~x\in H_M, \\
	+\infty, &\text{otherwise}.
	\end{cases}
\end{align*}

(2) Denote $J_{mod}^{M,\tau}(z)=\frac{(\Lambda^{M,\tau}(z))^*}{\tau}$.  Then $I^{M,\tau}_{mod}(x)=J_{mod}^{M,\tau}(F^{-1}(x))$. It follows from \textbf{Assumptions \ref{assum2} and \ref{assum5}} that $a_{12}\sim h$, $a_{21}\sim -h$ and $2-\tr(A)=1+a_{11}a_{22}-a_{12}a_{21}-a_{11}-a_{22}=(a_{11}-1)(a_{22}-1)-a_{12}a_{21}\sim h^2$. Hence $4-(\tr(A))^2=(2+\tr(A))(2-\tr(A))\sim 4h^2$. In addition, it holds that $a_{11}b_1+a_{12}b_2-b_1=(a_{11}-1)b_1+a_{12}b_2\sim h$. These imply $a\sim h^2.$ Further, $a_{21}b_1-a_{11}b_2+b_2=\mcal O(h^2)$, $a_{21}b_1b_2\left(2-\tr(A)\right)=\mcal O(h^4)$, $a_{11}b_2^2\left(2-\tr(A)\right)\sim h^2$, and hence $b\sim h^2$. 
Similarly, we have $c=\mcal O(h^3)$, which leads to $ab-c^2\sim h^4$. 
These implies that under  \textbf{Assumptions \ref{assum5}},  \textbf{Assumptions \ref{assum3}} holds. 
Accordingly, it follows from \eqref{sec5k13} that for each $z\in \mbb C^M$
\begin{align}\label{sec5k17}
	\lim_{\tau\to 0}	J^{\tau}_{mod}(z)=&\lim_{\tau\to 0}\sum_{k=1}^{M}\frac{\left(4-(\rm{tr}(A))^2\right)}{4\left[a(k^2\tau)b(k^2\tau)-c^2(k^2\tau)\right]\alpha_k^2}\left[b(k^2\tau)(\Re z_k)^2+a(k^2\tau)(\Im z_k)^2+2c(k^2\tau)\Re z_k\Im z_k\right]\nonumber\\
	=&\sum_{k=1}^{M}\lim_{\tau\to 0}\frac{4(k^2\tau)^4\left((\Re z_k)^2+(\Im z_k)^2\right)+\mcal O(\tau^5)}{4(k^2\tau)^4\alpha_k^2}=\sum_{k=1}^{M}\frac{\|z_k\|^2}{\alpha_k^2}.
\end{align}
Hence,
\begin{align*}
\lim_{\tau\to 0}I^{M,\tau}_{mod}(x)=\lim_{\tau\to 0}J^{\tau}_{mod}(F^{-1}(x))
=\begin{cases}
\sum_{k=1}^{M}\frac{\|\left\langle x,e_k\right\rangle_{\mbb C}\|^2}{\alpha_k^2}, \qquad &\text{if}~x\in H_M, \\
+\infty, &\text{otherwise}.
\end{cases}
\end{align*}
Note that for each $x\in H_M$,
\begin{align*}
	\sum_{k=1}^{M}\frac{\|\left\langle x,e_k\right\rangle_{\mbb C}\|^2}{\alpha_k^2}=\frac{1}{\alpha^2}\sum_{k=1}^{M}\frac{\|\left\langle x,e_k\right\rangle_{\mbb C}\|^2}{\eta_k}=\frac{1}{\alpha^2}\left\|Q^{-\frac{1}{2}}P_Mx\right\|_{H^0}^2=\frac{1}{\alpha^2}\left\|Q^{-\frac{1}{2}}x\right\|_{H^0}^2.
\end{align*}
In this way, we have
\begin{align}\label{sec5k18}
\lim_{\tau\to 0}I^{M,\tau}_{mod}(x)
=\begin{cases}
\frac{1}{\alpha^2}\left\|Q^{-\frac{1}{2}}x\right\|_{H^0}^2, \qquad &\text{if}~x\in H_M, \\
+\infty, &\text{otherwise}.
\end{cases}
\end{align} 
	Since  $\eta_{1}\geq\eta_{2}\geq\cdots\geq\eta_{M}>0$, $Q^{\frac{1}{2}}_M(H^0)=H_M$. Hence $I^M$ becomes
	\begin{align*}
	I^M(x)=\begin{cases}
	\frac{1}{\alpha^2}\left\|Q^{-\frac{1}{2}}x\right\|_{H^0}^2, \qquad &\text{if}~x\in H_M, \\
	+\infty, &\text{otherwise}.
	\end{cases}
	\end{align*}
	By the above formula and \eqref{sec5k18},
	$\lim\limits_{\tau\to 0}I^{M,\tau}_{mod}(x)=I^M(x)$. 


(3) \emph{Case 1: There are finitely many $0$ in $\{\eta_k\}_{k\in\mbb N}$.} \\
In this case, for each $M\in\mbb N$, $\eta_{1}\geq\eta_{2}\geq\cdots\geq\eta_{M}>0$. 
Thus, \eqref{sec5k19} and the second case in the proof of Theorem \ref{sec4tho4.4} yield \eqref{wasym1}.

\emph{Case 2: There are infinitely many $0$ in $\{\eta_k\}_{k\in\mbb N}$, i.e., for some $l\in \mbb N$, $\eta_{l}>\eta_{l+1}=\eta_{l+2}=\cdots=0$.}\\
For this case, we take $M=l$ and obtain that $I^M(x)=I(x)$ (see the first case in the proof of Theorem \ref{sec4tho4.4}). Then, it follows from \eqref{sec5k19} that \eqref{wasym1} holds. \hfill$\square$

\subsection{Proof of Theorem \ref{sec5tho5.7}}
In this part, we consider the LDP of $\{B^M_N\}_{N\in\mbb N}$ for full discretizations of \eqref{xde1}, based on the spatial spectral Galerkin method \eqref{Galerkin} and temporal non-symplectic methods.
 Theorem \ref{sec5tho5.7} indicates that  $\left\{u^M_n\right\}_{M,n\in\mbb N}$ can not weakly asymptotically preserve the LDP of $\{B_T\}_{T>0}$.

\textit{Proof of Theorem \ref{sec5tho5.7}}:
Recall  $\hat{\alpha}^k_{n}=\left[\det(A(k^2\tau))\right]^{\frac{n-1}{2}}\sin(n\theta_k)/\sin(\theta_k)$. Under \textbf{Assumption \ref{assum4}},  for sufficiently small $\tau$, $\left|\hat{\alpha}^k_{n}\right|\leq R^{n-1}_{k,\tau}/\sin(\theta_k)$ for some constant $R_{k,\tau}<1$, $k=1,2,\ldots,M$. Denote $T_{M,\tau}=\max_{k=1,2,\ldots,M}R_{k,\tau}$ and then $T_{M,\tau}<1$.
By \eqref{sec5kp} and \eqref{sec5kq}, we have
\begin{align*}
	\left|\mbf Ep^{M,k}_N\right|&=\left|-\det(A)\hat{\alpha}^k_{N-1}p^{M,k}_0+\hat{\alpha}^k_{N}\left(a_{11}p^{M,k}_0+a_{12}q^{M,k}_0\right)\right|\nonumber\\
	&\leq\frac{1}{\sin(\theta_k)}\left|p_0^{M,k}\right|\left(T_{M,\tau}^{N-2}+T_{M,\tau}^{N-1}|a_{11}|\right)+\frac{1}{\sin(\theta_k)}\left|q_0^{M,k}\right||a_{12}|T_{M,\tau}^{N-1}\nonumber\\
	&\leq K(M,\tau).
\end{align*}
Similarly, one has
$\left|\mbf Eq^{M,k}_N\right|
\leq K(M,\tau).$
It follows from \eqref{sec5kp} that
\begin{align*}
\mbf {Var}(p^{M,k}_N)=&\tau\alpha_k^2\sum_{j=0}^{N-1}\left[-\det(A)\hat\alpha^k_{N-2-j}b_1+(a_{11}b_1+a_{12}b_2)\hat\alpha^k_{N-1-j}\right]^2.
\end{align*}
Then, H\"older's inequality and the fact $\left|\hat{\alpha}^k_{n}\right|\leq R^{n-1}_{k,\tau}/\sin(\theta_k)$  yield
\begin{align*}
\left|\mbf {Var}(p^{M,k}_N)\right|\leq& K(M,\tau)\sum_{j=0}^{N-1}\left[\left(\hat\alpha^k_{N-2-j}\right)^2+\left(\hat\alpha^k_{N-1-j}\right)^2\right]\nonumber\\
\leq&K(M,\tau)\sum_{j=0}^{N-1}\left(T_{M,\tau}^{2(N-2-j)}+T_{M,\tau}^{2(N-1-j)}\right)\nonumber\\
=&K(M,\tau)\sum_{j=0}^{N-1}\left(T_{M,\tau}^{2j}+T_{M,\tau}^{2(j-1)}\right)\leq K(M,\tau),
\end{align*}
where we use the fact $\sum_{k=0}^{N-1}r^k<\frac{1}{1-r}$ for each $r\in(0,1)$. Analogously, we obtain
\begin{align*}
	\left|\mbf {Var}(p^{M,k}_N)\right|\leq K(M,\tau),\qquad \left|\mbf {Cor}(p^{M,k}_N,q^{M,k}_N)\right|\leq K(M,\tau).
\end{align*}
Thus, combining the above estimates, we have
\begin{align*}
\left|\mbf{E}\left\langle\lambda_k,p^{M,k}_N+\bm{i}q^{M,k}_N\right\rangle_{\mbb R}\right|+\left|\mbf{Var}\left\langle\lambda_k,p^{M,k}_N+\bm{i}q^{M,k}_N\right\rangle_{\mbb R}\right|<K(M,\tau,\bm{\lambda}),\quad k=1,2,\ldots,M.
\end{align*}
Following the proof of Lemma \ref{sec5lem1}, one can show that the logarithmic moment generating function for $\{A^M_N\}_{N\in\mbb N}$ is $\Lambda^{M,\tau}=0$. Then, we conclude that $\{A^M_N\}_{N\in\mbb N}$ satisfies an LDP on $\mbb C^M$ with the good rate function 
\begin{align}\label{sec5k20}
R(z)=\begin{cases}
0, \qquad &\text{if}~z=0, \\
+\infty, &\text{otherwise}.
\end{cases}
\end{align}
Combining \eqref{sec5k20} and Lemma \ref{contraction}, we have that $\{B^M_N\}_{N\in\mbb N}$ satisfies an LDP on $H^0$ with the good rate function
\begin{align}\label{sec5k21}
I_{ns}^{M,\tau}(x)=R(F^{-1}(x))=\begin{cases}
0, \qquad &\text{if}~x=0, \\
+\infty, &\text{otherwise}.
\end{cases}
\end{align}
It can be verified that \eqref{wasym1} does not hold. \hfill$\square$

\section{Extension to the case of complex-valued noises}\label{Sec6}
In this part, we study the LDP of $\{B_T\}_{T>0}$ for the stochastic \xde equation \eqref{xde1} driven by complex-valued noises. Let $W_1$ be a $U^0$-valued $Q_1$-Winner process and   $W_2$  a $U^0$-valued $Q_2$-Winner process, such that 
$W_1(t)=\sum_{k=1}^{\infty}Q^\frac{1}{2}_1e_k\beta^{(1)}_{k}(t)$ and $W_2(t)=\sum_{k=1}^{\infty}Q^\frac{1}{2}_2e_k\beta^{(2)}_{k}(t)$. Here $Q_1$ and $Q_2$ are two nonnegative symmetric operators on $U^0$ with finite traces. 
$\left\{\beta^{(1)}_k(t)\right\}_{t\geq0}$, $k=1,2,\ldots$ are mutually independent standard Brownian  motions, and $\left\{\beta^{(2)}_k(t)\right\}_{t\geq0}$, $k=1,2,\ldots$ is another family of mutually independent standard Brownian  motions.  
In addition, we assume that  $\left\{\beta^{(1)}_k(t)\right\}_{t\geq0}$ and  $\left\{\beta^{(2)}_j(t)\right\}_{t\geq0}$ mutually independent for all $k,j=1,2,\ldots$ with $k\neq j$. Also assume that for all $k\in\mbb N$, $t>s\geq0$,  $\left(\beta^{(1)}_k(t)-\beta^{(1)}_k(s),\beta^{(2)}_k(t)-\beta^{(2)}_k(s)\right)$
obey the two-dimensional normal distribution with expectation $(0,0)$ and covariance matrix
\begin{align*}
\left(
\begin{array}{cc}
t-s&\rho(t-s)\\
\rho(t-s)&t-s
\end{array}\right),
\end{align*}
for some constant $\rho\in[-1,1]$.
The driving process for stochastic \xde equation \eqref{xde1}  is  $W(t)=W_1(t)+\bm{i}W_2(t)$.

Let $\mcal N^2_{W_1}(0,T;L^2_0)$ denote the set
\begin{align*}
\left\{\left.\Phi:\left[0,T\right]\times \Omega\to\mcal L_2(Q_1^\frac{1}{2}(U^0),U^0)\right|\Phi ~\text{is predicable and} \mbf~ \mbf E\int_{0}^{T}\left\|\Phi(s)\circ Q_1^{\frac{1}{2}}\right\|^2_{\mcal L_2(U^0,U^0)}\ud s<\infty\right\},
\end{align*}
and $\mcal N^2_{W_2}(0,T;L^2_0)$ denote the set
\begin{align*}
\left\{\left.\Phi:\left[0,T\right]\times \Omega\to\mcal L_2(Q_2^\frac{1}{2}(U^0),U^0)\right|\Phi ~\text{is predicable and} \mbf~ \mbf E\int_{0}^{T}\left\|\Phi(s)\circ Q_2^{\frac{1}{2}}\right\|^2_{\mcal L_2(U^0,U^0)}\ud s<\infty\right\}.
\end{align*}
Before giving the LDP of $\{B_T\}_{T>0}$, we first give the following proposition.
\begin{pro}\label{sec6pro1}
	Assume that $\Phi_1\in\mcal N^2_{W_1}(0,T;L^2_0)$, $ \Phi_2\in\mcal N^2_{W_2}(0,T;L^2_0)$. Then the correlation operators
	\begin{align*}
	V(t,s)=\mbf{Cor}(\Phi_1\cdot W_1(t),\Phi_2\cdot W_2(s)),\qquad t,s\in[0,T]
	\end{align*}
	are given by the formula
	\begin{align*}
	V(t,s)=\rho \mbf E\int_{0}^{t\wedge s}\Phi_2(r) Q_2^{\frac{1}{2}}Q_1^{\frac{1}{2}}(\Phi_1(r))^*\ud r.
	\end{align*}
\end{pro}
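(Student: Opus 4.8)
\textbf{Proof proposal for Proposition \ref{sec6pro1}.}

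The plan is to mimic the proof of Proposition \ref{Cor} (i.e., \cite[Proposition 4.28]{Prato1}), adapting it to the situation where the two stochastic integrals are driven by two \emph{different} but correlated $Q$-Wiener processes $W_1$ and $W_2$. First I would reduce to the case of elementary (simple) predictable processes $\Phi_1$ and $\Phi_2$: for such integrands the stochastic integrals $\Phi_1\cdot W_1(t)$ and $\Phi_2\cdot W_2(s)$ are finite sums of increments, and the general case follows by the usual approximation/isometry argument together with the continuity of the bilinear map $(\Phi_1,\Phi_2)\mapsto V(t,s)$ in the appropriate Hilbert--Schmidt norms. So assume $\Phi_j(r)=\sum_m \Phi_j^m \mathbf 1_{(t^j_m,t^j_{m+1}]}(r)$ on a common refinement of the partitions; then $\langle \Phi_1\cdot W_1(t),a\rangle_{U^0}$ and $\langle \Phi_2\cdot W_2(s),b\rangle_{U^0}$ are each sums over subintervals of terms of the form $\langle \Phi_j^m(W_j(r_{m+1})-W_j(r_m)),\cdot\rangle_{U^0}$.

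The key computation is then to evaluate, for $a,b\in U^0$,
\begin{align*}
\langle V(t,s)a,b\rangle_{U^0}=\mbf E\left\langle \Phi_1\cdot W_1(t),a\right\rangle_{U^0}\left\langle \Phi_2\cdot W_2(s),b\right\rangle_{U^0}.
\end{align*}
Because distinct subintervals give increments that are independent with mean zero, only the ``diagonal'' terms survive, and on each common subinterval $(r_m,r_{m+1}]$ of length $\delta_m$ I must compute the cross-covariance $\mbf E\big[\langle \Phi_1^m (W_1(r_{m+1})-W_1(r_m)),a\rangle\,\langle \Phi_2^m (W_2(r_{m+1})-W_2(r_m)),b\rangle\big]$. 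Expanding $W_j=\sum_k Q_j^{1/2}e_k\beta_k^{(j)}$ and using the correlation structure $\mbf E\big[(\beta_k^{(1)}(r_{m+1})-\beta_k^{(1)}(r_m))(\beta_l^{(2)}(r_{m+1})-\beta_l^{(2)}(r_m))\big]=\rho\,\delta_m\,\mathbf 1_{\{k=l\}}$, this sum collapses to $\rho\,\delta_m\sum_k\langle \Phi_1^m Q_1^{1/2}e_k,a\rangle\langle \Phi_2^m Q_2^{1/2}e_k,b\rangle=\rho\,\delta_m\,\langle \Phi_2^m Q_2^{1/2}Q_1^{1/2}(\Phi_1^m)^*b,a\rangle$, where the last step rewrites the sum over the orthonormal basis $\{e_k\}$ as the trace-type pairing $\langle \Phi_2^m Q_2^{1/2}(\Phi_1^m Q_1^{1/2})^* b,a\rangle$ and uses symmetry of $Q_1^{1/2}$. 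Summing over the subintervals up to $t\wedge s$ and recombining gives $\langle V(t,s)a,b\rangle_{U^0}=\rho\,\mbf E\int_0^{t\wedge s}\langle \Phi_2(r)Q_2^{1/2}Q_1^{1/2}(\Phi_1(r))^* b,a\rangle_{U^0}\,\ud r$, which is the claimed formula by arbitrariness of $a,b$.

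The main obstacle I anticipate is purely bookkeeping: making the basis-expansion manipulation rigorous (interchanging $\mbf E$, the infinite sum over $k$, and the time integral) and checking that the operator $\Phi_2(r)Q_2^{1/2}Q_1^{1/2}(\Phi_1(r))^*$ is genuinely trace class / well-defined under the hypotheses $\Phi_1\in\mcal N^2_{W_1}(0,T;L^2_0)$ and $\Phi_2\in\mcal N^2_{W_2}(0,T;L^2_0)$ — here one uses Cauchy--Schwarz in the Hilbert--Schmidt inner product, namely $\|\Phi_2 Q_2^{1/2}Q_1^{1/2}\Phi_1^*\|_{\mrm{tr}}\le \|\Phi_2\circ Q_2^{1/2}\|_{\mcal L_2}\|\Phi_1\circ Q_1^{1/2}\|_{\mcal L_2}$, together with the assumed square-integrability in $r$. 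Once the simple-function case is established, the extension to general $\Phi_1,\Phi_2$ is the standard density argument, exactly as in the proof of \cite[Proposition 4.28]{Prato1}, so I would only sketch it.
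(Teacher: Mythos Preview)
Your proposal is correct and follows essentially the same route as the paper's proof: reduce to elementary processes, expand the Wiener processes in the basis $\{e_k\}$, use the correlation structure $\mbf E[\delta\beta_k^{(1)}\,\delta\beta_l^{(2)}]=\rho\,\delta_m\,\mathbf 1_{\{k=l\}}$ to identify the diagonal contribution, and then pass to the limit by density. The only (cosmetic) difference is that the paper handles the vanishing of the off-diagonal terms $\mbf E S_{j,k}$ ($j\neq k$) via an explicit conditional-expectation argument on $\mcal F_{t_k}$ (needed because the coefficients $\Phi_i^m$ are themselves random and only $\mcal F_{t_m}$-measurable), whereas you invoke ``independence of increments on distinct subintervals'' directly; you should make sure that step is stated carefully, since it really uses adaptedness of $\Phi_i^m$ together with independence of future increments. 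Also note a small index swap in your final pairing: the basis computation gives $\langle \Phi_2^m Q_2^{1/2}Q_1^{1/2}(\Phi_1^m)^*a,b\rangle$, not with $a$ and $b$ exchanged.
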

\begin{proof}
	For simplicity, we take $t=s$. For each $a,b\in U^0$ and $\sigma>r\geq 0$, it follows from the independence of $\left\{\beta^{(1)}_k(t)\right\}_{t\geq0}$ and $\left\{\beta^{(2)}_j(t)\right\}_{t\geq0}$ with $k\neq j$ that  
	\begin{align}\label{sec6k1}
		&\mbf E\left\langle  W_1(\sigma)-W_1(r),a\right\rangle_{U^0}\left\langle  W_2(\sigma)-W_2(r),b\right\rangle_{U^0}\nonumber\\
		=&\mbf E\left(\sum_{k=1}^{\infty}\left(\beta^{(1)}_k(\sigma)-\beta^{(1)}_k(r)\right)\left\langle Q^{\frac{1}{2}}_1e_k,a\right\rangle_{U^0}\right)\left(\sum_{j=1}^{\infty}\left(\beta^{(2)}_j(\sigma)-\beta^{(2)}_j(r)\right)\left\langle Q^{\frac{1}{2}}_2e_j,b\right\rangle_{U^0}\right)\nonumber\\
		=&\sum_{k=1}^{\infty}\mbf E\left(\beta^{(1)}_k(\sigma)-\beta^{(1)}_k(r)\right)\left(\beta^{(2)}_k(\sigma)-\beta^{(2)}_k(r)\right)\left\langle e_k,Q^{\frac{1}{2}}_1a\right\rangle_{U^0}\left\langle e_k,Q^{\frac{1}{2}}_2b\right\rangle_{U^0}\nonumber\\
		=&\rho(\sigma-r)\left\langle Q^{\frac{1}{2}}_1a,Q^{\frac{1}{2}}_2b\right\rangle_{U^0}=\rho(\sigma-r)\left\langle Q^{\frac{1}{2}}_2Q^{\frac{1}{2}}_1a,b\right\rangle_{U^0}.
	\end{align}
	
	We first prove that  the conclusion hold in the case that both $\Phi_1$ and $\Phi_2$ are elementary processes. For this end, assume that there is a partition $0=t_0<t_1<\cdots<t_N=t$, $N\in\mbb N$, such that
	\begin{align*}
		\Phi_1(r)=\sum_{n=0}^{N-1}\Phi_1^{n}\mbf 1_{(t_n,t_{n+1}]}(r),\qquad\Phi_2(r)=\sum_{n=0}^{N-1}\Phi_2^{n}\mbf 1_{(t_n,t_{n+1}]}(r),
	\end{align*}
	where $\Phi_i^n:\Omega\to\mcal L(U^0,U^0)$ is $\mcal F_{t_n}$-measurable, and $\Phi_i^n$ takes only  a finite number of values in $L(U^0,U^0)$, $i=1,2$, $0\leq n\leq N-1$. Then we have that for each $a,b\in U^0$,
	\begin{align}\label{sec6k2}
		&\mbf E\left\langle \int_{0}^{t}\Phi_1(r)d W_1(r),a\right\rangle_{U^0}\left\langle \int_{0}^{t}\Phi_2(r)d W_2(r),b\right\rangle_{U^0}\nonumber\\
		=&\mbf E\left(\sum_{j=0}^{N-1}\left\langle \Phi_1^j(W_1(t_{j+1})-W_1(t_j)),a\right\rangle_{U^0}\right)\left(\sum_{k=0}^{N-1}\left\langle \Phi_2^k(W_2(t_{k+1})-W_2(t_k)),b\right\rangle_{U^0}\right)\nonumber\\
		=&\sum_{j=0}^{N-1}\sum_{k=0}^{N-1}\mbf E\left\langle W_1(t_{j+1})-W_1(t_j),(\Phi_1^j)^*a\right\rangle_{U^0}\left\langle W_2(t_{k+1})-W_2(t_k),(\Phi_2^j)^*b\right\rangle_{U^0}\nonumber\nonumber\\
		=:& \sum_{j,k=0}^{N-1}\mbf ES_{j,k}.
	\end{align}
	If $k\neq j$, we claim $\mbf ES_{j,k}=0$. For this end, we may assume that $k>j$ without loss of generality. Then $\left\langle W_1(t_{j+1})-W_1(t_j),(\Phi_1^j)^*a\right\rangle_{U^0}$ and $\Phi_2^k$ are $\mcal F_{t_k}$-measurable. In addition $\left(W_2(t_{k+1})-W_2(t_k)\right)$ is $\mcal F_{t_k}$-independent. It follows from the properties of conditional expectation that
	\begin{align*}
		&\mbf E(S_{j,k}|\mcal  F_{t_k})
		\nonumber\\
		=&\left\langle W_1(t_{j+1})-W_1(t_j),(\Phi_1^j)^*a\right\rangle_{U^0}\mbf E\left[\left.\left\langle W_2(t_{k+1})-W_2(t_k),(\Phi_2^j)^*b\right\rangle_{U^0}\right| \mcal F_{t_k}\right]\nonumber\\
		=&\left\langle W_1(t_{j+1})-W_1(t_j),(\Phi_1^j)^*a\right\rangle_{U^0} \Big(\mbf E\left.\left\langle W_2(t_{k+1})-W_2(t_k),u\right\rangle_{U^0} \Big)\right|_{u=(\Phi_2^j)^*b}\nonumber\\
		=&0,
	\end{align*}
which leads to
	\begin{align}\label{sec6k3}
	\mbf ES_{j,k}=\mbf E\left(\mbf E(S_{j,k}|\mcal  F_{t_k})\right)=0,\qquad k\neq j.
	\end{align}
	Similarly, using \eqref{sec6k1} we obtain
	\begin{align*}
	 &\mbf E(S_{k,k}|\mcal   F_{t_k})\nonumber\\
	=&\left.\Big(\mbf E\left\langle W_1(t_{k+1})-W_1(t_k),u\right\rangle_{U^0} \left\langle W_2(t_{k+1})-W_2(t_k),v\right\rangle_{U^0}\Big)\right|_{u=(\Phi_1^k)^*a,\,v=(\Phi_2^j)^*b}\nonumber\\
	=&\rho(t_{k+1}-t_k)\left.
	\left\langle Q_2^{\frac{1}{2}}Q_1^{\frac{1}{2}}u,v\right\rangle_{U^0}\right|_{u=(\Phi_1^k)^*a,\,v=(\Phi_2^k)^*b}\nonumber\\
	=&\rho(t_{k+1}-t_k)\left\langle \Phi_2^k Q_2^{\frac{1}{2}}Q_1^{\frac{1}{2}}(\Phi_1^k)^*a,b\right\rangle_{U^0}.
	\end{align*}
	Hence, it holds that
	\begin{align}\label{sec6k4}
		\mbf ES_{k,k}=\rho(t_{k+1}-t_k)\mbf E\left\langle \Phi_2^k Q_2^{\frac{1}{2}}Q_1^{\frac{1}{2}}(\Phi_1^k)^*a,b\right\rangle_{U^0}.
	\end{align}
	Substituting \eqref{sec6k3} and \eqref{sec6k4} into \eqref{sec6k2} yields
	\begin{align*}
		&\mbf E\left\langle \int_{0}^{t}\Phi_1(r)d W_1(r),a\right\rangle_{U^0}\left\langle \int_{0}^{t}\Phi_2(r)d W_2(r),b\right\rangle_{U^0}\nonumber\\
		=&\rho\sum_{k=0}^{N-1}(t_{k+1}-t_k)\mbf E\left\langle \Phi_2^k Q_2^{\frac{1}{2}}Q_1^{\frac{1}{2}}(\Phi_1^k)^*a,b\right\rangle_{U^0} \nonumber\\
		=&\rho\mbf E\left\langle\int_{0}^{t}\Phi_2(r) Q_2^{\frac{1}{2}}Q_1^{\frac{1}{2}}(\Phi_1(r))^*a,b\right\rangle_{U^0},
	\end{align*}
	which proves the conclusion when $\Phi_i$, $i=1,2$,  are elementary processes.
	
	If $\Phi_i$, $i=1,2$, are general processes, one can take elementary process $\Phi_i^{(n)}$ such that
	\begin{align*}
	\lim_{n\to \infty}\mbf E\int_{0}^{T}\left\|\left(\Phi_i^{(n)}(s)-\Phi_i(s)\right)\circ Q_i^{\frac{1}{2}}\right\|^2_{\mcal L_2(U^0,U^0)}\ud s=0,\qquad i=1,2.
	\end{align*} 
	Then by a standard argument of approximation, one can prove that the conclusion holds for any $\Phi_1\in\mcal N^2_{W_1}(0,T;L^2_0)$, $ \Phi_2\in\mcal N^2_{W_2}(0,T;L^2_0)$ (see also the proof of \cite[Proposition 4.28]{Prato1}).
\end{proof}

Similar to the case of real-valued noises, we assume that $Q_i^{\frac{1}{2}}\in\mcal L_2(U^0,U^1)$, $i=1,2$. Then, we have the following results.
\begin{theo}\label{sec6tho6.2}
	Under the above conditions,  $\{B_T\}_{T>0}$ satisfies an LDP on $H^0$ with the good rate function
	\begin{align*}
	I(x)=\begin{cases}
	\frac{1}{\alpha^2}\left\|\widetilde{Q}^{-\frac{1}{2}}x\right\|_{H^0}^2, \qquad &\text{if}~x\in \widetilde{Q}^{\frac{1}{2}}(H^0), \\
	+\infty, &\text{otherwise},
	\end{cases}
	\end{align*}
	where $\widetilde{Q}=Q_1+Q_2$.
\end{theo}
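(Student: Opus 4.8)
The plan is to mimic the proof of Theorem \ref{LDP for BT}, using the abstract G\"artner--Ellis theorem (Theorem \ref{GE}), but with the covariance computations adapted to the complex-valued driving noise $W=W_1+\bm{i}W_2$. First I would write the mild solution as
\begin{align*}
u(t)=S(t)u_0+\bm{i}\alpha\int_0^t S(t-s)\,\ud W_1(s)-\alpha\int_0^t S(t-s)\,\ud W_2(s),
\end{align*}
and, expanding $S(t-s)=\cos((t-s)\Delta)+\bm{i}\sin((t-s)\Delta)$, separate real and imaginary parts to obtain an expression of the form $u(t)=S(t)u_0+(\text{real Gaussian}) +\bm{i}(\text{real Gaussian})$ whose ingredients are stochastic integrals against $W_1$ and $W_2$ with operator-valued integrands built from $\cos((t-s)\Delta)$ and $\sin((t-s)\Delta)$. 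Then, for each $\lambda\in H^0$, the quantity $\langle u(t),\lambda\rangle_{\mbb R}$ is Gaussian, so $\mbf E e^{\langle u(t),\lambda\rangle_{\mbb R}}=\exp\{\mbf E\langle u(t),\lambda\rangle_{\mbb R}+\tfrac12\mbf{Var}\langle u(t),\lambda\rangle_{\mbb R}\}$, exactly as in \eqref{Lambda}.

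The key computation is the variance. Using Proposition \ref{Cor} for the $W_1$-part and the $W_2$-part separately, and the new Proposition \ref{sec6pro1} for the cross term between the $W_1$- and $W_2$-integrals, I would compute $\mbf{Var}\langle u(t),\lambda\rangle_{\mbb R}$ as a sum of: (i) terms involving $\int_0^t\sin^2((t-s)\Delta)Q_1\,\ud s$ and $\int_0^t\cos^2((t-s)\Delta)Q_1\,\ud s$ and their cross term, exactly as in \eqref{varut,h1} but with $Q$ replaced by $Q_1$; (ii) the analogous terms with $Q_2$; and (iii) cross terms with a factor $\rho$ and integrands like $\int_0^t\sin((t-s)\Delta)\cos((t-s)\Delta)Q_2^{1/2}Q_1^{1/2}\,\ud s$, $\int_0^t\sin^2((t-s)\Delta)Q_2^{1/2}Q_1^{1/2}\,\ud s$, etc. Then I divide by $T$ and let $T\to\infty$. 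The crucial observation is that every ``oscillatory'' piece carries a bounded operator factor (norm $\le 1$ for $\cos,\sin$; $\le 1$ for $\Delta^{-1}$) together with a $\Delta^{-1}$ coming from integrating $\cos(2(t-s)\Delta)$ or $\sin(2(t-s)\Delta)$, so each such piece is $O(1)$ and contributes $0$ to $\Lambda$. The only surviving terms are the ones proportional to $t$: for the $Q_i$ diagonal parts one gets $\tfrac{t\alpha^2}{4}(\langle Q_i\Re\lambda,\Re\lambda\rangle_{\mbb R}+\langle Q_i\Im\lambda,\Im\lambda\rangle_{\mbb R})$ after the limit; the subtle point is that the cross term between $W_1$ and $W_2$, after using $\int_0^t\sin((t-s)\Delta)\cos((t-s)\Delta)\,\ud s=O(1)$ and $\int_0^t\sin^2=\tfrac t2+O(1)$, $\int_0^t\cos^2=\tfrac t2+O(1)$, produces a term proportional to $\rho t\big(-\langle Q_2^{1/2}Q_1^{1/2}\Re\lambda,\Im\lambda\rangle_{\mbb R}+\langle Q_2^{1/2}Q_1^{1/2}\Im\lambda,\Re\lambda\rangle_{\mbb R}\big)$ from the diagonal $\tfrac t2$-parts. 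By symmetry of $Q_1^{1/2}$ and $Q_2^{1/2}$ and the commutativity $\Delta Q_i=Q_i\Delta$ (which also gives $Q_1^{1/2}Q_2^{1/2}=Q_2^{1/2}Q_1^{1/2}$ since all operators are diagonal in $\{e_k\}$), this cross term is antisymmetric in $(\Re\lambda,\Im\lambda)$ and therefore vanishes. Hence
\begin{align*}
\Lambda(\lambda')=\frac{\alpha^2}{4}\big(\langle (Q_1+Q_2)\Re\lambda,\Re\lambda\rangle_{\mbb R}+\langle (Q_1+Q_2)\Im\lambda,\Im\lambda\rangle_{\mbb R}\big)=\frac{\alpha^2}{4}\big\|\widetilde Q^{1/2}\lambda\big\|_{H^0}^2,
\end{align*}
with $\widetilde Q=Q_1+Q_2$.

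Next I would verify exponential tightness: since $\widetilde Q^{1/2}=(Q_1+Q_2)^{1/2}$ and $Q_i^{1/2}\in\mcal L_2(U^0,U^1)$ imply $\widetilde Q$ has finite trace on $U^1$, the argument of Step 2 of Theorem \ref{LDP for BT} carries over verbatim with the compact sets $K_L=\{f\in H^1:\|f\|_{H^1}\le L\}$; one just replaces the Gaussian laws of $W_{\sin},W_{\cos}$ by their analogues driven by $W_1,W_2$, bounds the eigenvalues of the rescaled covariance operators by those of $\widetilde Q$ plus $O(1/T)$ corrections, and applies the Fernique-type Proposition \ref{Fernique} for $0<\varepsilon<\tfrac{1}{2\lambda_1(\widetilde Q)}$. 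Finally, $\Lambda(\lambda')=\tfrac{\alpha^2}{4}\|\lambda'\|_{L^2(\mu)}^2$ with $\mu=\mcal N(0,\widetilde Q)$ is Fr\'echet differentiable, so Theorem \ref{GE} applies, and the RKHS computation of Step 3 of Theorem \ref{LDP for BT} goes through word for word with $Q$ replaced by $\widetilde Q$, yielding $\Lambda^*(x)=\tfrac{1}{\alpha^2}\|\widetilde Q^{-1/2}x\|_{H^0}^2$ on $\widetilde Q^{1/2}(H^0)$ and $+\infty$ otherwise. The main obstacle is the bookkeeping in the variance computation — in particular carefully checking that the $\rho$-dependent cross terms between $W_1$ and $W_2$ either are $O(1)$ (the oscillatory ones) or cancel by antisymmetry (the $O(t)$ one), so that $\rho$ does not appear in $\Lambda$ and hence not in the rate function; once that is settled, everything else is a routine adaptation of the real-noise proof.
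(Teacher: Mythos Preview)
Your proposal is correct and follows essentially the same route as the paper's proof: compute $\mbf{Var}\langle u(t),\lambda\rangle_{\mbb R}$ using Proposition \ref{Cor} for the $W_i$-self-covariances and Proposition \ref{sec6pro1} for the $W_1$--$W_2$ cross-covariances, observe that the $\rho$-dependent linear-in-$t$ pieces cancel while all remaining terms are $O(1)$, obtain $\Lambda(\lambda')=\tfrac{\alpha^2}{4}\|\widetilde Q^{1/2}\lambda\|_{H^0}^2$, and then invoke the exponential-tightness and RKHS arguments of Theorem \ref{LDP for BT} verbatim with $Q$ replaced by $\widetilde Q$. Your explicit explanation of the antisymmetry mechanism behind the cross-term cancellation is in fact more detailed than the paper, which simply absorbs all non-leading terms into a bounded remainder $\widetilde R$.
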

\begin{proof}
	This proof is analogous to that of Theorem \ref{LDP for BT}. Hence we only give the sketch of proof.
	The main difference lies in the computation of the variance $\mbf{Var}\left\langle u(t),h\right\rangle_\mathbb{R}$. In fact, it holds that
	\begin{align*}
		u(t)=&S(t)u_0-\alpha\int_{0}^{t}\sin((t-s)\Delta)\ud W_1(s)-\alpha\int_{0}^{t}\cos((t-s)\Delta)\ud W_2(s)\nonumber\\
		&+\bm{i}\alpha\int_{0}^{t}\cos((t-s)\Delta)\ud W_1(s)-\bm{i}\alpha\int_{0}^{t}\sin((t-s)\Delta)\ud W_2(s).
	\end{align*}
	Hence, for each $h\in H^0$,
	\begin{align*}
	\left\langle u(t),h\right\rangle_\mathbb{R}=&\left\langle S(t)u_0,h\right\rangle_\mathbb{R}-\alpha	\left\langle \int_{0}^{t}\sin((t-s)\Delta)\ud W_1(s),\Re h\right\rangle_\mathbb{R}-\alpha\left\langle \int_{0}^{t}\cos((t-s)\Delta)\ud W_2(s),\Re h\right\rangle_\mathbb{R}\nonumber\\
	&+\alpha\left\langle \int_{0}^{t}\cos((t-s)\Delta)\ud W_1(s),\Im h\right\rangle_\mathbb{R}-\alpha\left\langle \int_{0}^{t}\sin((t-s)\Delta)\ud W_2(s),\Im h\right\rangle_\mathbb{R}.
	\end{align*}
	Using Proposition \ref{sec6pro1}, one has
	\begin{align*}
		&\mbf{Var}\left\langle u(t),h\right\rangle_\mathbb{R}\nonumber\\
		=&\alpha^2\left\langle \int_{0}^{t}\sin^2((t-s)\Delta)Q_1\ud s\Re h,\Re h\right\rangle_\mathbb{R}+\alpha^2\left\langle \int_{0}^{t}\cos^2((t-s)\Delta)Q_2\ud s\Re h,\Re h\right\rangle_\mathbb{R}\nonumber\\
		&+\alpha^2\left\langle \int_{0}^{t}\cos^2((t-s)\Delta)Q_1\ud s\Im h,\Im h\right\rangle_\mathbb{R}+\alpha^2\left\langle \int_{0}^{t}\sin^2((t-s)\Delta)Q_2\ud s\Im h,\Im h\right\rangle_\mathbb{R}\nonumber\\
		&+2\alpha^2\rho\left\langle \int_{0}^{t}\sin((t-s)\Delta)\cos((t-s)\Delta)Q_2^{\frac{1}{2}}Q_1^{\frac{1}{2}}\ud s\Re h,\Re h\right\rangle_\mathbb{R}\nonumber\\
		&-2\alpha^2\left\langle \int_{0}^{t}\sin((t-s)\Delta)\cos((t-s)\Delta)Q_1\ud s\Re h,\Im h\right\rangle_\mathbb{R}\nonumber\\
		&+2\alpha^2\rho\left\langle \int_{0}^{t}\sin^2((t-s)\Delta) Q_2^{\frac{1}{2}}Q_1^{\frac{1}{2}}\ud s\Re h,\Im h\right\rangle_\mathbb{R}-2\alpha^2\rho\left\langle \int_{0}^{t}\cos^2((t-s)\Delta) Q_2^{\frac{1}{2}}Q_1^{\frac{1}{2}}\ud s\Im h,\Re h\right\rangle_\mathbb{R}\nonumber\\
		&+2\alpha^2\left\langle \int_{0}^{t}\sin((t-s)\Delta)\cos((t-s)\Delta)Q_2\ud s\Re h,\Im h\right\rangle_\mathbb{R}\nonumber\\
		&-2\alpha^2\rho\left\langle \int_{0}^{t}\sin((t-s)\Delta)\cos((t-s)\Delta)Q_2^{\frac{1}{2}}Q_1^{\frac{1}{2}}\ud s\Im h,\Im h\right\rangle_\mathbb{R}\nonumber\\
		=&\frac{t\alpha^2}{2}\left(\left\langle \widetilde{Q}\Re h,\Re h\right\rangle_\mathbb{R}+\left\langle \widetilde{Q}\Im h,\Im h\right\rangle_\mathbb{R}\right)+\widetilde{R},\nonumber\\
		=&\frac{t\alpha^2}{2}\left\|\widetilde{Q}^{\frac{1}{2}}\lambda\right\|_{H^0}^2+\widetilde{R}, 
	\end{align*}
	where $|\widetilde R|\leq K(Q_1,Q_2,\Delta)$ with $K(Q_1,Q_2,\Delta)$ independent of $t$.  Similar to the proof of Theorem \ref{LDP for BT}, we finish the proof by means of the abstract G\"artner--Ellis theorem.
\end{proof}
\begin{rem}
	In Theorem \ref{sec6tho6.2}, we give the LDP of $\{B_T\}_{T>0}$. Similarly, the LDP for $\{B^M_N\}_{M,N\in\mbb N}$ of numerical method can also be obtained in the case of complex-valued noises. 
\end{rem}

\section{Future work}\label{Sec7}

The calculation of large deviations rate functions is an interesting and important problem.  One of the  common techniques of approximating the large deviations rate functions is by the Legendre transform of the approximated logarithmic moment generating functions which may be obtained by, e.g., Monte--Carlo methods  provided the prior distributions  of observables are known (\cite{ratefun}). 
For a stochastic system, the
prior distributions of the considered observables are generally unknown, the approximated logarithmic moment generating functions can be obtained by the combination of numerical discretizations and Monte--Carlo methods. Do all of numerical discretizations work? Theorem \ref{sec5tho5.5} of this paper shows that 
the full discretizations  $\{u^M_n\}_{M,n\in\mbb N}$, based on the temporal symplectic discretizations and the spatial spectral Galerkin approximation, can weakly asymptotically preserve the LDP of $\{B_T\}_{T>0}$.  
This result indicates that for an observable associated with a stochastic Hamiltonian partial differential equation, the symplectic discretization is a prior choice. What is the convergence between the rate functions and their numerical approximations? How to combine other techniques, e.g.,  the adaptive sampling algorithm (see \cite{LDPapp18}) and multi-level Monte--Carlo methods, to  improve the computational efficiency?

\bibliographystyle{plain}
\bibliography{mybibfile}

\end{document}